\MyNewMathOperator{\projectiveSpace}	{command={\mathbb{P}}, display={$\projectiveSpace(V)$}, description={Projective space of a vector space $V$}}
\let\img\relax
\MyNewMathOperator{\img}			{operator={Range}, display={$\img f$}, description={Image of the map $f$}}
\newcommand{\barpartial}{{\bar\difp}}
\newcommand{\bfC}{{\mathbb C}}
\newcommand{\bfR}{{\mathbb R}}
\newcommand{\bfP}{{\mathbf P}}
\newcommand{\barj}{{\overline j}}
\newcommand{\bark}{{\overline k}}
\newcommand{\barl}{{\overline \ell}}
\newcommand{\envalias}[2]{\newenvironment{#1}{\begin{#2}}{\end{#2}}}
\begin{document}

\MakeTitle

\tableofcontents

\listoftodos

\section{Introduction}
The fruitful interplay between Hamiltonian systems with symmetry 
and complex geometry is of paramount importance in symplectic 
geometry. A particularly powerful tool in connecting these areas 
is the Kempf--Ness theorem \parencite{KempfNess1979} which 
describes the equivalence between the notions of quotient in 
symplectic and algebraic geometry. The theorem states that 
the symplectic quotient of a Hamiltonian action by a compact 
Lie group is isomorphic to the GIT quotient of the associated 
action of the complexified group. 

While being landmarks in their own right, these rigorous results 
about finite-dimensional systems expand their full strength as a 
conceptual framework for the study of geometric partial 
differential equations. Often, a difficult system of PDEs contains
non-evolutionary equations that can be formulated as a level-set 
constraint of a momentum map associated with an infinite-dimensional 
Lie group acting on an infinite-dimensional symplectic manifold (this
is the case in, \eg, electromagnetism, Yang--Mills theory, and, 
in some sense, general relativity). When this is the case, 
the finite-dimensional techniques surrounding the Kempf--Ness 
theorem serve as a blueprint to come up with fundamental 
conjectures about obstructions and stability of solutions to 
the original system of PDEs. Examples include the work of 
Atiyah and Bott \parencite{AtiyahBott1983} on Yang--Mills 
connections on a Riemann surface, the Donaldson--Uhlenbeck--Yau 
correspondence \parencite{Donaldson1985, UhlenbeckYau1986} 
relating stable holomorphic vector bundles and Hermitian 
Yang--Mills connections, the Kobayashi--Hitchin correspondence 
\parencite{Kobayashi1982,Hitchin1979} and the recent 
resolution of the Yau--Tian--Donaldson conjecture \parencite{ChenDonaldsonSun2015,Tian2015}.
All these examples are quite different in nature, but they 
all share the same abstract framework grounded in 
infinite-dimensional symplectic geometry.

In view of the wide success of this \emph{conceptual} picture, 
it is perhaps astonishing that no \emph{rigorous} 
infinite-dimensional framework is available yet.
In this paper, we start the development of a general 
theory of the Kempf--Ness theorem in infinite dimensions 
with the goal to encompass the above-mentioned examples 
as specific cases. Passing to infinite dimensions, however, 
is quite challenging because the symmetry group often does
not admit a complexification. For example, the diffeomorphism 
group does not complexify. This major obstacle makes it 
unclear what the correct notion of stability and what 
the GIT quotient should be. 

In this paper, we propose a solution to these problems using 
the framework of Cartan bundles. Our starting point is the observation 
that, in finite dimensions, the Maurer--Cartan form essentially 
determines the Lie group. Roughly speaking, a simply connected 
manifold endowed with a Lie-algebra valued \( 1 \)-form satisfying 
the Maurer--Cartan equation (and possessing a few other natural 
properties) is locally a Lie group, see 
\parencite[Theorem~8.7]{Sharpe1997} for the precise statement.
The Maurer--Cartan form is hence a fundamental object in the 
theory of Lie groups and this viewpoint is the basis of 
its extension to Cartan bundles. The framework of Cartan bundles 
translates, without much effort, to infinite dimensions. Only the 
(local) integration of a Cartan geometry to a Lie group is not 
available in infinite dimensions, although various partial 
integration results related to Cartan forms have recently been 
established in infinite dimensions, see 
\parencite{MichorMichor2024,GloecknerNeeb2013,Neeb2006}.
Our main idea is to replace the complexification of the symmetry 
group in the GIT theory by an appropriate Cartan bundle.

The language of Cartan bundles allows us to rigorously 
define and study objects that are usually connected to 
the complexified action and that are essential for the 
Kempf--Ness theorem. For example, we define the notion of 
a Cartan model for the orbit through a point in a 
symplectic manifold endowed with a Hamiltonian action 
by a Lie group. This is just a Cartan version for the 
important notion of the orbit of the complexified action 
in the finite-dimensional case. Moreover, we introduce 
the Kempf--Ness function and establish its convexity 
properties along geodesics, see 
\cref{prop:cartan:kempfNess,prop:cartan:kempfNess:positive}. 
We also define a generalized Futaki character as a 
character on the stabilizer of the Cartan action. We show 
that the generalized Futaki character is constant in 
a certain sense and that it obstructs the existence of 
zeros of the momentum map, see \cref{prop:cartan:constFutaki}.
An interesting feature of our theory is that it frees the 
GIT theory from its embedding in complex geometry and 
opens up the possibility to study the Kempf--Ness theorem 
in a more general context. For example, in 
\cref{ex:cartan:informationGeometry} we show that our 
theory can be applied to information geometry, where the 
group of diffeomorphisms acts on the space of probability 
measures --- a setting usually not associated with complex 
geometry.

As we discuss in the second part of the paper, our general 
results have immediate applications to various geometric 
PDEs. For example, \textcite{Fujiki1992,Donaldson1997} 
remarked that the existence problem of a constant scalar 
curvature K\"ahler (cscK for short) metric is equivalent to 
finding the zeros of a momentum map associated with the 
action of the Hamiltonian diffeomorphism group on an 
infinite-dimensional space. Moreover, \textcite{Donaldson1999a} 
also argued that a certain principal bundle 
\( \SectionSpaceAbb{P} \) over the space of K\"ahler metrics 
is a natural candidate for a complexification of the Hamiltonian 
diffeomorphism group. We support this claim by showing that 
the bundle \( \SectionSpaceAbb{P} \) carries a natural flat 
Cartan connection; so (locally) it would be a Lie group if 
not for the infinite-dimensional nature of the problem.
(This observation was actually the starting point of our 
investigation.) Applied to the cscK problem, our general 
theory recovers the Mabuchi K-energy as the Kempf--Ness function.
Convexity of the Mabuchi K-energy along geodesics is a 
well-known result, see \parencite{Mabuchi1987a}.
Moreover, the generalized Futaki character of the general 
theory yields the classical Futaki character 
\parencite{Futaki1983}, which obstructs the existence 
of constant scalar curvature K\"ahler metrics.
We emphasize that the novelty is not in the results 
themselves, or that they \emph{formally} fit in a GIT 
setting, but in the conceptual and \emph{rigorous 
infinite-dimensional framework} that allows us to 
derive them in a unified way with minimal assumptions.
To illustrate the power of our theory, we also discuss 
applications to various other geometric PDEs, where we 
recover known results from the literature indicated below as well as obtain new conclusions.
\begin{enumerate}
	\item Perturbation of the scalar curvature by Chern forms: \parencite{Bando2006,Futaki2006,Futaki2008}. 
	\item Z-critical K\"ahler metrics: \parencite{Dervan2023,DervanHallam2023}.
	\item Symplectic connections with applications to deformation quantization: \parencite{Fuente-Gravy2016,FutakiOno2018,FutakiFuenteGravy2019}.
	\item Hermitian Yang--Mills connections: \parencite{Donaldson1985,UhlenbeckYau1986}.
	\item Z-critical Yang--Mills connections: \parencite{DervanMcCarthySektnan2020,CollinsYau2021}. 
\end{enumerate}
In each case, one first gives a momentum map interpretation of the PDE under study and then constructs an appropriate Cartan bundle.
Our general theory then provides a package of results, such as the Kempf--Ness function, the generalized Futaki character, and the uniqueness of solutions modulo automorphisms.
Some applications of our theory to concrete problems are often technically challenging and require a careful analysis (see, \eg, the recent study of the LYZ equation in \parencite{CollinsYau2021}).

An overview of our results and their implications in various geometric settings is  given in \cref{tab:applications}.
This gives only a brief summary. It is not intended to provide a complete overview of the vast literature.
More detailed discussions of the applications are given in \cref{sec:kaehler,sec:gauge}.

\paragraph*{Acknowledgements}
The authors would like to thank Klas Modin and Cornelia Vizman for helpful discussions.

\begin{landscape}
	\thispagestyle{plain}
	\begin{table}
	\vspace*{-3em}
	\hspace*{-5em}
	\centering
	\setlength{\tabcolsep}{2ex} 
	\renewcommand{\arraystretch}{2.4} 
	\begin{tabular}{p{4cm}|p{3cm}p{3cm}p{3cm}p{3cm}p{3cm}p{3cm}}
		\toprule
		General result
			& cscK metrics
			& perturbed cscK metrics
			& Z-critical K\"ahler metrics
			& Symplectic connections
			& Hermitian YM connections
			& Z-critical YM connections
			\tabularnewline
		\midrule

		Generalized Futaki character \newline \Cref{eq:futaki:generalizedFutakiInvariant}
			& Futaki character \parencite{Futaki1983}
			& \parencite{Futaki2006}
			& \parencite{Dervan2023}
			& \parencite{Fuente-Gravy2016}
			& \parencite{FutakiMorita1985}
			& \parencite{CollinsYau2021} though not explicit
			\tabularnewline
		
	
		Kempf--Ness function \newline \cref{prop:cartan:kempfNess}
			& Mabuchi K-energy \parencite{Mabuchi1987}
			& New
			& \parencite{Dervan2023}
			& \parencite{FutakiFuenteGravy2019}
			& Donaldson functional \parencite{Donaldson1985,UhlenbeckYau1986}
			& \parencite{CollinsYau2021}
			\tabularnewline
	

		Stability* \newline \cref{defn:cartan:stability}
			& K-stability \newline \parencite{Tian1997,Donaldson2002}
			& New
			& \parencite{Dervan2023}
			& \parencite{FutakiFuenteGravy2019}
			& Slope stability of vector bundles
			& \parencite{CollinsYau2021,DervanMcCarthySektnan2020}
			\tabularnewline

		Uniqueness of zeros of the momentum map in a complex orbit up to the action of the stabilizer \newline \cref{auto}* 
			& \parencite{BermanBerndtsson2017}
			& New
			& New
			& \parencite{FutakiFuenteGravy2019}
			& Essential uniqueness of Hermitian YM connections \parencite{Donaldson1985} 
			& \parencite{CollinsYau2021} though not explicit
			\tabularnewline
	
		Extremal elements \newline \Cref{eq:cartan:extremalElements:extremalCondition}
			& extremal K\"ahler vector field \parencite{Futaki1988,FutakiMabuchi1995}
			& \parencite{Futaki2008}
			& \parencite{Dervan2023}
			& \parencite{FutakiOno2018}
			& New
			& New
			\tabularnewline
		\bottomrule
	\end{tabular}
	\caption{Applications of the general theory to various geometric problems. \\
	{
		\footnotesize
		*: In our general theory, stability and uniqueness of zeros of the momentum map are taken relative to \emph{smooth} geodesic rays. In applications, existence and uniqueness of sufficiently smooth geodesic rays is often a difficult problem and it may be advantageous to reformulate it in terms of algebraic-geometric objects such as test configurations.
	}}
	\label{tab:applications}
	\end{table}
\end{landscape}

\section{Infinite-dimensional GIT}
\label{sec:general}
\subsection{Cartan bundles}
\Textcite{Futaki1983} introduced a character on the Lie algebra of 
holomorphic vector fields that is constant as a function of the 
K\"ahler form in a given K\"ahler class and that obstructs the 
existence of a constant scalar curvature K\"ahler form in that 
class. One of our goals is to generalize this philosophy and, in 
the general setting of a Hamiltonian action, construct a character 
on the stabilizer of the complexified action that is constant in 
a certain sense and that obstructs the existence of zeros of the 
momentum map.

Let \( (M, \omega) \) be a symplectic manifold endowed with a Hamiltonian 
action by a Lie group \( G \). Let \( m \in M \).
Assume, for the moment, that \( G \) has a complexification \( G^c \)
 acting on \(M\).
Let \(\mathfrak{g}\) be the Lie algebra of \(G\), \(\mathfrak{g}_\mathbb{C}\)
its complexification, the Lie algebra  of \( G^c \). 
Morally speaking, for every \( \zeta \) in the 
stabilizer \( (\LieA{g}_\C)_m \), we associate a \( G \)-invariant 
map \( \tilde{F}_\zeta: G^c \to \R \) and show that the induced map 
on the quotient \( F_\zeta: G \backslash G^c \to \R \)
is constant. The map \( (\LieA{g}_\C)_m \ni 
\zeta \mapsto F_\zeta \in \sFunctionSpace(G \backslash G^c, \R) \) is then the 
\textit{generalized Futaki character}. As will be discussed in 
\cref{sec:kaehler}, this definition recovers the classical Futaki 
character by identifying \( G \backslash  G^c \) with the space 
of K\"ahler forms in a given K\"ahler class. The problem with 
the picture described above is that the group of interest \( G \) is the Lie group
of symplectomorphisms, which  may 
not admit a complexification \( G^c \).

Hence, the main issues we have to face is to define the 
generalized Futaki character in the absence of a complexification
\( G^c \) of the Lie group \( G \). More precisely, 
we need an appropriate replacement for the bundle 
\( G^c \to G \backslash  G^c \). This is achieved using the 
theory of Cartan bundles, which are generalizations of 
Klein bundles \( G \to H \backslash  G \), where \(H\) is a 
closed subgroup of \(G\). The main merit, 
from our point of view, is that the Cartan
theory generalizes without much effort to infinite dimensions, 
while the (local) integration of a Cartan geometry to a Klein 
geometry is not available in infinite dimensions. As we will 
see, the framework of Cartan connections also provides a 
natural formalization of the constructions of \textcite{Donaldson1999a} 
in the example of K\"ahler geometry.

Recall from \parencite{Sharpe1997} that a \textit{Klein pair} 
\( (\LieA{a}, \LieA{g}) \) consists of a Lie algebra \( \LieA{a} \) 
and a Lie subalgebra \( \LieA{g} \) of \( \LieA{a} \).
Moreover, we suppose that there exists a Lie group \( G \) that 
integrates \( \LieA{g} \) and that there exists a representation 
\( \AdAction \) of \( G \) on \( \LieA{a} \) that restricts to 
the adjoint representation of \( G \) on \( \LieA{g} \).
In infinite dimensions, we assume that \( \LieA{g} \) is closed and, 
as a topological vector space, has a closed
complement in \( \LieA{a} \). We will refer to this setting by 
saying that \( (\LieA{a}, \LieA{g}) \) is a \textit{Klein pair with 
group} \( G \). Crucially, no integrability assumption is made 
for \( \LieA{a} \). A \emphDef{Cartan geometry} modeled on 
the Klein pair \( (\LieA{a}, \LieA{g}) \) with group \( G \) on 
a manifold \( B \) is a principal \( G \)-bundle \( \pi: P \to B \) 
together with a \( G \)-equivariant \( 1 \)-form 
\( \theta \in \DiffFormSpace^1(P, \LieA{a}) \) that 
induces an isomorphism \( \theta_p: \TBundle_p P \to \LieA{a} \) 
for every \( p \in P \) and that satisfies \( \theta(\xi^*) = \xi \) 
for all \( \xi \in \LieA{g} \).
Here, \( \xi^* \in  \VectorFieldSpace(P)\) denotes
the infinitesimal generator, or fundamental vector field, defined by
\(\xi \). The \( 1 \)-form \( \theta \) 
is called a \emphDef{Cartan connection}. Contrary to the usual 
treatment in the literature, it will be convenient to work with 
a left principal \( G \)-bundle \( \pi: P \to B \) instead of a right 
principal bundle, so that the equivariance property of \( \theta \) 
reads \( \theta_{g \cdot p}(g \ldot X) = \AdAction_{g} \theta_p(X) \) 
for all \( p \in P \), \( X \in \TBundle_p P \), and \( g \in G \).
The \emphDef{curvature} of \( \theta \) is the 
\( 2 \)-form\footnotemark{} \( \Omega \defeq \dif \theta - \frac{1}{2} 
\wedgeLie{\theta}{\theta} \in \DiffFormSpace^2(P, \LieA{a}) \).
\footnotetext{Note the sign in front of the quadratic term, which 
is the opposite of the usual convention for \emph{right} principal 
bundles. If \( \alpha \) is an \( \LieA{a} \)-valued 
\(p\)-form and \( \beta \) an \( \LieA{a} \)-valued \(q\)-form on 
\(P\), then \( \wedgeLie{\alpha}{\beta} \) denotes the  
\(\LieA{a} \)-valued  \( (p+q) \)-form on \( P \) obtained as 
in the usual theory of \( \R \)-valued forms, except that 
multiplication of real numbers is replaced by the Lie bracket.}
A Cartan connection \( \theta \) is said to be \emphDef{torsion-free} 
if \( \Omega \) takes values in \( \LieA{g} \). A standard result in 
the theory of Cartan connections is that \( \theta \) induces a 
bundle isomorphism \( \TBundle B \to P \times_G (\LieA{a} \slash \LieA{g}) \); 
see \parencite[Theorem~3.15]{Sharpe1997}. The reader should keep the example
\( A = G^c \to G \backslash G^c = B \), with \( \theta \) being the 
Maurer--Cartan form, in mind. In fact, every Cartan geometry with
vanishing curvature is locally isomorphic to a homogenous space
in finite dimensions; see \parencite[Theorem~5.1]{Sharpe1997}.
In infinite dimensions, this may no longer hold, as \(\mathfrak{a} \) 
may not integrate to a suitable Lie group \(A\).

\begin{defn}
\label{def:cartan:orbitModel}
Let \( M \) be a manifold and \( G \) a Lie 
group acting on \( M \). 	
A Cartan geometry \( (P, \theta) \) modeled on the Klein pair 
\( (\LieA{a}, \LieA{g}) \) with group \( G \)
 is said to be an \emphDef{orbit} through the point \( m \in M \) 
if the following holds:
\begin{thmenumerate}
\item There exists a smooth \( G \)-equivariant map \( \chi: P \to M \) 
such that \( m \) is in the image of \( \chi \).
\item There exists a Lie subalgebra \( \LieA{a}_m \subseteq \LieA{a} \) 
and a right Lie algebra action of \( \LieA{a}_m \) 
on \( P \), denoted by \( \zeta^*_p = p \ldot \zeta \)
for 
\( \zeta \in \LieA{a}_m \) and \( p \in P \), which commutes 
with the \( G \)-action and which satisfies \( \tangent \chi(\zeta^*) = 0 \).
\qedhere
\end{thmenumerate}
\end{defn}
Given a Cartan model \( (P, \theta) \) which is an orbit through \( m \),
 we get an induced fiberwise linear bundle map 
 \( \rho: \LieA{a} \times P \to \chi^* \TBundle M \) over \( P \) 
 by setting
\begin{equation}
	\label{eq:cartan:infAction}
	\rho(\xi, p) \equiv \rho_p(\xi) \defeq 
	\tangent_p \chi \bigl( \theta^{-1}_p(\xi) \bigr) \in 
	\TBundle_{\chi(p)} M.
\end{equation}
Thinking of \( \rho \) as the infinitesimal action of \( \LieA{a} \) 
on \( M \), we slightly abuse notation and write 
\( \rho(\xi, p) \equiv \xi \ldot \chi(p) \) although 
\( \rho(\xi, p) \) might really depend on \( p \) and not 
only on \( \chi(p) \). Note that
\begin{equation}
	\bigl(\AdAction_g \xi\bigr) \ldot \chi(g \cdot p) = 
	\rho\bigl(\AdAction_g \xi, g \cdot p\bigr) = 
	g \ldot \rho\bigl(\xi, p\bigr) = 
	g \ldot \bigl(\xi \ldot \chi(p)\bigr)
\end{equation}
for every \( g \in G \), as one would expect from an infinitesimal 
action (but note that it only holds with respect to \( G \)).
The standard example we have in mind is the following.
\begin{example}
	\label{ex:cartan:orbit:homogenousSpace}
Let \( A \) be a Lie group acting on the manifold \( M \) and 
\( G \subseteq A \) a principal Lie subgroup, \ie, a Lie subgroup such that the quotient \( G \backslash A \) is a manifold and \( A \to G \backslash A \) is a principal \( G \)-bundle (this is automatic in finite dimensions).
Then \( A \to G \backslash A \) endowed with the right Maurer--Cartan 
form \( \theta_a(\xi \ldot a) = \xi \) is a Cartan model for the 
\( A \)-orbit through every point \( m \in M \) with \( \chi(a) = 
a \cdot m \) and \( \LieA{a}_m \) being the Lie algebra of the 
stabilizer \( A_m \) acting by right translations on \( A \).
For all \( \xi \in \LieA{a} \), we have \( \rho(\xi, a) = 
\tangent_a \chi (\xi \ldot a) = \xi \ldot (a \cdot m) \).
\end{example}

The following provides the appropriate generalization of an action 
of \( G^c \) on a symplectic manifold \( M \), now rephrased in terms 
of a Cartan geometry.
Recall that a Lie algebra action \( \Upsilon_m \) 
of \(\LieA{g}\) on a manifold \( M \) endowed with an almost complex 
structure \( j \) extends to an 
\( \I \)-\( j \)-complex-linear map \( \Upsilon^c_m: \LieA{g}_\C 
\to \TBundle_m M\) given by 
\((\xi_1 + \I \xi_2) \mapsto (\xi_1 + \I \xi_2) \ldot m 
\defeq \xi_1 \ldot m + j \, (\xi_2 \ldot m)\) for \( \xi_1 ,
\xi _2 \in  \LieA{g} \).
The image \( \Upsilon^c_m(\LieA{g}_\C) \) is denoted by \( \LieA{g}_\C \ldot m \).
\begin{defn}
	\label{def:cartan:complexOrbitModel}
Let \( M \) be a manifold, \( G \) a Lie group acting on \( M \),
and \( j \) an almost complex structure on \( M \).
A Cartan geometry \( (P, \theta) \) modeled on the Klein pair 
\( (\LieA{a}, \LieA{g}) \) with group \( G \) is said to be 
\emphDef{a model for the complex orbit} through the point 
\( m \in M \) if it is an orbit through \( m \) and
\begin{equation}
	\label{eq:cartan:complexOrbitModel}
\img \tangent_p \chi = \set{\xi \ldot \chi(p) \given \xi \in \LieA{a}} 
= \LieA{g}_\C \ldot \chi(p) = \img \Upsilon^c_{\chi(p)}
\end{equation}
for all \( p \in P \).
\end{defn}
Note that \( \LieA{a} \) might be strictly smaller than \( \LieA{g}_\C \) 
and, in fact, may not even be a complex vector space.
The important property is that its action \( \rho \) on \( M \) yields the 
same distribution as the complexified action \( \Upsilon^c \).
\begin{example}
	\label{ex:cartan:complexifiedGroup}
If the complexification \( G^c \) of \( G \) exists and acts holomorphically 
on \( M \), then \( G^c \to G \backslash G^c \) endowed with the right 
Maurer--Cartan form is a model for the complex orbit through every point 
\( m \in M \). In this case, the underlying Klein pair is given by the 
embedding of \( \LieA{g} \) into \( \LieA{g}_\C \) as the real part.
\end{example}
\begin{example}[Real GIT]
	\label{ex:cartan:realGIT}
Following \textcite{HeinznerSchwarz2007}, consider a compact Lie group 
\( U \) whose complexification \( U^c \) acts holomorphically on 
a K\"ahler manifold \( (M, \omega, j) \). 
Moreover, let \( A \subseteq U^c \) be a closed Lie subgroup such 
that the Cartan decomposition \( U^c = \exp(\I \LieA{u}) \, U \) 
induces a diffeomorphism \( A = \exp(\LieA{p}) \, G \), where 
\( G = U \intersect A \) and \( \LieA{p} \subseteq \I \LieA{u} \) 
is a \( \AdAction_G \)-stable subspace. Then \( A \to G \backslash A \) 
endowed with the Maurer--Cartan form is a Cartan bundle and the action 
induced via the inclusion \( A \subseteq U^c \) realizes it as an orbit 
through any point \( m \in M \). While the Cartan geometry of \( U^c \) 
yields a model for the \emph{complex} \( U \)-orbit, the Cartan geometry 
of \( A \) may no longer be a model for the \emph{complex} \( G \)-orbit.   
\end{example}
\begin{defn}
	\label{defn:cartan:duality}
A duality of the Lie algebra \( \LieA{g} \) is a vector space 
\( \LieA{g}^* \) together with a non-degenerate pairing 
\( \kappa: \LieA{g}^* \times \LieA{g} \to \R \).
A duality of the Klein pair \( (\LieA{a}, \LieA{g}) \) is 
a map \( \kappa_\LieA{a}: \LieA{g}^* \times \LieA{a} \to \R \) 
that is \( \AdAction_G \)-invariant and that vanishes on 
\( \LieA{g}^* \times \LieA{g} \).
\end{defn}
Note that \( \LieA{g}^* \) is free to be chosen; it does not 
have to be the topological dual space of \( \LieA{g} \).
In many examples, one has \( \LieA{g}^* = \LieA{g} \) 
with \( \kappa \) being the Killing form (in the finite-dimensional 
case) or some form of the \( \LTwoFunctionSpace \)-paring (in the 
infinite-dimensional case).
If \( \LieA{a} = \LieA{g}_\C \) and \( \LieA{g} \subseteq \LieA{g}_\C \) 
is embedded as the real part, then a natural choice for the pairing 
\( \kappa_\LieA{a} \) is the imaginary part \( \kappa_\LieA{a} = \Im \kappa_\C \)  of the extension 
of \( \kappa \) to a Hermitian inner product 
\( \kappa_\C: \LieA{g}_\C \times \LieA{g}_\C \to \C \)
on \( \LieA{g}_\C \), namely
\begin{equation}
\label{kappa_C}
\kappa_\C (\xi_1 + \I \xi_2, \eta_1 + \I \eta_2) \defeq 
\kappa (\xi_1, \eta_1) + \I \kappa(\xi_2, \eta_1) - 
\I \kappa(\xi_1, \eta_2) + \kappa(\xi_2, \eta_2).
\end{equation}

\begin{remark}\label{sign1}Note also that if a positive definite Hermitian form $h$ is $\C$-anti-linear in the first component and $\C$-linear 
in the second component then $h$ has a Riemannian metric $g$ as its real part and a K\"ahler form $\omega$ as its imaginary part,
\ie, $h = g + \I\omega$.
\end{remark}

\subsection{Equivariance of the momentum map and the Calabi operator}

Let \(G\) be a Lie group and 
\( (M, \omega) \) a symplectic manifold with a 
\( G \)-Hamiltonian action whose momentum map 
\( J: M \to \LieA{g}^* \) is defined with respect to 
some non-degenerate pairing \( \kappa: \LieA{g}^* 
\times \LieA{g} \to \R \). That is, \( J\) satisfies
\begin{equation}\label{momentum1}
\omega_m(\xi.m,X) + \kappa (\tangent_m J(X),\xi) = 0
\end{equation}
for all \( m \in M \), \( X \in \TBundle_m M \), and \( \xi \in \LieA{g} \).
Consider a Cartan model  \( (P, \theta) \) for the orbit through 
a chosen point \( m \in M \) with modeling map \( \chi: P \to M \) 
and infinitesimal action \( \rho \) as in \cref{def:cartan:orbitModel}.
For every \( p \in P \), we define the \emphDef{Calabi operator}
\begin{equation}
	\label{eq:cartan:calabiOperator}
	C_p \defeq \tangent_{\chi(p)} J \circ \rho_p: \LieA{a} \to \LieA{g}^*, \quad 
\text{\ie}, \quad C_p (\xi) = \tangent_{\chi(p)} J \bigl(\xi \ldot \chi(p)\bigr), 
\; \; \xi \in  \LieA{a}.
\end{equation}
In the K\"ahler example discussed in \cref{sec:kaehler}, the Calabi operator coincides with the operator used by Calabi in his study of extremal K\"ahler metrics \parencite{Calabi1985}.

Note that if \( J \) is (infinitesimally) equivariant, then \( C_p(\xi) 
= -\CoadAction_\xi J(\chi(p)) \) for all \( \xi \in \LieA{g} \).
This suggests the following definition.
\begin{defn}
	\label{def:cartan:momentumMapEquivariance}
We say that \( J \) is \emphDef{\( \LieA{a} \)-equivariant} relative 
to a pairing \( \kappa_{\LieA{a}}: \LieA{g}^* \times \LieA{a} \to \R \) 
of the Klein pair \( (\LieA{a}, \LieA{g}) \) if 
\begin{equation}
	\label{eq:cartan:momentumMapNonEquivariance:calabi}
\kappa_{\LieA{a}}(C_p \xi, \eta) - \kappa_{\LieA{a}}(C_p \eta, \xi) 
= - \kappa_{\LieA{a}}\bigl(J(\chi(p)), \commutator{\xi}{\eta}\bigr), 
\qquad p \in P, \; \xi, \eta \in \LieA{a}
\end{equation}
and if the non-equivariance \( 1 \)-cocycle \( \sigma: G \to \LieA{g}^* \) 
of \( J \) satisfies
\begin{equation}
	\label{eq:cartan:momentumMapNonEquivariance:cocycle}
\kappa_{\LieA{a}} \bigl(\sigma(g), \xi\bigr) = 0 
\end{equation}
for all \( g \in G \) and \( \xi \in \LieA{a} \).
\end{defn}
\begin{prop}
	\label{prop:cartan:momentumMapEquivarianceComplexCase}
Let \( \LieA{g} \) be a Lie algebra and \( \LieA{m} \subseteq \LieA{g} \) 
a subspace. Then on the Klein pair 
\( \LieA{g} \subseteq \LieA{g} \oplus \I \LieA{m} \equiv \LieA{a} \) 
define the duality \( \kappa_\LieA{a}: \LieA{g}^* \times \LieA{a} \to \R \)
\begin{equation}
	\kappa_\LieA{a}(\mu, \xi_1 + \I \xi_2) \defeq - \kappa(\mu, \xi_2), 
	\qquad \mu \in \LieA{g}^*, \; \;\xi_1 \in \LieA{g}, \; \;\xi_2 \in \LieA{m}.
\end{equation}
Let \( (P, \theta) \) be a Cartan geometry relative to the Klein 
pair \( (\LieA{a}, \LieA{g}) \), which is an orbit through 
\( m\) in an almost complex manifold \((M,j) \) where \( j \) is an almost complex structure on \( M \) compatible with 
the symplectic structure \( \omega \).
Assume that the infinitesimal action \( \rho \) 
is complex linear in the sense that
	\begin{equation}
		\rho(\xi_1 + \I \xi_2, p) = \rho(\xi_1, p) + j \rho(\xi_2, p).
	\end{equation}
Then the momentum map \( J: M \to \LieA{g}^* \) is \( \LieA{a} \)-equivariant 
relative to \( \kappa_\LieA{a} \) if and only if the non-equivariance 
\( 1 \)-cocycle \( \sigma: G \to \LieA{g}^* \) of \( J \) satisfies
	\begin{equation}
		\kappa\bigl(\sigma(g), \xi\bigr) = 0
	\end{equation}
	for all \( g \in G \) and \( \xi \in \LieA{m} \).
\end{prop}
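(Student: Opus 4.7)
The plan is to split the equivalence into its trivial half and its substantive half. The cocycle condition \(\kappa_{\LieA{a}}(\sigma(g), \xi) = 0\) for all \(\xi \in \LieA{a}\), required by \cref{def:cartan:momentumMapEquivariance}, unpacks by the very definition of \(\kappa_{\LieA{a}}\) (which vanishes on the \(\LieA{g}\)-summand and equals \(-\kappa(\cdot, \xi_2)\) on \(\I \LieA{m}\)) to the stated condition \(\kappa(\sigma(g), \xi_2) = 0\) for all \(\xi_2 \in \LieA{m}\). So one direction is immediate and the cocycle half of the converse costs nothing. The work is to show that under the complex-linearity of \(\rho\) and the \(\omega\)-\(j\) compatibility, the Calabi identity \eqref{eq:cartan:momentumMapNonEquivariance:calabi} is automatic once the cocycle condition on \(\LieA{m}\) holds.

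For this computation I write \(\xi = \xi_1 + \I \xi_2\), \(\eta = \eta_1 + \I \eta_2\), and \(m = \chi(p)\). The complex-linearity of \(\rho\) yields \(\rho(\xi, p) = \xi_1 \ldot m + j(\xi_2 \ldot m)\), and then the defining identity \eqref{momentum1} applied with the \(\LieA{g}\)-element \(\eta_2\) gives
\[
\kappa_{\LieA{a}}(C_p \xi, \eta) \;=\; -\kappa\bigl(\tangent J(\rho(\xi,p)), \eta_2\bigr) \;=\; -\omega(\xi_1 \ldot m, \eta_2 \ldot m) - \omega(j \xi_2 \ldot m, \eta_2 \ldot m).
\]
Subtracting the symmetric expression in \((\eta,\xi)\), the two \(j\)-bilinear terms cancel because the Kähler compatibility of \(\omega\) and \(j\) makes \(g(X,Y) \defeq \omega(X, jY)\) symmetric, so \(\omega(j \xi_2 \ldot m, \eta_2 \ldot m) = \omega(j \eta_2 \ldot m, \xi_2 \ldot m)\). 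What is left is
\[
\kappa_{\LieA{a}}(C_p \xi, \eta) - \kappa_{\LieA{a}}(C_p \eta, \xi) \;=\; -\omega(\xi_1 \ldot m, \eta_2 \ldot m) + \omega(\eta_1 \ldot m, \xi_2 \ldot m).
\]
The standard infinitesimal non-equivariance identity, obtained by differentiating \(J(g \cdot m) = \AdAction_g^* J(m) + \sigma(g)\) at \(g = e\) and comparing with \eqref{momentum1}, expresses \(\omega(\alpha \ldot m, \beta \ldot m)\) for \(\alpha, \beta \in \LieA{g}\) as a combination of \(\kappa(J(m), [\alpha, \beta])\) and the infinitesimal cocycle \(\Sigma(\alpha, \beta) = \kappa(\dif_e \sigma(\alpha), \beta)\). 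Since each of the two surviving \(\omega\)-terms has one argument in \(\LieA{m}\), the cocycle hypothesis \(\kappa(\sigma(g), \LieA{m}) = 0\) (differentiated at \(g = e\)) kills the \(\Sigma\)-contributions, and one is left precisely with \(\kappa\bigl(J(m),\,[\xi_1,\eta_2] + [\xi_2,\eta_1]\bigr)\). Interpreting \(\LieA{a} = \LieA{g} \oplus \I\LieA{m}\) with the complexified bracket, the imaginary part of \([\xi,\eta]\) is exactly \([\xi_1,\eta_2] + [\xi_2,\eta_1]\), so by the definition of \(\kappa_{\LieA{a}}\) this matches \(-\kappa_{\LieA{a}}(J(m), [\xi,\eta])\).

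The main obstacle is purely one of sign bookkeeping: the identity relating \(\omega(\alpha\ldot m, \beta \ldot m)\), \(\kappa(J,[\alpha,\beta])\), and \(\Sigma\) depends on the chosen momentum-map sign convention \eqref{momentum1}, and the match with \(-\kappa_{\LieA{a}}(J, [\xi,\eta])\) only works out with the correct sign of \(\kappa_{\LieA{a}}\) (the minus in \cref{prop:cartan:momentumMapEquivarianceComplexCase}) and the correct definition of \(\CoadAction\) under left actions. A secondary point is that the argument implicitly uses that \(\LieA{g} \oplus \I \LieA{m}\) carries the complexified bracket, which requires the closure conditions \([\LieA{g}, \LieA{m}] \subseteq \LieA{m}\) and \([\LieA{m}, \LieA{m}] \subseteq \LieA{g}\) guaranteed by \(\LieA{a}\) being a Lie algebra in the Klein pair. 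Beyond those conventions, the proof is a direct synthesis of three ingredients: \eqref{momentum1}, the Kähler symmetry \(\omega(X,jY) = \omega(Y,jX)\), and the complex-linearity of \(\rho\).
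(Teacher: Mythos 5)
Your proposal is correct and follows essentially the same route as the paper: both reduce the claim to the Calabi identity, use the infinitesimal non-equivariance cocycle \( \Sigma \) together with the defining relation of the momentum map, kill the symmetric \( j \)-terms via the \( \omega \)-\( j \) compatibility and complex linearity of \( \rho \), and observe that the remaining \( \Sigma \)-terms have second argument in \( \LieA{m} \), while the cocycle half of the equivalence is immediate from the definition of \( \kappa_{\LieA{a}} \). The only difference is organizational (you expand the Calabi terms into \( \omega \)-terms and regroup, whereas the paper carries the \( C_p \)-terms and the bracket term together), and your explicit remark on the bracket closure \( \commutator{\LieA{g}}{\LieA{m}} \subseteq \LieA{m} \) is a fair point the paper leaves implicit.
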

\begin{proof}
Recall that the non-equivariance \( 2 \)-cocycle 
\( \Sigma: \LieA{g} \times \LieA{g} \to \R \) of the 
momentum map \( J \) is defined by
	\begin{equation}\begin{split}
	\label{eq:normedsquared:nonequiv_first}
	\Sigma(\xi, \eta) &\defeq
	\kappa\left(\tangent_e \sigma (\xi), \eta \right) =
	\kappa(J(m), \commutator{\xi}{\eta}) +
	\omega_{m}(\xi \ldot m, \eta \ldot m) \\
	&=\kappa(J(m), \commutator{\xi}{\eta}) + 
	\kappa(\tangent_m J(\xi \ldot m), \eta).
	\end{split}
	\end{equation}
Let \( \xi = \xi_1 + \I \xi_2 \) and \( \eta = \eta_1 + \I \eta_2 \) 
with \( \xi_1, \eta_1 \in \LieA{g} \) and \( \xi_2, \eta_2 \in \LieA{m} \).
Then
\begin{equation}\begin{split}
	\kappa_\LieA{a} &\bigl(C_p \xi, \eta\bigr) + 
	\kappa_\LieA{a} \bigl(J\bigl(\chi(p)\bigr), \commutator{\xi}{\eta}\bigr) \\
	&= -\kappa \bigl(C_p (\xi_1 + \I \xi_2), \eta_2\bigr) - 
	\kappa \bigl(J\bigl(\chi(p)\bigr), \commutator{\xi_1}{\eta_2}\bigr) - 
	\kappa \bigl(J\bigl(\chi(p)\bigr), \commutator{\xi_2}{\eta_1}\bigr) \\
	&= -\Sigma(\xi_1, \eta_2) -\kappa \bigl(C_p (\I \xi_2), \eta_2\bigr) + 
	\Sigma(\eta_1, \xi_2) - \kappa(C_p \eta_1, \xi_2).
	\end{split}\end{equation}
On the other hand, by the complex linearity of the action \( \rho \), 
we find
\begin{equation}\label{eq:cartan:momentumMapNonEquivariance:CpOnI}\begin{split}
\kappa \bigl(C_p (\I \xi_2), \eta_2\bigr)
&= \kappa \bigl(\tangent_{\chi(p)} J(j \xi_2 \ldot \chi(p)), \eta_2\bigr) \\
&= - \omega(\eta_2 \ldot \chi(p), j \xi_2 \ldot \chi(p)) \\
&= \omega(j \eta_2 \ldot \chi(p), \xi_2 \ldot \chi(p)) \\
&= \kappa \bigl(C_p (\I\eta_2), \xi_2\bigr).
\end{split}\end{equation}
Hence, in summary,
\begin{equation}
\kappa_\LieA{a} \bigl(C_p \xi, \eta\bigr) - 
\kappa_\LieA{a} \bigl(C_p \eta, \xi\bigr) + 
\kappa_\LieA{a} \bigl(J\bigl(\chi(p)\bigr), \commutator{\xi}{\eta}\bigr)
= -\Sigma(\xi_1, \eta_2) + \Sigma(\eta_1, \xi_2).
\end{equation}
Since \( \eta_2 \) and \( \xi_2 \) are elements of \( \LieA{m} \), 
the right-hand side vanishes if the non-equivariance \( 1 \)-cocycle 
\( \sigma: G \to \LieA{g}^* \) of \( J \) satisfies 
\( \kappa\bigl(\sigma(g), \xi\bigr) = 0 \) for all \( g \in G \) 
and \( \xi \in \LieA{m} \).
The converse implication is immediate from 
\cref{eq:cartan:momentumMapNonEquivariance:cocycle}.
\end{proof}
Note that if \( \LieA{a} = \LieA{g}_\C \) with \( \kappa_{\LieA{a}} 
= \Im \kappa_\C \) as described above, then this is just saying that 
\( J \) has to be equivariant.
Thus, here we are harvesting the benefits of allowing \( \LieA{a} \) 
to be strictly smaller than \( \LieA{g}_\C \) in order to allow 
for non-equivariant momentum maps. In fact, as we will see below, 
this flexibility is needed in the example of K\"ahler geometry.

\subsection{Generalized Futaki invariant}
Let \( (M, \omega) \) be a symplectic manifold with a 
\( G \)-Hamiltonian action possessing a momentum map 
\( J: M \to \LieA{g}^* \) with respect to some non-degenerate
 pairing \( \kappa: \LieA{g}^* \times \LieA{g} \to \R \).
For every \( \zeta \in \LieA{g}_m \), the function
\begin{equation}
F_\zeta: G \ni g \mapsto \kappa\bigl(J(g \cdot m), \AdAction_g \zeta\bigr)
\in \R 
\end{equation}
is constant if \( J \) and \( \kappa \) are equivariant, namely, 
\(F_\zeta = \kappa(J(m), \zeta) \).
Moreover, \( F: \LieA{g}_m \ni \zeta \mapsto F_\zeta \in  
\R \) is a Lie algebra homomorphism since, for any 
\( \xi , \eta \in \LieA{g}_m\), we have 
\( F \bigl(\commutator{\xi}{\eta}\bigr) = 
- \omega_{m}(\xi \ldot m, \eta \ldot m) = 0 \) using equivariance
of \( J \). Thus, to every orbit \( G \cdot m \) we 
can assign the Lie algebra character 
\( F: \LieA{g}_m \to \R \). 
The generalized Futaki invariant is the extension of this character 
to a character on the stabilizer of a Cartan model for the orbit 
through \( m \).

To define it, choose a Cartan model \( (P, \theta) \) for the orbit 
through \( m \) using a map \( \chi: P \to M \) as in 
\cref{def:cartan:orbitModel}. Moreover, let 
\( \kappa_{\LieA{a}}: \LieA{g}^* \times \LieA{a} \to \R \) 
be a duality of the Klein pair \( (\LieA{a}, \LieA{g}) \) 
underlying \( P \) in the sense of \cref{defn:cartan:duality}.
For every \( \zeta \in \LieA{a}_m \), the value \( \theta_p(p \ldot \zeta) 
\in \LieA{a} \) depends equivariantly on \( p \in P \).
Thus, if \( J \) is equivariant, then the function
\begin{equation}
	\label{eq:futaki:generalizedFutakiInvariant}
	\tilde{F}_\zeta: P \to \R, \qquad p \mapsto 
	\kappa_{\LieA{a}}\Bigl(J\bigl(\chi(p)\bigr), 
	\theta_p(p \ldot \zeta)\Bigr)
\end{equation}
is \( G \)-invariant and hence descends to a function 
\( F_\zeta: B \to \R \).
We call \(F_\zeta \) the (generalized) \emphDef{Futaki invariant} 
associated with \( \zeta \in \LieA{a}_m \).
If \( J \) is not equivariant, then we 
have to assume that \( J \) is \( \LieA{a} \)-equivariant\footnotemark{}.
\footnotetext{Actually, it would be enough to require that the 
non-equivariance \( 1 \)-cocycle 
\( \sigma: G \to \LieA{g}^* \) vanishes when paired with 
elements of the form \( \theta_p (p \ldot \zeta) \in \LieA{a} \) 
with varying \( p \in P \).}

The classical Futaki invariant in K\"ahler geometry is constant 
as a function of the K\"ahler 
form in a given K\"ahler class. 
As we discuss now, the generalized Futaki invariant defined for a general 
Cartan model is also constant under natural assumptions.
This general result, in turn, sheds some light on what geometric 
properties lead to the fact that the classical Futaki invariant 
only depends on the K\"ahler class.
Note, in particular, that we do not need to assume that the Cartan 
bundle is a model for the \emph{complex} orbit.
In fact, the result does not even make any reference to complex geometry.
\begin{thm}
	\label{prop:cartan:constFutaki}
Let \( (M, \omega) \) be a symplectic manifold and let \( G \) be 
a Lie group acting symplectically on \( M \)  with momentum map 
\( J: M \to \LieA{g}^* \). Let 
\( (P, \theta) \) be a Cartan model for the orbit through 
\( m \in M \) and \( \kappa_{\LieA{a}} \) a duality of the underlying 
Klein pair \( (\LieA{a}, \LieA{g}) \).
Assume that \( J \) is \( \LieA{a} \)-equivariant and that \( \theta \) 
is torsion-free.
Choose \( \zeta \in \LieA{a}_m \). If \( \difLie_{\zeta^*} \theta \) 
takes values in \( \LieA{g} \), then 
the generalized Futaki invariant \( F_\zeta \) is locally constant,
\ie, \( F_\zeta \) is constant on the
connected components of \(B\).
\end{thm}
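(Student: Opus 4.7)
The plan is to show that the pullback $\tilde F_\zeta \in \sFunctionSpace(P,\R)$ has vanishing exterior derivative; since it descends to $F_\zeta$ on $B$, this will prove local constancy on $B$. So I would compute $\dif \tilde F_\zeta$ at an arbitrary $p \in P$ applied to an arbitrary $X \in \TBundle_p P$, track which terms live in $\LieA{g}$ (and hence get killed by $\kappa_{\LieA{a}}$), and use $\LieA{a}$-equivariance of $J$ to collapse the rest.

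Concretely, writing $\tilde F_\zeta = \kappa_{\LieA{a}}(J \circ \chi,\, \theta(\zeta^*))$ and using bilinearity of $\kappa_{\LieA{a}}$, the derivative splits into two pieces. The first piece is $\kappa_{\LieA{a}}(\tangent J \circ \tangent\chi(X),\, \theta_p(\zeta^*_p))$, which by the definition of the Calabi operator~\eqref{eq:cartan:calabiOperator} equals $\kappa_{\LieA{a}}(C_p \eta,\, \xi)$, where I have abbreviated $\xi \defeq \theta_p(\zeta^*_p)$ and $\eta \defeq \theta_p(X)$. For the second piece, I would apply Cartan's magic formula to the $\LieA{a}$-valued function $\theta(\zeta^*) = \iota_{\zeta^*}\theta$, yielding $\dif(\iota_{\zeta^*}\theta) = \difLie_{\zeta^*}\theta - \iota_{\zeta^*}\dif\theta$. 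The hypothesis that $\difLie_{\zeta^*}\theta$ takes values in $\LieA{g}$ kills the first term under $\kappa_{\LieA{a}}(J,\cdot)$. For the second term I would substitute $\dif\theta = \Omega + \tfrac12 \wedgeLie{\theta}{\theta}$: torsion-freeness sends the curvature piece into $\LieA{g}$, so it too is annihilated by $\kappa_{\LieA{a}}$, leaving only $-\kappa_{\LieA{a}}(J(\chi(p)),\, \commutator{\xi}{\eta})$.

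Combining the two pieces gives
\begin{equation}
\dif \tilde F_\zeta(X) = \kappa_{\LieA{a}}(C_p \eta, \xi) - \kappa_{\LieA{a}}\bigl(J(\chi(p)),\, \commutator{\xi}{\eta}\bigr).
\end{equation}
Now I invoke the $\LieA{a}$-equivariance identity~\eqref{eq:cartan:momentumMapNonEquivariance:calabi} to replace the commutator term by $\kappa_{\LieA{a}}(C_p\xi,\eta) - \kappa_{\LieA{a}}(C_p\eta,\xi)$, whereupon the expression collapses to $\kappa_{\LieA{a}}(C_p\xi,\eta)$. Finally, $C_p\xi = \tangent_{\chi(p)} J (\tangent_p\chi(\theta_p^{-1}(\xi))) = \tangent_{\chi(p)} J (\tangent_p\chi(\zeta^*_p))$ vanishes because $\tangent\chi \circ \zeta^* = 0$ is part of the definition of an orbit model (\cref{def:cartan:orbitModel}(ii)). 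Hence $\dif \tilde F_\zeta = 0$ on $P$, so $\tilde F_\zeta$ is locally constant; its $G$-invariance then transfers local constancy to $F_\zeta$ on $B$.

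The only real subtlety I anticipate is keeping the bookkeeping straight around the Cartan magic formula and the sign conventions of $\wedgeLie{\theta}{\theta}$ (with the warning in the paper about the sign for left principal bundles); the rest is a clean cancellation driven by the three hypotheses (torsion-free, $\difLie_{\zeta^*}\theta \in \LieA{g}$, $\LieA{a}$-equivariant $J$), each of which is used exactly once.
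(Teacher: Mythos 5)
Your proposal is correct and follows essentially the same route as the paper: the same splitting of the derivative of $\tilde F_\zeta$ into a Calabi-operator term and a term coming from differentiating $\theta(\zeta^*)$, with torsion-freeness and the hypothesis on $\difLie_{\zeta^*}\theta$ killing the $\LieA{g}$-valued pieces, the $\LieA{a}$-equivariance identity~\eqref{eq:cartan:momentumMapNonEquivariance:calabi} collapsing the remainder to $\kappa_{\LieA{a}}(C_p\theta_p(p\ldot\zeta),\xi)$, and the vanishing $\tangent_p\chi(p\ldot\zeta)=0$ finishing the argument. The only difference is presentational: the paper phrases the computation via the universal covariant derivative on the associated bundles $P\times_G\LieA{g}^*$ and $P\times_G\LieA{a}$, whereas you differentiate $\tilde F_\zeta$ directly on $P$ using Cartan's magic formula, which amounts to the same calculation.
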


Conceptually, this theorem is a generalization of the basic fact 
that if a closed real-valued 
\( 1 \)-form \( \theta \) is invariant under an \( H \)-action, 
then for each \( \zeta \in \LieA{h} \) the function 
\( \theta(\zeta^*) \) is constant.
In our setting, the \( \LieA{a} \slash \LieA{g} \) part of the 
form \( \theta \) is invariant under the stabilizer \( \LieA{a}_m \), 
but \( \theta \) is not necessarily closed.
The resulting variation of \( \theta(\zeta^*) \) is compensated 
by the variation of the map \( J \circ \chi \).
See also \cref{Bourg}.

In the special case when \(P\) is the complexification \(G^c\) 
of a finite-dimensional compact Lie group \(G\) acting 
holomorphically on a finite-dimensional K\"ahler manifold 
(\cf \cref{ex:cartan:complexifiedGroup}), 
\cref{prop:cartan:constFutaki} recovers the result of 
\textcite[Proposition~6]{Wang2004}.

\begin{proof}
Before starting the proof we recall a formula
from Cartan geometry (see \parencite{Sharpe1997}). Let \((P,\theta)\) be a 
Cartan geometry modeled on the Klein pair \( (\LieA{a}, \LieA{g}) \) 
with group \( G \) on a manifold \( B \), \(F\) be a manifold, 
and \(\rho :G \times  F \to  F\) a smooth left action. Then
\(G\) acts freely on the left on  \( P \times F \) by
\(g \cdot (p,f)\defeq (g \cdot p, \rho(g)f)\), so we can form the
fiber bundle \(P \times_G F \to  B\) with typical fiber
\(F\). Let \( \nabla_\xi s \) denote the universal covariant derivative 
induced by \( \theta \) in the direction \( \xi \in \LieA{a} \) of a 
section \( s \) of  the fiber bundle \( P \times_G F \to B \); 
by definition,
\begin{equation}
\label{def_universal_covariant_derivative}
\nabla_\xi s (b) \defeq (\iota_p \circ \tangent_p \tilde{s})
\bigl(\theta^{-1}_p(\xi)\bigr),\quad \text{for all} \quad 
b \in B, \;\xi \in \LieA{a},
\end{equation}  
where \( p \in \pi^{-1}(b)\), 
\( \iota_p(f) = \equivClass{p,f} \), and \( \tilde{s}: P \to F \) 
is the \(G\)-equivariant map uniquely corresponding to \( s \), 
\ie, \(s(\pi(p)) = [p,\tilde{s}(p)]\).

We apply~\eqref{def_universal_covariant_derivative} in the following 
particular case: \( F = \LieA{g}^* \),  \( \rho(g) = 
\operatorname{Ad}^*_{g^{-1}} + \sigma(g) \), where 
\( \sigma:G \to \LieA{g}^* \) is the non-equivariance 
1-cocycle of the momentum map \( J:M \to \LieA{g}^* \). 
Then \( J \circ \chi:P \to  \LieA{g}^* \)
is \(G\)-equivariant (relative to \( \rho \)) so it plays the role of 
\(\tilde{s}\) in the previous considerations, \ie, it defines a
section \( s_J:B \to P \times_G \LieA{g}^* \) by
\(s_J(\pi(p))=[p,(J \circ \chi)(p)]\). Thus, letting \(b= \pi(p)\),
we get for every \( \xi \in \LieA{a} \)
\begin{equation}
\begin{split}
\nabla_\xi s_J (b)
= \left(\iota_p \circ \tangent_{\chi(p)} J \circ 
\tangent_p \chi\right) \bigl(\theta^{-1}_p(\xi)\bigr)
= \left(\iota_p \circ C_{p}\right) (\xi),
\end{split}\end{equation}
using~\eqref{eq:cartan:infAction} and the definition of the Calabi 
operator~\eqref{eq:cartan:calabiOperator}.

On the other hand, for \( \zeta \in \LieA{a}_m \), let \( G_\zeta \) 
be the section of \( P \times_G \LieA{a} \to  B\) corresponding 
to the \( G \)-equivariant map \( \tilde{G}_\zeta: P\ni p \mapsto 
\theta_p(p\ldot \zeta) \in\LieA{a} \),
\ie,  \( G_\zeta (\pi(p)) = [p, \tilde{G}_\zeta(p)] \). We apply
again~\eqref{def_universal_covariant_derivative} and get
\begin{equation}\begin{split}
\nabla_\xi G_\zeta (b)
&= \iota_p\left(\tangent_p \tilde{G}_\zeta\bigl(\theta_p^{-1}(\xi)\bigr)\right)
= \iota_p\left(\theta_p^{-1}(\xi)(\tilde{G}_\zeta)
\right) \\
&= \iota_p \bigl(\theta_p^{-1}(\xi)(\theta(\zeta^*))\bigr) \in  
P \times_G \LieA{a},
\end{split}\end{equation}
where \( \theta_p^{-1}(\xi)(\tilde{G}_\zeta) \) denotes the action 
of the derivation \( \theta_p^{-1}(\xi) \in \TBundle_p P \) on the 
\( \LieA{a} \)-valued function \( \tilde{G}_\zeta \).
Viewing \( p \mapsto \theta_p^{-1}(\xi) \) as a vector field on \( P \),
we have
\begin{equation}\begin{split}
\theta^{-1}(\xi)\bigl(\theta(\zeta^*)\bigr)
&= \dif \theta(\theta^{-1}(\xi), \zeta^*) +
\theta\bigl(\commutator{\theta^{-1}(\xi)}{\zeta^*}\bigr) + 
\zeta^*\bigl(\theta(\theta^{-1}(\xi))\bigr)\\
&= \Omega(\theta^{-1}(\xi), \zeta^*) + \commutator{\xi}{\theta(\zeta^*)} + 
\bigl(\difLie_{\zeta^*} \theta\bigr) (\theta^{-1}(\xi)) \in  \LieA{a} .
\end{split}\end{equation}
Thus, under the standing assumption that the torsion vanishes (\ie, 
the curvature \( \Omega \defeq \dif \theta - \frac{1}{2} 
\wedgeLie{\theta}{\theta} \in \DiffFormSpace^2(P, \LieA{a}) \)
takes values in \(\LieA{g}\)) and that 
\( \difLie_{\zeta^*} \theta \) takes values in \( \LieA{g} \), we find
\begin{equation}
	\label{eq:cartan:constFutaki:covszeta}
\nabla_\xi  G_\zeta (b) \mod \LieA{g} 
= \iota_p \commutator*{\xi}{\theta_p(p \ldot \zeta)}.
\end{equation}
Introducing \( \tilde{\kappa}_{\LieA{a}} : (P\times_G \LieA{g}^*)
\times_M (P\times_G \LieA{a}) \ni ([p, \mu],[p, \eta])
\mapsto \kappa_{\LieA{a}} (\mu, \eta) \in  \R \), the 
definition~\eqref{eq:futaki:generalizedFutakiInvariant} 
of \( \tilde{F}_\zeta \) takes the form
\begin{equation}
	\tilde{F}_\zeta = \kappa_{\LieA{a}} (J \circ \chi, 
	\tilde{G}_\zeta )\; \;\text{ and hence }\; \;
	F_\zeta = \tilde{\kappa}_{\LieA{a}} (s_J, G_\zeta).
\end{equation}
Using the non-equivariance 
property~\eqref{eq:cartan:momentumMapNonEquivariance:calabi} of 
\( J \), we get for \(b= \pi(p)\),
\begin{equation}\begin{split}
\nabla_\xi F_\zeta(b)
&= \tilde{\kappa}_{\LieA{a}} \Bigl(\nabla_\xi s_J(b), G_\zeta(b) \Bigr) 
+ \tilde{\kappa}_{\LieA{a}} \Bigl(s_J(b), 
\nabla_\xi G_\zeta(b) \Bigr) \\
&= \kappa_{\LieA{a}} \Bigl(C_{p} \xi, 
\theta_p(p \ldot \zeta) \Bigr) + 
\kappa_{\LieA{a}} \Bigl(J\bigl(\chi(p)\bigr), 
\commutator{\xi}{\theta_p(p \ldot \zeta)} \Bigr) \\
&= \kappa_{\LieA{a}} \Bigl(C_p \theta_p(p \ldot \zeta), \xi \Bigr).
\end{split}\end{equation}
On the other hand, the definition of \( C_p \) implies 
\( C_p \theta_p(p \ldot \zeta) = 
\tangent_{\chi(p)} J \bigl( \tangent_p \chi(p \ldot \zeta) \bigr) \).
Since \( \zeta \in \LieA{a}_m \), we have 
\( \tangent_p \chi(p \ldot \zeta) = 0 \) and, hence, 
\( \nabla_\xi F_\zeta = 0 \) for all \( \xi \in \LieA{a} \).

Since \( \theta \) is an isomorphism, \( \nabla_\xi F_\zeta = 0\) for 
all \( \xi \) implies that \( \tangent \tilde{F}_\zeta = 0 \) 
by~\eqref{def_universal_covariant_derivative}. But \( \tilde{F}_\zeta \) is 
also \(G\)-invariant, so \( F_\zeta:M \to  \mathbb{R} \) is (locally) 
constant. 
\end{proof}

Thus, for connected \( B \), the generalized Futaki invariant 
yields a map \( F: \zeta \ni\LieA{a}_m \mapsto F_\zeta \in  \R \), 
for every \( m \in M \) if \( \difLie_{\zeta^*} \theta \) takes 
values in \( \LieA{g} \) for all \( \zeta \in \LieA{a}_m \).
\begin{prop}
	\label{prop:cartan:futakiIsCharacter}
In the setting of \cref{prop:cartan:constFutaki}, assume, in addition, 
that \( B \) is connected and that \( \difLie_{\zeta^*} \theta \) takes 
values in \( \LieA{g} \) for all \( \zeta \in \LieA{a}_m \).
Then the map \( F: \LieA{a}_m \to \R, \zeta \mapsto F_\zeta \) 
is a Lie algebra character.
\end{prop}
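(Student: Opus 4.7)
The plan is to combine \cref{prop:cartan:constFutaki}, which shows each $F_\zeta$ is constant on the connected base $B$, with a single pointwise evaluation at $b_0 \defeq \pi(p_0)$ for a chosen $p_0 \in \chi^{-1}(m)$. Linearity of $\zeta \mapsto F_\zeta$ is immediate from the bilinearity of $\kappa_{\LieA{a}}$ and the linearity of $\zeta \mapsto p_0 \ldot \zeta$, so all the content reduces to verifying $F_{\commutator{\zeta}{\eta}}(b_0) = 0$; constancy then propagates the vanishing across $B$.

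First I would use that $\zeta \mapsto \zeta^*$ is a Lie algebra homomorphism for a right action, so $p_0 \ldot \commutator{\zeta}{\eta} = \commutator{\zeta^*}{\eta^*}_{p_0}$, and then apply the structural equation. Combining the identity $\dif\theta(X,Y) = X(\theta(Y)) - Y(\theta(X)) - \theta(\commutator{X}{Y})$ with $\dif\theta = \Omega + \frac{1}{2}\wedgeLie{\theta}{\theta}$ and rewriting $X(\theta(Y))$ via the Lie derivative, one obtains the $\LieA{a}$-valued identity
\begin{equation*}
\theta(\commutator{\zeta^*}{\eta^*}) = \commutator{\theta(\zeta^*)}{\theta(\eta^*)} + \Omega(\zeta^*,\eta^*) + (\difLie_{\eta^*}\theta)(\zeta^*) - (\difLie_{\zeta^*}\theta)(\eta^*).
\end{equation*}
Under torsion-freeness and the standing hypothesis that $\difLie_{\zeta^*}\theta$ and $\difLie_{\eta^*}\theta$ take values in $\LieA{g}$, the last three terms lie in $\LieA{g}$. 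Pairing with $J(m) \in \LieA{g}^*$ via $\kappa_{\LieA{a}}$ annihilates them by \cref{defn:cartan:duality}, leaving
\begin{equation*}
F_{\commutator{\zeta}{\eta}}(b_0) = \kappa_{\LieA{a}}\bigl(J(m),\,\commutator{\tilde\zeta}{\tilde\eta}\bigr), \quad \tilde\zeta \defeq \theta_{p_0}(p_0 \ldot \zeta),\; \tilde\eta \defeq \theta_{p_0}(p_0 \ldot \eta).
\end{equation*}

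Next I would invoke the $\LieA{a}$-equivariance of $J$ (\cref{def:cartan:momentumMapEquivariance}) applied to $\tilde\zeta, \tilde\eta \in \LieA{a}$ at $p_0$, rewriting the right-hand side as $\kappa_{\LieA{a}}(C_{p_0}\tilde\eta, \tilde\zeta) - \kappa_{\LieA{a}}(C_{p_0}\tilde\zeta, \tilde\eta)$. Because $\theta_{p_0}^{-1}(\tilde\zeta) = p_0 \ldot \zeta = \zeta^*_{p_0}$, the definition of the Calabi operator gives $C_{p_0}\tilde\zeta = \tangent_m J\bigl(\tangent_{p_0}\chi(\zeta^*_{p_0})\bigr)$, which vanishes because $\zeta \in \LieA{a}_m$ means exactly $\tangent\chi(\zeta^*) = 0$ by \cref{def:cartan:orbitModel}(ii); the same holds for $\tilde\eta$. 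Hence $F_{\commutator{\zeta}{\eta}}(b_0) = 0$, as desired.

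The only obstacle is bookkeeping of signs --- the right-action convention relating $\commutator{\zeta^*}{\eta^*}$ and $\commutator{\zeta}{\eta}^*$, and the minus sign encoded in $\kappa_{\LieA{a}}$ in the style of \cref{prop:cartan:momentumMapEquivarianceComplexCase} --- but since the target identity is a vanishing statement, any consistent choice of conventions yields the same conclusion. Conceptually, the argument recycles exactly the two structural hypotheses (torsion-freeness and $\difLie_{\zeta^*}\theta \in \LieA{g}$) that already powered \cref{prop:cartan:constFutaki}, with $\LieA{a}$-equivariance supplying the one new ingredient that converts the bracket pairing with $J(m)$ into a symmetrized Calabi pairing vanishing at every $p_0 \in \chi^{-1}(m)$.
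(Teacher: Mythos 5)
Your proposal is correct and follows essentially the same route as the paper: the structure equation plus torsion-freeness and $\difLie_{\zeta^*}\theta \in \LieA{g}$ give $\theta\bigl(\commutator{\zeta^*}{\eta^*}\bigr) \equiv \commutator*{\theta(\zeta^*)}{\theta(\eta^*)} \bmod \LieA{g}$, the duality kills the $\LieA{g}$-part, and the $\LieA{a}$-equivariance of $J$ converts the bracket pairing into Calabi terms $C_p\theta_p(p\ldot\zeta)$, $C_p\theta_p(p\ldot\eta)$, which vanish since $\zeta,\eta \in \LieA{a}_m$. The only cosmetic difference is that you evaluate at a single $p_0 \in \chi^{-1}(m)$ and invoke constancy from \cref{prop:cartan:constFutaki}, whereas the paper runs the same computation at every $p \in P$ directly; both are fine.
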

\begin{proof}
Let \( \zeta, \eta \in \LieA{a}_m \). Using
\begin{equation}
	\dif \theta \bigl(\zeta^*, \eta^*\bigr) = 
	\difLie_{\zeta^*} \bigl(\theta(\eta^*)\bigr) - 
	\difLie_{\eta^*} \bigl(\theta(\zeta^*)\bigr) - 
	\theta\bigl(\commutator{\zeta^*}{\eta^*}\bigr)
\end{equation}
and
\begin{equation}
	\difLie_{\zeta^*} \bigl(\theta(\eta^*)\bigr) = 
	\bigl(\difLie_{\zeta^*} \theta\bigr)(\eta^*) + 
	\theta\bigl(\commutator{\zeta^*}{\eta^*}\bigr)
\end{equation}
we find
\begin{equation}\begin{split}
	\Omega(\zeta^*, \eta^*)
	&= \dif \theta \bigl(\zeta^*, \eta^*\bigr) - 
	\commutator*{\theta(\zeta^*)}{\theta(\eta^*)}\\
	&= \bigl(\difLie_{\zeta^*} \theta\bigr)(\eta^*) - 
	\bigl(\difLie_{\eta^*} \theta\bigr)(\zeta^*) + 
	\theta\bigl(\commutator{\zeta^*}{\eta^*}\bigr) - 
	\commutator*{\theta(\zeta^*)}{\theta(\eta^*)},
\end{split}\end{equation}
where \( \Omega \) is the curvature of \( \theta \).
Since \( \Omega \) and \( \difLie_{\zeta^*} \theta \) take 
values in \( \LieA{g} \), we conclude
\begin{equation}
	\theta\bigl(\commutator*{\zeta^*}{\eta^*}\bigr) \mod \LieA{g} = 
	\commutator*{\theta(\zeta^*)}{\theta(\eta^*)} \mod \LieA{g}.
\end{equation}
So, for every \( p \in P \), we have from~\eqref{eq:futaki:generalizedFutakiInvariant},
\begin{equation}\begin{split}
	\tilde{F}_{\commutator{\zeta}{\eta}}(p)
	&= \kappa_{\LieA{a}}\Bigl(J(\chi(p)), \theta_p\bigl(p \ldot 
	\commutator{\zeta}{\eta}\bigr)\Bigr)
			\\
	&= \kappa_{\LieA{a}}\Bigl(J(\chi(p)), \theta_p
	\bigl(\commutator{\zeta^*}{\eta^*}_p\bigr)\Bigr)
			\\
	&= \kappa_{\LieA{a}}\Bigl(J(\chi(p)), 
	\commutator*{\theta_p(p \ldot \zeta)}{\theta_p(p \ldot \eta)}\Bigr)
			\\
	&= - \kappa_{\LieA{a}}\Bigl(C_p \theta_p(p \ldot \zeta), 
	\theta_p(p \ldot \eta)\Bigr) + 
	\kappa_{\LieA{a}}\Bigl(C_p \theta_p(p \ldot \eta), 
	\theta_p(p \ldot \zeta)\Bigr),
	\end{split}\end{equation}
where the last equality uses the \( \LieA{a} \)-equivariance 
of \( J \), \ie, \cref{eq:cartan:momentumMapNonEquivariance:calabi}.
Now, as in the proof of \cref{prop:cartan:constFutaki}, we conclude 
that \( C_p \theta_p(p \ldot \zeta) \) and 
\( C_p \theta_p(p \ldot \eta) \) vanish, because 
\( \zeta, \eta \in \LieA{a}_m \).
Hence, \( F_{\commutator{\zeta}{\eta}} = 0 \) as claimed.
\end{proof}
\begin{remark}
Note that we only required \( (P, \theta) \) to be an orbit of \( M \), and not necessarily a \emph{complex} orbit, \ie, we did not make use of the 
property \( \img \tangent_p \chi = \LieA{g}_\C \ldot \chi(p) \).
\end{remark}
\begin{remark}
	With natural modifications, the above construction can be carried out in the slightly more general setting, where the structure group of the Cartan bundle is no longer equal to the group that acts on the symplectic manifold \( M \).
	For example, one may allow for a Cartan \( G \)-bundle with a Lie group homomorphism \( \varPhi: G \to U \), where \( U \) acts symplectically on \( M \).
	Such an extension is necessary to cover the example of real GIT discussed in \cref{ex:cartan:realGIT}.
	Another advantage of such a generalization is that one would obtain constructions that are functorial with respect to morphisms of Cartan geometries.
	We leave the details to the interested reader.
\end{remark}

\subsection{Kempf--Ness function}
\label{sec:kempfNess}

We start by recalling the standard definition of the Kempf--Ness function.
Let \( G \) be a compact Lie group with complexification \( G^c \).
Assume that \( G^c \) acts linearly on a finite-dimensional complex 
vector space \( V \) endowed with a \( G \)-invariant Hermitian form 
\( V \times V \to \C \). Let \( M \) be a smooth \( G^c \)-invariant 
submanifold of the projective space \( \projectiveSpace V \).
Then the induced \( G \)-action on \( M \) is Hamiltonian with 
momentum map \( J: M \to \LieA{g}^* \).
For every \( \equivClass{v} \in M \), the lifted 
\emphDef{Kempf--Ness function} \( \Psi_{\equivClass{v}}: G^c \to \R \) 
is defined by
\begin{equation}
	\Psi_{\equivClass{v}}(a) \defeq \frac{1}{2} \log \norm{a \cdot v}^2 
	- \frac{1}{2} \log \norm{v}^2.
\end{equation}
A direct calculation shows that 
\( \tangent_a \Psi_{\equivClass{v}} (\zeta \ldot a) 
= \Im \kappa_\C \bigl(J(a \cdot m), \zeta\bigr) \) 
for all \( a \in G^c \) and \( \zeta \in \LieA{g}_\C \), 
where \( J \) is the momentum map for the Fubini--Study symplectic 
structure (extended complex-linearly to \( \LieA{g}_\C \)), 
see \parencite[Lemma~8.3]{GeorgoulasRobbinEtAl2018}.
In fact, this relation uniquely characterizes the Kempf--Ness 
function up to a constant (if \( G^c \) is connected), and is 
used to define the Kempf--Ness function for more general K\"ahler 
manifolds \( M \) endowed with a \( G^c \)-action.
We can employ the same idea to define a Kempf--Ness function 
for Cartan geometries.
\begin{thm}
	\label{prop:cartan:kempfNess}
Let \( (M, \omega) \) be a symplectic manifold and let \( G \) be a 
Lie group acting symplectically on \( M \)  with momentum map 
\( J: M \to \LieA{g}^* \). Let \( (P, \theta) \) be a Cartan model 
for an orbit through \( m \in M \) via the map \( \chi: P \to M \), 
with Klein pair \( (\LieA{a}, \LieA{g}) \) and duality \( \kappa_\LieA{a} \).
Assume that \( J \) is \( \LieA{a} \)-equivariant.
Then the following holds:
\begin{thmenumerate}
\item
The \( 1 \)-form \( \alpha \) on \( P \) defined by
\begin{equation}
	\label{eq:cartan:kempfNess:alpha}
\alpha_p(X) = \kappa_\LieA{a} \bigl(J(\chi(p)), \theta_p(X)\bigr), 
\qquad p \in P, X \in \TBundle_p P,
\end{equation}
is basic and hence induces a \( 1 \)-form \( \check{\alpha} \) on \( B \).
\item
We have
\begin{equation}
	\dif \alpha = - \kappa_\LieA{a} \bigl(J \circ \chi, \Omega \bigr),
\end{equation}
where \( \Omega \defeq \dif \theta - \frac{1}{2} 
\wedgeLie{\theta}{\theta} \in \DiffFormSpace^2(P, \LieA{a}) \) 
is the curvature of \( \theta \).
In particular, if \( \theta \) is torsion-free, then 
\( \dif \alpha = 0 = \dif \check{\alpha} \).
\item
\label{i:cartan:kempfNess:existence}
If \( B \) is simply-connected and \( \theta \) is torsion-free, 
then there exists a function \( \Psi_m: B \to \R \) such that 
\( \check{\alpha} = \dif \Psi_m \).
			\qedhere
\end{thmenumerate}
\end{thm}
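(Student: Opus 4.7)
The plan is to establish the three claims in turn, reducing each to a pointwise algebraic identity in the Klein pair $(\LieA{a}, \LieA{g})$ together with the duality $\kappa_{\LieA{a}}$.

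For \ref{eq:cartan:kempfNess:alpha} and its basicness, I would check horizontality and $G$-invariance of $\alpha$ separately. Horizontality follows by evaluating on a fundamental vector field $\xi^*$ with $\xi \in \LieA{g}$: then $\theta_p(\xi^*_p) = \xi \in \LieA{g}$, so $\alpha_p(\xi^*_p) = \kappa_{\LieA{a}}(J(\chi(p)), \xi) = 0$ because $\kappa_{\LieA{a}}$ annihilates $\LieA{g}^* \times \LieA{g}$ by \cref{defn:cartan:duality}. For $G$-invariance, I pull $\alpha$ back by left translation $L_g(p) = g \cdot p$ and use the $G$-equivariances $\chi(g \cdot p) = g \cdot \chi(p)$ and $\theta_{g \cdot p}(g \ldot X) = \AdAction_g \theta_p(X)$. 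Splitting $J \circ \chi$ by its nonequivariance 1-cocycle $\sigma$, the $\AdAction$-invariance of $\kappa_{\LieA{a}}$ reproduces $\alpha_p(X)$ from the equivariant piece, while the cocycle piece vanishes exactly by \eqref{eq:cartan:momentumMapNonEquivariance:cocycle}, which is built into the $\LieA{a}$-equivariance hypothesis. Hence $\alpha$ is basic and descends to a smooth $\check{\alpha} \in \DiffFormSpace^1(B)$.

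For part (ii), I compute $\dif \alpha$ using the Cartan formula on the pointwise frame of constant-$\theta$ vector fields $\tilde{X}_\xi \defeq \theta^{-1}(\xi)$ for $\xi \in \LieA{a}$; these span $\TBundle_p P$ at every $p$ since $\theta_p$ is an isomorphism. On such fields, $\alpha(\tilde{X}_\eta)(p) = \kappa_{\LieA{a}}(J(\chi(p)), \eta)$, whose directional derivative along $\tilde{X}_\xi$ is $\kappa_{\LieA{a}}(C_p \xi, \eta)$ by the definition of the Calabi operator, since $\tangent_p \chi(\theta^{-1}(\xi)) = \rho_p(\xi)$. The commutator term is handled by applying the Cartan structure equation $\dif \theta = \Omega + \tfrac{1}{2} \wedgeLie{\theta}{\theta}$ to $(\tilde{X}_\xi, \tilde{X}_\eta)$: since $\theta(\tilde{X}_\xi) = \xi$ and $\theta(\tilde{X}_\eta) = \eta$ are constants, the usual expansion of $\dif \theta$ yields $\theta([\tilde{X}_\xi, \tilde{X}_\eta])$ as a combination of $\Omega(\tilde{X}_\xi, \tilde{X}_\eta)$ and $[\xi, \eta]$. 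Reassembling the three Cartan-formula terms and invoking \eqref{eq:cartan:momentumMapNonEquivariance:calabi} to cancel the $[\xi, \eta]$ contributions yields the stated identity $\dif \alpha = -\kappa_{\LieA{a}}(J \circ \chi, \Omega)$. When $\theta$ is torsion-free, $\Omega$ takes values in $\LieA{g}$, on which $\kappa_{\LieA{a}}$ vanishes, so $\dif \alpha = 0$; submersivity of $\pi$ then forces $\dif \check{\alpha} = 0$ as well.

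For part (iii), the closed form $\check{\alpha}$ on the simply connected base $B$ admits a primitive by path integration: fix a basepoint $b_0 \in B$ and set $\Psi_m(b) = \int_\gamma \check{\alpha}$ for any piecewise-smooth path $\gamma$ from $b_0$ to $b$. Path independence follows from $\dif \check{\alpha} = 0$ combined with simple connectedness, by filling any loop with a disk and applying Stokes' theorem in the standard way; the resulting smooth function satisfies $\dif \Psi_m = \check{\alpha}$. The main technical obstacle is part (ii), where one must track the sign and factor conventions in the curvature formula $\Omega = \dif \theta - \tfrac{1}{2} \wedgeLie{\theta}{\theta}$ (nonstandard for the left principal bundle convention, as flagged in the paper's footnote) and align them precisely with the Calabi-operator form of $\LieA{a}$-equivariance so that the $[\xi, \eta]$ contributions cancel cleanly. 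Part (iii) additionally invokes the Poincar\'e lemma on $B$, which is routine in the smooth or Fr\'echet setting but deserves verification when applying the theorem to specific infinite-dimensional bases.
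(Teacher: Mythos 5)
Your proposal is correct and takes essentially the same route as the paper: basicness via evaluation on fundamental vector fields plus the \(G\)-equivariance of \(\theta\), \(\chi\) and the cocycle condition, then \(\dif\alpha\) computed from the derivative of \(J\circ\chi\) (giving the Calabi operator), the structure equation, and the \(\LieA{a}\)-equivariance identity to cancel the bracket terms, and finally the Poincar\'e lemma on the simply connected base. The only caveat is the overall sign in part (ii): with the stated conventions a careful expansion gives \(\dif\alpha = +\kappa_{\LieA{a}}(J\circ\chi,\Omega)\) (as the paper's own proof in fact shows, in tension with the sign in the statement), but this discrepancy is internal to the paper and does not affect the torsion-free conclusion \(\dif\alpha=0\) or part (iii).
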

The function \( \Psi_m \) is unique up to a constant and we call 
it the \emphDef{Kempf--Ness function} at \( m \).
\begin{proof}
Let \( \alpha \) be the \( 1 \)-form defined above.
Since the non-equivariance cocycle of \( J \) vanishes when 
paired with \( \LieA{a} \), the \( G \)-invariance of \( \alpha \) 
follows from the \( G \)-equivariance of \( \theta \) 
and \( \chi \). Moreover, for all \( \xi \in \LieA{g} \), 
we have \( \alpha_p (\xi \ldot p) = 
\kappa_{\LieA{a}} \bigl(J(\chi(p)), \theta_p (\xi \ldot p)\bigr) 
= 0 \) since \( \theta_p (\xi \ldot p) = \xi \) and 
\( \kappa_{\LieA{a}} \) vanishes on \( \LieA{g} \).
So \( \alpha \) is basic and thus descends to a 
\( 1 \)-form \( \check{\alpha} \) on \( B \).

For the second part, abbreviate \( s \defeq J \circ \chi: 
P \to \LieA{g}^* \). Then \( \alpha_p(X) = 
\kappa_{\LieA{a}}\bigl(s(p), \theta_p(X)\bigr) \), 
and it thus suffices to calculate \( \dif s \) and 
\( \dif \theta \). For the first, we have
\begin{equation}\begin{split}
\tangent_p s \bigl(\theta^{-1}_p (\xi)\bigr)
&= \tangent_{\chi(p)} J \bigl(\tangent_p \chi (\theta^{-1}_p (\xi))\bigr)
= \tangent_{\chi(p)} J \bigl(\xi \ldot \chi(p)\bigr) = C_p \xi
\end{split}\end{equation}
for every \( \xi \in \LieA{a} \). For the second, 
\( \dif \theta = \Omega + \frac{1}{2} \wedgeLie{\theta}{\theta} \), 
where \( \Omega \) is the curvature of \( \theta \).	
Let \( X_1, X_2 \in \TBundle_p P \) and choose 
\( \xi_1, \xi_2 \in \LieA{a} \) such that 
\( \theta_p(X_i) = \xi_i \) for \( i = 1, 2 \). Then
\begin{equation}
\dif \theta (X_1, X_2)
= \Omega (X_1, X_2) + \commutator{\xi_1}{\xi_2}.
\end{equation}
Combining these two calculations, we find
\begin{equation}\begin{split}
(\dif &\alpha)_p (X_1, X_2) - 
\kappa_{\LieA{a}} \bigl(J(\chi(p)), \Omega_p (X_1, X_2)\bigr) \\
&= \kappa_{\LieA{a}} (\dif s \wedge \theta)_p (X_1, X_2) 
+ \kappa_{\LieA{a}}\bigl(s(p), (\dif \theta - 
\Omega)_p(X_1, X_2)\bigr) \\
&= \kappa_{\LieA{a}} \bigl(C_p \xi_1, \xi_2\bigr) - 
\kappa_{\LieA{a}} \bigl(C_p \xi_2, \xi_1\bigr) + 
\kappa_{\LieA{a}} \bigl(J(\chi(p)), 
\commutator{\xi_1}{\xi_2}\bigr) \\
&= 0 
\end{split}\end{equation}
due to the \( \LieA{a} \)-equivariance of \( J \).
\end{proof}
\begin{prop}
\label{prop:cartan:kempfNess:positive}
In the setting of \cref{i:cartan:kempfNess:existence}, the following holds:
\begin{thmenumerate}
\item
A point \( b \in B \) is a critical point of \( \Psi_m \) if and 
only if for some (and hence for all) \( p \in \pi^{-1}(b) \) the 
functional \( J(\chi(p)) \) vanishes when paired with elements 
of \( \LieA{a} \).
\item
For every \( \xi \in \LieA{a} \), let \( \gamma_\xi: [0, \delta] \to P \) 
be the integral curve of the vector field \( \theta^{-1}(\xi) \).
Assume that \( \pi\bigl(\gamma_\xi(0) \bigr) \) is a critical point 
of \( \Psi_m \). Then
\begin{equation}
\difFracAt[2]{}{t}{t=0} 
\Psi_m \Bigl(\pi\bigl(\gamma_\xi(t)\bigr)\Bigr)
= \kappa_{\LieA{a}} \bigl(C_p \xi, \xi\bigr).
\end{equation}
\item
\label{prop:cartan:kempfNessConvexity}
In addition, assume that we are in the setting of \cref{prop:cartan:momentumMapEquivarianceComplexCase} so that
we consider an orbit through a point \(m\) in an almost complex manifold \( (M,j) \) and 
that \( \commutator{\LieA{g}}{\I\LieA{m}} \subseteq \I \LieA{m} \). 
We also assume that 
the almost complex structure \( j \) is compatible with the symplectic form \( \omega \), \ie, \( g(\cdot, \cdot) = \omega(\cdot, j \cdot) \) is a Riemannian metric.
Then:
\begin{equation}\label{eq:cartan:kempfNess:positive}
\difFracAt[2]{}{t}{t=0} 
\Psi_m \Bigl(\pi\bigl(\gamma_\xi(t)\bigr)\Bigr)
= - \kappa \bigl(C_p (\I \Im \xi), \Im \xi\bigr)
= \norm*{(\Im \xi) \ldot \chi(p)}^2,
\end{equation}
where the norm is taken with respect to \( g \).
		
Moreover, if \( \xi \in \I \LieA{m} \), then \( \Psi_m \) is convex 
along \( \pi \circ \gamma_\xi \):
\begin{equation}
\label{eq:cartan:kempfNess:convexityAlongGeodesics}
\difFracAt[2]{}{t}{t} 
\Psi_m \Bigl(\pi\bigl(\gamma_\xi(t)\bigr)\Bigr) 
= \norm*{\xi \ldot \chi\bigl(\gamma_\xi(t)\bigr)}^2.
				\qedhere
\end{equation}
\end{thmenumerate}
\end{prop}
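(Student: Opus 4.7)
The plan rests on the identity $\check\alpha = \dif \Psi_m$ established in \cref{prop:cartan:kempfNess}, where $\alpha_p(X) = \kappa_\LieA{a}(J(\chi(p)), \theta_p(X))$ is horizontal and $G$-invariant. For~(i), since $\theta_p: \TBundle_p P \to \LieA{a}$ is an isomorphism, $\dif \Psi_m$ vanishes at $b = \pi(p)$ if and only if $\alpha_p$ vanishes on all of $\TBundle_p P$, which is equivalent to the vanishing of $\kappa_\LieA{a}(J(\chi(p)), \cdot)$ on $\LieA{a}$. The $G$-invariance of $\alpha$ ensures that this condition does not depend on the chosen lift $p \in \pi^{-1}(b)$.

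For~(ii), I set $f(t) \defeq \Psi_m(\pi(\gamma_\xi(t)))$. Using the chain rule together with $\theta_{\gamma_\xi(t)}(\gamma_\xi'(t)) = \xi$, which comes from the defining property of $\gamma_\xi$ as the integral curve of $\theta^{-1}(\xi)$, I obtain
\begin{equation}
	f'(t) = \alpha_{\gamma_\xi(t)}(\gamma_\xi'(t)) = \kappa_\LieA{a}\bigl(J(\chi(\gamma_\xi(t))), \xi\bigr).
\end{equation}
Differentiating once more and invoking $\tangent \chi(\gamma_\xi'(t)) = \xi \ldot \chi(\gamma_\xi(t))$ from~\eqref{eq:cartan:infAction} together with the definition of the Calabi operator~\eqref{eq:cartan:calabiOperator}, I get $f''(t) = \kappa_\LieA{a}(C_{\gamma_\xi(t)}\xi, \xi)$, from which the claim follows at $t = 0$.

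For~(iii), decompose $\xi = \xi_1 + \I\xi_2$ with $\xi_1 \in \LieA{g}$ and $\xi_2 \in \LieA{m}$. The definition of $\kappa_\LieA{a}$ and the complex linearity of $\rho$ expand the expression from~(ii) as
\begin{equation}
	\kappa_\LieA{a}(C_p \xi, \xi) = -\kappa(C_p \xi_1, \xi_2) - \kappa\bigl(C_p(\I\xi_2), \xi_2\bigr).
\end{equation}
The hard part is to show that the cross term $\kappa(C_p \xi_1, \xi_2)$ vanishes at a critical point. I apply the $\LieA{a}$-equivariance identity~\eqref{eq:cartan:momentumMapNonEquivariance:calabi} to the pair $(\xi_1, \I\xi_2)$: the contribution $\kappa_\LieA{a}(C_p \I\xi_2, \xi_1)$ vanishes because $\xi_1 \in \LieA{g}$ has zero imaginary component (which is all that $\kappa_\LieA{a}$ sees), while the bracket $\commutator{\xi_1}{\I\xi_2} = \I\commutator{\xi_1}{\xi_2}$ lies in $\I\LieA{m}$ thanks to the assumption $\commutator{\LieA{g}}{\I\LieA{m}} \subseteq \I\LieA{m}$. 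The identity thus reduces to $\kappa(C_p \xi_1, \xi_2) = -\kappa(J(\chi(p)), \commutator{\xi_1}{\xi_2})$, which vanishes by part~(i) since $\commutator{\xi_1}{\xi_2} \in \LieA{m}$. The remaining term is rewritten using the momentum map relation~\eqref{momentum1} and the $j$-invariance of $g$:
\begin{equation}
	-\kappa\bigl(C_p(\I\xi_2), \xi_2\bigr) = \omega\bigl(\xi_2 \ldot \chi(p), j \xi_2 \ldot \chi(p)\bigr) = \norm*{\xi_2 \ldot \chi(p)}^2.
\end{equation}

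For the final convexity claim, $\xi \in \I\LieA{m}$ forces $\xi_1 = 0$, so the cross term is absent at every point along $\gamma_\xi$, and the general formula $f''(t) = \kappa_\LieA{a}(C_{\gamma_\xi(t)}\xi, \xi)$ from~(ii) applied pointwise yields~\eqref{eq:cartan:kempfNess:convexityAlongGeodesics}. The main conceptual obstacle is the vanishing of the cross term at a critical point in~(iii), which crucially combines three ingredients used in tandem: the $\LieA{a}$-equivariance of $J$, the characterization of critical points from~(i), and the bracket compatibility $\commutator{\LieA{g}}{\I\LieA{m}} \subseteq \I\LieA{m}$ of the splitting $\LieA{a} = \LieA{g} \oplus \I\LieA{m}$.
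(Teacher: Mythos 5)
Your proposal is correct and follows essentially the same route as the paper: part (i) from \( \check{\alpha} = \dif \Psi_m \), part (ii) by differentiating \( \kappa_{\LieA{a}}\bigl(J(\chi(\gamma_\xi(t))), \xi\bigr) \) via the Calabi operator, and part (iii) by splitting \( \xi = \xi_1 + \I \xi_2 \) and killing the cross term using criticality together with \( \commutator{\LieA{g}}{\I\LieA{m}} \subseteq \I\LieA{m} \). The only (harmless) variation is that you obtain \( \kappa(C_p \xi_1, \xi_2) = -\kappa\bigl(J(\chi(p)), \commutator{\xi_1}{\xi_2}\bigr) \) directly from the \( \LieA{a} \)-equivariance identity applied to the pair \( (\xi_1, \I\xi_2) \), whereas the paper quotes the infinitesimal equivariance relation \( C_p \xi_1 = -\CoadAction_{\xi_1} J(\chi(p)) \); both lead to the same cancellation.
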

In the classical case \( P = G^c \to B \defeq G^c \slash G \), every 
tangent vector at the identity coset \( \equivClass{e} \) is 
identified with an element of \( \I \LieA{g} \).
So the condition that \( \xi \) has to be imaginary in the last 
statement is no real restriction and the proposition entails 
that \( \Psi_m \) is convex along the geodesics in 
\( G^c \slash G \) in arbitrary directions.
In \cref{sec:cartan:geodesics}, we will discuss the geodesics 
in the base of a Cartan geometry in more detail and will see 
that the Kempf--Ness function is convex along geodesics.
\begin{proof}
The first part follows from the fact that 
\( \dif \Psi_m = \check{\alpha} \) and from the 
formula~\eqref{eq:cartan:kempfNess:alpha} 
for \( \alpha \).

For the second part, let \( \xi \in \LieA{a} \) and let 
\( \gamma_\xi \) be an integral curve of \( \theta^{-1}(\xi) \).
Then
\begin{equation}\label{eq:cartan:kempfNess:firstDerivativeGeneral}
\begin{split}
\difFrac{}{t} \Psi_m \Bigl(\pi\bigl(\gamma_\xi(t)\bigr)\Bigr)
&= \tangent_{\pi(\gamma_\xi(t))} \Psi_m \bigl(
\tangent_{\gamma_\xi(t)} \pi (\dot{\gamma}_\xi(t))\bigr)\\
&= \alpha_{\gamma_\xi(t)} \bigl(\dot{\gamma}_\xi(t)\bigr)\\
&= \kappa_{\LieA{a}} \Bigl(J\bigl(\chi(\gamma_\xi(t))\bigr), 
\theta_{\gamma_\xi(t)} \bigl(\dot{\gamma}_\xi(t)\bigr)\Bigr)\\
&= \kappa_{\LieA{a}} \bigl(J(\chi(\gamma_\xi(t))), \xi\bigr).
\end{split}\end{equation}
Using the definition~\eqref{eq:cartan:calabiOperator} of the 
Calabi operator, we conclude
\begin{equation}
\label{eq:cartan:kempfNess:secondDerivativeGeneral}
\difFracAt[2]{}{t}{t} 
\Psi_m \Bigl(\pi\bigl(\gamma_\xi(t)\bigr)\Bigr)
= \kappa_{\LieA{a}} \bigl(C_{\gamma_\xi(t)} \xi, \xi\bigr).
\end{equation}
For the last part, assume that \( \gamma_\xi(0) = p \) projects 
onto a critical point of \( \Psi_m \) and that we are in the 
setting of \cref{prop:cartan:momentumMapEquivarianceComplexCase}.
Decomposing \( \xi = \xi_1 + \I \xi_2 \) with \( \xi_1 \in \LieA{g} \) 
and \( \xi_2 \in \LieA{m} \), we have
\begin{equation}
\kappa_{\LieA{a}} \bigl(C_p \xi, \xi\bigr)
= - \kappa\bigl(C_p \xi_1, \xi_2\bigr) - \kappa\bigl(C_p (\I \xi_2), \xi_2\bigr)
\end{equation}
Then, using the first part of the proposition, \( J(\chi(p)) \) 
vanishes when paired with \( \LieA{m} \) under \( \kappa \).
This implies that the first summand vanishes since 
\( C_p \xi_1 = - \CoadAction_{\xi_1} J(\chi(p)) \) and the 
commutator \( \commutator{\xi_1}{\xi_2} \) lies in 
\( \LieA{m} \) by assumption.
As we have seen in~\eqref{eq:cartan:momentumMapNonEquivariance:CpOnI},
\( \kappa\bigl( C_p (\I \xi_2), \xi_2\bigr) = 
- \omega\bigl(\xi_2 \ldot \chi(p), j \xi_2 \ldot \chi(p)\bigr)
= -\norm*{\xi_2 \ldot \chi(p)}^2 \), which proves the second equality 
in~\eqref{eq:cartan:kempfNess:positive}, \cf also 
\parencite[Eq.~3.32]{DiezRatiuSymplecticConnections}.
Finally,~\eqref{eq:cartan:kempfNess:convexityAlongGeodesics} 
follows directly from~\eqref{eq:cartan:kempfNess:secondDerivativeGeneral} 
by similar arguments.
\end{proof}
\begin{remark}\label{Bourg}
If the closed \( 1 \)-form \( \alpha \) defined \cref{prop:cartan:kempfNess}, 
satisfies \(\difLie_{\zeta^*} \alpha = 0\) for all 
\(\zeta \in \LieA{a}_m\) then one directly obtains a different 
proof of \cref{prop:cartan:constFutaki}.
This line of thought goes back at least to the work of Bourguignon. 
The invariance of \( \alpha \) holds in the K\"ahler examples 
due to the properties of the curvature.
However, in our abstract setting, 
\(\difLie_{\zeta^*} \alpha = 0\) follows instead from  
\cref{prop:cartan:constFutaki,prop:cartan:kempfNess}.
\end{remark}
\begin{prop}\label{auto}
In the setting of \cref{prop:cartan:momentumMapEquivarianceComplexCase}, 
assume that the Lie algebra action of \( \LieA{a}_m \) is integrated 
to an action of a regular\footnotemark{} Lie group \( Z \) on 
\( P \) and that \( \ker \tangent_p \chi = p \ldot \LieA{a}_m \).
\footnotetext{Roughly speaking, a Lie group \( G \) is called 
\emphDef{regular} if every curve in its Lie algebra integrates 
to an evolution curve in \( G \). We refer to 
\parencite[Section~38]{KrieglMichor1997} 
or \parencite[Section~III]{Neeb2006} for a precise definition.}
	Let \( \gamma_\xi: [0, 1] \to P \) be an integral curve of \( \theta^{-1}(\xi) \) for \( \xi \in \I \LieA{m} \).
	If \( \gamma_\xi(0) \) and \( \gamma_\xi(1) \) are zeros of \( J \circ \chi \), then there exists \( z \in Z \) such that \( \gamma_\xi(0) = \gamma_\xi(1) \cdot z \). 
\end{prop}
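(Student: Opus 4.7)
The plan is to exploit the convexity of the Kempf--Ness function along the geodesic \( \pi \circ \gamma_\xi \) from \cref{prop:cartan:kempfNessConvexity} to force \( \gamma_\xi \) to lie in a single \( \chi \)-fiber, and then to integrate the resulting tangency condition using the regularity of \( Z \). Concretely, consider the smooth function \( f(t) \defeq \Psi_m\bigl(\pi(\gamma_\xi(t))\bigr) \) on \( [0,1] \). Because \( J\bigl(\chi(\gamma_\xi(k))\bigr) = 0 \) for \( k = 0, 1 \), part (i) of \cref{prop:cartan:kempfNess:positive} identifies the endpoints with critical points of \( \Psi_m \); combined with~\eqref{eq:cartan:kempfNess:firstDerivativeGeneral}, this gives \( f'(0) = f'(1) = 0 \). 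Since \( \xi \in \I \LieA{m} \), formula~\eqref{eq:cartan:kempfNess:convexityAlongGeodesics} yields \( f''(t) = \norm*{\xi \ldot \chi(\gamma_\xi(t))}^2 \geq 0 \), so \( f \) is convex on \( [0,1] \). A convex \( C^2 \) function on an interval whose derivative vanishes at both endpoints must be constant; hence \( f'' \equiv 0 \), and \( \xi \ldot \chi(\gamma_\xi(t)) = 0 \) for every \( t \in [0,1] \).

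Next, I would translate this vanishing into a geometric tangency statement. Unraveling definition~\eqref{eq:cartan:infAction} together with \( \dot\gamma_\xi(t) = \theta^{-1}_{\gamma_\xi(t)}(\xi) \) gives \( \tangent_{\gamma_\xi(t)} \chi(\dot\gamma_\xi(t)) = 0 \), and the hypothesis \( \ker \tangent_p \chi = p \ldot \LieA{a}_m \) then places \( \dot\gamma_\xi(t) \) in \( \gamma_\xi(t) \ldot \LieA{a}_m \). Because the map \( \LieA{a}_m \to \TBundle_p P \), \( \zeta \mapsto p \ldot \zeta \), is a linear isomorphism onto the vertical subspace \( \ker \tangent_p \chi \) at every \( p \), one can solve uniquely for a smooth curve \( \eta: [0,1] \to \LieA{a}_m \) with \( \dot\gamma_\xi(t) = \gamma_\xi(t) \ldot \eta(t) \).

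Finally, I would invoke regularity of \( Z \) to integrate \( \eta \) to a smooth evolution \( z: [0,1] \to Z \) with \( z(0) = e \); by construction, the curve \( t \mapsto \gamma_\xi(0) \cdot z(t) \) solves the time-dependent ODE \( \dot\gamma(t) = \gamma(t) \ldot \eta(t) \) with initial condition \( \gamma(0) = \gamma_\xi(0) \), and so does \( \gamma_\xi \) itself; uniqueness forces the two to agree. Evaluating at \( t = 1 \) yields \( \gamma_\xi(1) = \gamma_\xi(0) \cdot z(1) \), and hence \( \gamma_\xi(0) = \gamma_\xi(1) \cdot z(1)^{-1} \), which is the claim with \( z \defeq z(1)^{-1} \). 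The main obstacle here is this final integration step: one must align the right-action conventions so that the \( Z \)-orbit through \( \gamma_\xi(0) \) genuinely satisfies the correct time-dependent ODE, and one must appeal to regularity of \( Z \) (in the sense of \parencite{KrieglMichor1997,Neeb2006}) to guarantee both the existence of the smooth evolution \( z(t) \) and uniqueness of ODE solutions in the infinite-dimensional setting.
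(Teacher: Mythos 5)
Your overall strategy is exactly the one the paper follows: use \cref{prop:cartan:kempfNess:positive} and the endpoint condition \( J(\chi(\gamma_\xi(k))) = 0 \) to get vanishing derivatives of \( \Psi_m \circ \pi \circ \gamma_\xi \) at \( t = 0, 1 \), invoke convexity~\eqref{eq:cartan:kempfNess:convexityAlongGeodesics} to force \( \xi \ldot \chi(\gamma_\xi(t)) = 0 \) for all \( t \), use the hypothesis \( \ker \tangent_p \chi = p \ldot \LieA{a}_m \) to write \( \dot\gamma_\xi(t) = \gamma_\xi(t) \ldot \zeta(t) \) with \( \zeta(t) \in \LieA{a}_m \), and then bring in regularity of \( Z \). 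Up to that point your argument matches the paper's.

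The gap is in the very last step, and it is the one you yourself flag: you conclude \( \gamma_\xi(t) = \gamma_\xi(0) \cdot z(t) \) by saying that both curves solve the time-dependent ODE \( \dot\gamma(t) = \gamma(t) \ldot \eta(t) \) with the same initial condition and appealing to ``uniqueness of ODE solutions in the infinite-dimensional setting.'' Regularity of \( Z \) gives you existence (and uniqueness) of the evolution curve \( z(t) \) \emph{in the group \( Z \)} for the smooth curve \( \eta \) in its Lie algebra, but it gives no uniqueness statement for solutions of ODEs on the manifold \( P \); in the Fr\'echet setting relevant here (\eg \( P = \SectionSpaceAbb{P} \subset \DiffGroup(M) \)) Picard--Lindel\"of fails and such uniqueness is simply not available in general. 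The paper sidesteps this entirely: it chooses \( z(t) \) with \( z(0) = e \) and \( \zeta(t) \ldot z(t) = -\dot z(t) \), differentiates the single curve \( t \mapsto \gamma_\xi(t) \cdot z(t) \), and checks that its derivative vanishes identically, so the curve is constant and \( \gamma_\xi(0) = \gamma_\xi(1) \cdot z(1) \); a curve with identically zero derivative being constant needs no ODE theory. Your proof becomes complete if you replace the uniqueness appeal by this direct computation (equivalently, differentiate \( \gamma_\xi(t) \cdot z(t)^{-1} \) with your choice of \( z \)). A minor additional point: the hypothesis only gives surjectivity of \( \zeta \mapsto p \ldot \zeta \) onto \( \ker \tangent_p \chi \), not that it is an isomorphism, so ``solve uniquely'' is an overstatement; what one actually needs (and what the paper also uses implicitly) is a smooth choice of \( \zeta(t) \), which should be acknowledged rather than derived from injectivity.
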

\begin{proof}
Since \( J \circ \chi \) vanishes at the end points of 
\( \gamma_\xi \), \cref{eq:cartan:kempfNess:firstDerivativeGeneral} 
implies that \( \difFracAt{}{t}{t=0} \Psi_m \Bigl(
\pi\bigl(\gamma_\xi(t)\bigr)\Bigr) = 0 = 
\difFracAt{}{t}{t=1} \Psi_m \Bigl(\pi\bigl(
\gamma_\xi(t)\bigr)\Bigr) \).
Moreover, \( \Psi_m \) is convex along \( \pi \circ \gamma_\xi \) 
by \cref{prop:cartan:kempfNessConvexity}.
But a convex function with vanishing derivatives at the end points 
has vanishing derivative everywhere.
Hence, using~\eqref{eq:cartan:kempfNess:convexityAlongGeodesics}, 
we get \( \xi \ldot \chi\bigl(\gamma_\xi(t) \bigr) = 0 \) for 
all \( t \in [0, 1] \). Equivalently,
\begin{equationcd}
0 = \xi \ldot \chi\bigl(\gamma_\xi(t)\bigr)
= \tangent_{\gamma_\xi(t)} \chi \Bigl(\theta^{-1}_{\gamma_\xi(t)} (\xi)\Bigr).
\end{equationcd}
By assumption, there exists a curve \( \zeta(t) \in \LieA{a}_m \) 
with \( \gamma_\xi(t) \ldot \zeta(t) = 
\theta^{-1}_{\gamma_\xi(t)} (\xi) \).
Since \( Z \) is a regular Lie group, we can find a curve 
\( z(t) \in Z \) starting at the identity with 
\( \zeta(t) \ldot z(t) = -\dot{z}(t) \). Then
\begin{equation}\begin{split}
\difFracAt{}{t}{t}  
\bigl( \gamma_\xi(t) \cdot z(t) \bigr)
&= \dot{\gamma}_\xi(t) \ldot z(t) + 
\gamma_\xi(t) \ldot \dot{z}(t)\\
&= \theta^{-1}_{\gamma_\xi(t)} (\xi) \ldot z(t) - 
\gamma_\xi(t) \ldot \zeta(t) \ldot z(t)\\
&= \theta^{-1}_{\gamma_\xi(t)} (\xi) \ldot z(t) - 
\theta^{-1}_{\gamma_\xi(t)} (\xi) \ldot z(t)\\
&= 0.
\end{split}\end{equation}
Hence \( \gamma_\xi(0) = \gamma_\xi(1) \cdot z(1) \).
\end{proof}

\subsection{Geodesics}
\label{sec:cartan:geodesics}

A Klein pair \( (\LieA{a}, \LieA{g}) \) with group \( G \) is called 
\emphDef{reductive} if there exists a \( G \)-invariant topological
complement \( \LieA{m} \) of \( \LieA{g} \) in \( \LieA{a} \), \ie, 
\( \LieA{a} = \LieA{g} \oplus \LieA{m} \) as a topological vector space 
and \( \AdAction_g \LieA{m} \subseteq \LieA{m} \) for all \( g \in G \).
In particular, \( \commutator{\LieA{g}}{\LieA{m}} \subseteq \LieA{m} \).
For a Cartan geometry \( (P, \theta) \) modeled on a reductive Klein 
pair \( (\LieA{a}, \LieA{g}) \), the Cartan connection \( \theta \) 
decomposes as \( \theta = \theta_\LieA{g} + \theta_\LieA{m} \) 
where \( \theta_\LieA{g} \) and \( \theta_\LieA{m} \) take values 
in \( \LieA{g} \) and \( \LieA{m} \), respectively.
One easily checks that \( \theta_\LieA{g} \) is an ordinary 
principal connection on the \( G \)-bundle \( P \) and 
\( \theta_\LieA{m} \) is a \( G \)-equivariant and horizontal 
\( 1 \)-form on \( P \) (with values in \( \LieA{m} \)).
Thus, we may regard \( \theta_\LieA{m} \) also as a \( 1 \)-form 
on the base manifold \( B \) of \( P \) with values in 
\( P \times_G \LieA{m} \). Moreover, the bundle isomorphism 
\( \TBundle B \isomorph P \times_G (\LieA{a} \slash \LieA{g}) \) 
mentioned above takes the simple form 
\begin{equation}
\label{eq:cartan:reductiveBundleIso}
\TBundle B \ni (b, Z) \mapsto \equivClass*{p, \theta_\LieA{m}(\hat{Z}_p)} 
\in P \times_G \LieA{m},
\end{equation}
where \( p \in \pi^{-1}(b) \) and \( \hat{Z}_p \in \TBundle_P P \) 
is any lift of \( Z \in \TBundle_b B \). Note that this is just 
\( \theta_\LieA{m} \in \DiffFormSpace^1(B, P \times_G \LieA{m}) \) 
in disguise. Due to this property, \( \theta_\LieA{m} \) is called 
the \emphDef{solder form}.

If \( B \) is finite-dimensional (equivalently, \( \LieA{m} \) 
is finite-dimensional) and the representation 
\( \AdAction: G \to \GLGroup(\LieA{m}) \) is injective, 
then we can identify \( P \) with the frame bundle of \( B \) 
and \( \theta_\LieA{m} \) with the tautological form, see 
\parencite[Appendix~A.2]{Sharpe1997}.
In fact, a choice of basis of \( \LieA{m} \) yields a 
correspondence between local sections of \( P \) and local 
frames of \( B \) via~\eqref{eq:cartan:reductiveBundleIso}.
Without the injectivity assumption on the representation, 
multiple sections of \( P \) may correspond to the same frame 
of \( B \). It is not obvious how to generalize this identification 
to the infinite-dimensional setting, and, in fact, even the notion 
of a frame bundle is not clear in this setting.
Changing the perspective, we may however use this identification 
as the raison d'\^etre for the following definition.
\begin{defn}
Let \( B \) be an infinite-dimensional manifold and \( G \) 
an infinite-dimensional Lie group.
A \emphDef{\( G \)-structure} on \( B \) is a Cartan geometry 
\( P \to B \) modeled on a reductive Klein pair with group \( G \).
\end{defn}
Many classical concepts from the finite-dimensional theory 
of \( G \)-structures carry over to this setting when reformulated 
in terms of Cartan geometries.
In particular, the principal connection \( \theta_{\LieA{g}} \) 
induces a connection on every vector bundle associated with 
\( P \) (in infinite dimensions, it is perhaps best to formulate 
this in terms of the so-called connector, see 
\parencite[Section~37.26]{KrieglMichor1997}).
In particular, we obtain an affine connection on the tangent 
bundle \( \TBundle B \) via the bundle 
isomorphism~\eqref{eq:cartan:reductiveBundleIso}, and thus 
it makes sense to talk about geodesics in \( B \).
Explicitly, the \emphDef{covariant derivative} 
\( \nabla_{\dot{\gamma}} X \) of a vector field \( X \) 
along a curve \( \gamma: I \to B \) is defined by
\begin{equation}
\label{eq:cartan:covariantDerivative}
\theta_{\LieA{m}} \Bigl( \nabla_{\dot{\gamma}} X (t) \Bigr) 
= \iota_p \Bigl( \tangent_p \tilde{X} (\hat{\gamma}_p) \Bigr), 
\end{equation}
where \( p \in \pi^{-1}(\gamma(t)) \) and \( \hat{\gamma}_p \in 
\TBundle_p P \) is the horizontal lift of 
\( \dot{\gamma}(t) \in \TBundle_{\gamma(t)} B \).
Moreover, \( \tilde{X}: \gamma^* P \to \LieA{m} \) satisfying 
\( \equivClass*{p, \tilde{X}(t, p)} = \theta_{\LieA{m}} (X(t)) \) 
is the \( G \)-equivariant map corresponding to \( X \) under 
the isomorphism~\eqref{eq:cartan:reductiveBundleIso}.
We call \( \gamma \) a \emphDef{geodesic} if 
\( \nabla_{\dot{\gamma}} \dot{\gamma} = 0 \).
As in the finite-dimensional case, we have a close relationship 
between geodesics and the flows of the \( \theta \)-constant 
vector fields on \( P \).
\begin{prop}
\label{prop:cartan:geodesicsAsProjections}
Let \( \pi: P \to B \) be a \( G \)-structure on \( B \).
Assume that, for every \( \xi \in \LieA{m} \), the horizontal 
vector field \( \theta^{-1}(\xi) \) on \( P \) has a smooth local 
flow \( \flow^\xi_t \).
Then, for every \( \equivClass{p, \xi} \in P \times_G \LieA{m} \), 
the curve \( t \mapsto \pi \bigl(\flow^\xi_t(p)\bigr) \) is a geodesic 
in \( B \). Conversely, if \( G \) is regular, then every geodesic 
in \( B \) is of this form for a unique 
\( \equivClass{p, \xi} \in P \times_G \LieA{m} \).
\end{prop}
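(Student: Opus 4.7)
The strategy is to translate the geodesic equation into a statement about the \( \LieA{m} \)-component of \( \theta \) along a horizontal lift of \( \gamma \). Under the bundle isomorphism~\eqref{eq:cartan:reductiveBundleIso}, a horizontal lift \( \bar\gamma_P \) of a smooth curve \( \gamma \) identifies the velocity vector field \( \dot\gamma \) with the \( \LieA{m} \)-valued curve \( \xi(t) \defeq \theta_\LieA{m}(\dot{\bar\gamma}_P(t)) \), and the covariant-derivative formula~\eqref{eq:cartan:covariantDerivative} reduces to the equality \( \theta_\LieA{m}(\nabla_{\dot\gamma}\dot\gamma(t)) = \equivClass{\bar\gamma_P(t), \dot\xi(t)} \). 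Hence \( \gamma \) is a geodesic precisely when \( \xi \) is constant, and this is exactly the condition that \( \bar\gamma_P \) is an integral curve of the \( \theta \)-constant vector field \( \theta^{-1}(\xi_0) \) for some \( \xi_0 \in \LieA{m} \).

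For the forward direction, I would fix \( \equivClass{p,\xi} \in P \times_G \LieA{m} \), put \( \gamma_P(t) \defeq \flow^\xi_t(p) \) and \( \gamma \defeq \pi \circ \gamma_P \). By definition of the flow, \( \dot\gamma_P(t) = \theta^{-1}_{\gamma_P(t)}(\xi) \); since \( \xi \in \LieA{m} \), this shows that \( \gamma_P \) is horizontal with \( \theta_\LieA{m}(\dot\gamma_P(t)) = \xi \) constant in \( t \). The \( G \)-equivariant representative of \( \dot\gamma \) along \( \gamma_P \) is then just the constant \( \xi \), so applying~\eqref{eq:cartan:covariantDerivative} at \( p = \gamma_P(t) \) with the horizontal lift \( \hat\gamma_p = \dot\gamma_P(t) \) yields \( \nabla_{\dot\gamma}\dot\gamma = 0 \) without further calculation.

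For the converse, given a geodesic \( \gamma:I\to B \) with \( 0 \in I \) and \( \gamma(0)=b_0 \), I would pick any \( p_0 \in \pi^{-1}(b_0) \) and construct a horizontal lift \( \bar\gamma_P \) of \( \gamma \) based at \( p_0 \). With \( \xi(t) \defeq \theta_\LieA{m}(\dot{\bar\gamma}_P(t)) \), the first paragraph's computation shows that \( \nabla_{\dot\gamma}\dot\gamma = 0 \) forces \( \dot\xi \equiv 0 \); hence \( \xi \equiv \xi_0 \) is constant and \( \bar\gamma_P \) coincides with the integral curve of \( \theta^{-1}(\xi_0) \) based at \( p_0 \), yielding \( \gamma(t) = \pi(\flow^{\xi_0}_t(p_0)) \). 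Uniqueness of \( \equivClass{p_0,\xi_0} \) follows because this class equals the image of \( \dot\gamma(0) \) under the bijection~\eqref{eq:cartan:reductiveBundleIso}, while the \( G \)-equivariance identity \( \flow^{\AdAction_g\xi_0}_t(g\cdot p_0) = g\cdot \flow^{\xi_0}_t(p_0) \) (itself a consequence of the equivariance of \( \theta \)) ensures that different representatives of the same class project to the same curve in \( B \). The main obstacle is the construction of the horizontal lift in the infinite-dimensional setting: it amounts to integrating a smooth \( \LieA{g} \)-valued curve, extracted from an arbitrary smooth local lift by applying \( \theta_\LieA{g} \), to a curve in \( G \); this is exactly the content of the regularity assumption on \( G \), which is why that hypothesis appears only in the converse half of the statement.
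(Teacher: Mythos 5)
Your proposal is correct and follows essentially the same route as the paper: projecting the flow of \( \theta^{-1}(\xi) \) gives a horizontal lift whose \( \LieA{m} \)-representative is constant, hence a geodesic via~\eqref{eq:cartan:covariantDerivative}, and conversely the regularity of \( G \) furnishes a horizontal lift along which the geodesic equation forces \( \theta_{\LieA{m}}(\dot{\bar\gamma}_P) \) to be constant, identifying the lift with an integral curve of a \( \theta \)-constant vector field. Your added remarks on uniqueness of \( \equivClass{p,\xi} \) and the equivariance of the flow only make explicit what the paper states briefly.
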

\begin{proof}
The proof is analogous to the finite-dimensional case, see 
\parencite[Proposition~III.6.3]{KobayashiNomizu1963} or 
\parencite[Proposition~2.1.22]{RudolphSchmidt2014}.

For \( \xi \in \LieA{m} \) and \( p \in P \), let 
\( \hat{\gamma}(t) = \flow^\xi_t(p) \) and \( \gamma(t) 
= \pi \bigl(\hat{\gamma}(t)\bigr) \) for \( t \in I \subseteq \R \) 
an interval containing \( 0 \). Note that \( \gamma \) depends 
only on the equivalence class \( \equivClass{p, \xi} \) and not 
on the choice of \( p \) and \( \xi \).
Moreover, \( \hat{\gamma} \) is a horizontal lift of \( \gamma \)
with \( \difFrac{}{t} \hat{\gamma}(t) = 
\theta^{-1}_{\hat{\gamma}(t)}(\xi) \).
Hence, the map \( \gamma^* P \to \LieA{m} \) corresponding to 
\( \dot{\gamma} \) is constant along horizontal curves.
Thus, by~\eqref{eq:cartan:covariantDerivative}, 
\( \nabla_{\dot{\gamma}} \dot{\gamma} = 0 \), so 
\( \gamma \) is a geodesic.

Conversely, let \( \gamma: I \to B \) be a geodesic defined 
on an interval \( I \subseteq \R \) containing \( 0 \).
Since \( G \) is regular, for every \( p \in \pi^{-1}(\gamma(0)) \), 
there exists a unique horizontal lift \( \hat{\gamma}: I \to P \) 
of \( \gamma \) with \( \hat{\gamma}(0) = p \), see 
\parencite[Theorem~39.1]{KrieglMichor1997}.
Since \( \gamma \) is a geodesic, we have 
\( \theta_{\LieA{m}}(\dot{\gamma}(t)) = 
\equivClass{\hat{\gamma}(t), \xi} \) for some constant 
\( \xi \in \LieA{m} \). Thus, \( \hat{\gamma} \) is an 
integral curve of \( \theta^{-1}(\xi) \).
\end{proof}

If \( P \to B \) is a Cartan geometry, not necessarily modeled 
on a reductive Klein pair, then we can still consider the 
projection of integral curves \( \gamma_\xi \) of the vector fields 
\( \theta^{-1}(\xi) \) for \( \xi \in \LieA{a} \).
Such curves are called \emphDef{generalized geodesics} 
or generalized circles, see \parencite[Definition~5.4.16]{Sharpe1997}.
Reformulated in this terminology, \cref{prop:cartan:kempfNess} 
expresses that the Kempf--Ness function is convex along generalized 
geodesics.

For a (generalized) geodesic ray \( \gamma_\xi \), we define 
its slope by
\begin{equation}
	\lim_{t \to \infty} \frac{\Psi_m(\pi(\gamma_\xi(t)))}{t}.
\end{equation}
In the K\"ahler setting, this definition appears in 
\Textcite[page 32, Conjecture/Question 12, (2)]{Donaldson1999a}. 
In view of the work of \Textcite{PhongSturm2007}, a test 
configuration gives rise to a geodesic and
the slope for this geodesic coincides with the Donaldson-Futaki 
invariant together with the Lelong
number of the central fiber. When the central fiber is reduced 
the Lelong number is $0$. So we may regard the slope as the 
``Mumford weight''. Thus, instead of ``slope'', one could call
it ``weight'', or ``Mumford weight'', or ``GIT weight''.

\begin{defn}
	\label{defn:cartan:stability}
A Cartan model for the complex orbit through \( m \in M \) 
is said to be stable (resp. semistable) if the slope of any 
geodesic ray is positive (resp. non-negative). A Cartan model for 
the complex orbit through \( m \in M \) is said to be 
unstable if it is not semistable.
\end{defn}

Since geodesics cannot be compactified, it is not straightforward 
how to define test configurations in our general framework. 
Instead, we might consider Tits buildings. A more comprehensive 
exploration of this idea is left for future work.
\smallskip

In many cases, the base manifold \( B \) of a Cartan geometry carries a 
natural Riemannian metric, so it is natural to ask whether the 
geodesics of this metric are the same as the Cartan geodesics.
Let \( (P, \theta) \) be a Cartan geometry modeled on a reductive 
Klein pair \( (\LieA{a}, \LieA{g}) \) with splitting \( \LieA{a} 
= \LieA{g} \oplus \LieA{m} \). We assume that \( \LieA{m} \) 
carries an \( \AdAction_G \)-invariant, positive-definite, 
symmetric bilinear form \( \Xi_{\LieA{m}}: \LieA{m} \times \LieA{m} 
\to \R \). These properties ensure that \( \Xi_{\LieA{m}} \) 
induces a (weak)\footnotemark{} Riemannian metric \( \Xi \) on 
\( B \) via the isomorphism \( \TBundle B \isomorph P \times_G 
\LieA{m} \) induced by the Cartan connection:
\footnotetext{The metric is weak in the sense that the induced 
musical map \( \sharp: \TBundle_b B \to \TBundle^*_b B \) is only 
injective and not necessarily an isomorphism. One complication 
for weak metrics is that the Koszul formula only yields uniqueness 
of the Levi-Civita connection, but not its existence.}
\begin{equation}
\label{eq:cartan:riemannianMetric}
\Xi_b (Z_1, Z_2) = \Xi_{\LieA{m}} \bigl(\theta_{\LieA{m}}(\hat{Z}_1), 
\theta_{\LieA{m}}(\hat{Z}_2)\bigr),
\end{equation}
where \( b \in B \), \( Z_1, Z_2 \in \TBundle_b B \),  
\( p \in \pi^{-1}(b) \), and \( \hat{Z}_i \in \TBundle_p P \) 
is a lift of \( Z_i \). The Cartan covariant derivative defined 
in~\eqref{eq:cartan:covariantDerivative} has, in general, a 
non-vanishing torsion and thus does not coincide with the 
Levi-Civita connection of \( \Xi \).
However, the geodesics of the Cartan connection and the Levi-Civita 
connection in fact coincide under natural assumptions.
\begin{prop}
\label{prop:cartan:riemannianConnection}
Let \( (P, \theta) \) be a Cartan geometry modeled on a reductive 
Klein pair \( (\LieA{a}, \LieA{g}) \) with splitting \( \LieA{a} 
= \LieA{g} \oplus \LieA{m} \).
Assume that \( \LieA{m} \) carries an \( \AdAction_G \)-invariant, 
positive-definite, symmetric bilinear form \( \Xi_{\LieA{m}}: 
\LieA{m} \times \LieA{m} \to \R \) and let \( \Xi \) be the 
induced Riemannian metric on \( B \).
Suppose that there exists a \( 1 \)-form \( \alpha \) on \( B \) 
with values in \( P \times_G \LieA{g} \) such that
\begin{equation}
\Omega_{\LieA{m}} + 
\frac{1}{2} \wedgeLie{\theta_{\LieA{m}}}{\theta_{\LieA{m}}}_{\LieA{m}} 
= \wedgeLie{\alpha}{\theta_{\LieA{m}}},
\end{equation}
where the subscript in the second term indicates that we take the \( \LieA{m} \)-component of the commutator.
Then the Levi-Civita connection \( \nabla^{\textrm{LC}} \) of 
\( \Xi \) is given by
\begin{equation}
\label{eq:cartan:riemannianConnection}
\theta_{\LieA{m}} \bigl(\nabla^{\textrm{LC}}_X Y\bigr)
= \theta_{\LieA{m}}\bigl(\nabla^{\theta}_X Y\bigr) - 
\commutator{\alpha(X)}{\theta_{\LieA{m}}(Y)},
\end{equation}
where \( \nabla^{\theta} \) is the Cartan covariant derivative 
associated with \( \theta \) as defined 
in~\eqref{eq:cartan:covariantDerivative}.
In particular, if \( \theta \) is torsion-free, then 
\begin{equation}
\label{eq:cartan:riemannianConnectionTorsionFree}
\theta_{\LieA{m}} \bigl(\nabla^{\textrm{LC}}_X Y\bigr) 
= \theta_{\LieA{m}} \bigl(\nabla^{\theta}_X Y\bigr) 
- \frac{1}{2}\commutator{\theta_{\LieA{m}}(X)}{\theta_{\LieA{m}}(Y)}_{\LieA{m}}
\end{equation}
and the geodesics of \( \nabla^{\textrm{LC}} \) and
\( \nabla^{\theta} \) coincide.
\end{prop}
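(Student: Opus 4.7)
The plan is to verify the two defining properties of the Levi-Civita connection — metric compatibility and vanishing torsion — for the candidate connection $\nabla'$ defined by the right-hand side of~\eqref{eq:cartan:riemannianConnection}, and then to invoke uniqueness. Since $\Xi$ is only a weak Riemannian metric, we lean on the fact pointed out in the footnote that Koszul's formula still gives uniqueness even when existence is not guaranteed a priori; producing a candidate with the required properties therefore suffices to identify it with $\nabla^{\textrm{LC}}$.

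For metric compatibility I would proceed in two pieces. First, $\nabla^{\theta}$ is induced by the principal connection $\theta_\LieA{g}$ and the $\AdAction_G$-invariant fiber metric $\Xi_\LieA{m}$ on $P \times_G \LieA{m}$, so it is automatically metric; this can be read off directly from the definition~\eqref{eq:cartan:covariantDerivative} and the invariance of $\Xi_\LieA{m}$ along horizontal lifts. Second, the correction term $-\commutator{\alpha(X)}{\theta_\LieA{m}(Y)}$ is pointwise the action of an element $\alpha(X) \in \LieA{g}$ on $\LieA{m}$ by the bracket; differentiating the $\AdAction_G$-invariance of $\Xi_\LieA{m}$ in the direction $\alpha(X)$ gives that this action is $\Xi_\LieA{m}$-skew, so the correction preserves the metric.

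The torsion computation is the step I expect to require the most care. The goal is to show that, as an $\LieA{m}$-valued $2$-form on $B$ read off via $\theta_\LieA{m}$,
\begin{equation}
\theta_\LieA{m}\bigl(T^{\nabla^\theta}(X,Y)\bigr) = \Omega_\LieA{m}(X,Y) + \tfrac{1}{2}\wedgeLie{\theta_\LieA{m}}{\theta_\LieA{m}}_\LieA{m}(X,Y).
\end{equation}
This follows by unwinding~\eqref{eq:cartan:covariantDerivative} on horizontal lifts and using the structure equation $\dif\theta = \Omega + \tfrac{1}{2}\wedgeLie{\theta}{\theta}$, together with the reductive splitting $\wedgeLie{\theta}{\theta}_\LieA{m} = 2\wedgeLie{\theta_\LieA{g}}{\theta_\LieA{m}} + \wedgeLie{\theta_\LieA{m}}{\theta_\LieA{m}}_\LieA{m}$ (noting that $\commutator{\LieA{g}}{\LieA{m}} \subseteq \LieA{m}$ by reductivity). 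The terms involving $\theta_\LieA{g}$ precisely reproduce the horizontal-lift bracket used in computing $\commutator{X}{Y}$, leaving the displayed identity. Combining this with the definition of $\nabla'$ gives
\begin{equation}
\theta_\LieA{m}\bigl(T^{\nabla'}(X,Y)\bigr) = \Omega_\LieA{m}(X,Y) + \tfrac{1}{2}\wedgeLie{\theta_\LieA{m}}{\theta_\LieA{m}}_\LieA{m}(X,Y) - \wedgeLie{\alpha}{\theta_\LieA{m}}(X,Y),
\end{equation}
which vanishes by hypothesis. Uniqueness of $\nabla^{\textrm{LC}}$ via Koszul then identifies $\nabla' = \nabla^{\textrm{LC}}$.

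For the final assertion, when $\theta$ is torsion-free we have $\Omega_\LieA{m} = 0$, and any $\alpha$ from the hypothesis satisfies $\wedgeLie{\alpha}{\theta_\LieA{m}} = \tfrac{1}{2}\wedgeLie{\theta_\LieA{m}}{\theta_\LieA{m}}_\LieA{m}$. Antisymmetrizing in $X,Y$ and using $\commutator{u}{u} = 0$ shows that the correction $-\tfrac{1}{2}\commutator{\theta_\LieA{m}(X)}{\theta_\LieA{m}(Y)}_\LieA{m}$ appearing in~\eqref{eq:cartan:riemannianConnectionTorsionFree} is antisymmetric in $X,Y$. Hence formulae~\eqref{eq:cartan:riemannianConnection} and~\eqref{eq:cartan:riemannianConnectionTorsionFree} differ only by an antisymmetric tensor, and either description of $\nabla^{\textrm{LC}}$ agrees with $\nabla^\theta$ on the symmetric part $\nabla_X X$. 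The geodesic equation $\nabla_{\dot\gamma}\dot\gamma = 0$ therefore coincides for $\nabla^{\textrm{LC}}$ and $\nabla^\theta$, which completes the proposition. The main obstacle is thus the torsion identity of the middle paragraph; once that bookkeeping is done, the rest is essentially formal.
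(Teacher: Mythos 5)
Your treatment of the main formula~\eqref{eq:cartan:riemannianConnection} is correct and is essentially the paper's argument: exhibit a connection that is metric and torsion-free and invoke uniqueness for the weak metric \(\Xi\). The only difference is packaging — the paper introduces the modified Cartan connection \(\bar{\theta} = \theta_{\LieA{g}} + \alpha + \theta_{\LieA{m}}\) and checks metricity with \(\bar{\theta}\)-horizontal lifts, whereas you keep \(\nabla^{\theta}\) and use the infinitesimal form of the \(\AdAction_G\)-invariance of \(\Xi_{\LieA{m}}\) to see that \(-\commutator{\alpha(X)}{\cdot}\) is \(\Xi_{\LieA{m}}\)-skew; both rest on the same torsion identity \(\tau = \Omega_{\LieA{m}} + \frac{1}{2}\wedgeLie{\theta_{\LieA{m}}}{\theta_{\LieA{m}}}_{\LieA{m}}\), which you derive correctly on horizontal lifts.

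Your final paragraph, however, has a genuine gap. In the torsion-free case the hypothesis only fixes the antisymmetrization of \((X,Y)\mapsto\commutator{\alpha(X)}{\theta_{\LieA{m}}(Y)}\), namely \(\commutator{\alpha(X)}{\theta_{\LieA{m}}(Y)} - \commutator{\alpha(Y)}{\theta_{\LieA{m}}(X)} = \commutator{\theta_{\LieA{m}}(X)}{\theta_{\LieA{m}}(Y)}_{\LieA{m}}\), and nothing more. Hence the corrections appearing in~\eqref{eq:cartan:riemannianConnection} and~\eqref{eq:cartan:riemannianConnectionTorsionFree} differ by a \emph{symmetric} tensor (their antisymmetric parts agree), not by an antisymmetric one as you assert — and it is precisely symmetric differences that change \(\nabla_X X\) and therefore the geodesics. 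What is actually needed to pass from~\eqref{eq:cartan:riemannianConnection} to~\eqref{eq:cartan:riemannianConnectionTorsionFree} and to conclude \(\nabla^{\textrm{LC}}_X X = \nabla^{\theta}_X X\) is the vanishing of that symmetric part, \ie, \(\commutator{\alpha(X)}{\theta_{\LieA{m}}(X)} = 0\), equivalently a natural-reductivity-type condition \(\Xi_{\LieA{m}}\bigl(\commutator{u}{v}_{\LieA{m}}, v\bigr) = 0\); this does not follow from the mere existence of a \(\LieA{g}\)-valued \(\alpha\) satisfying the hypothesis. The paper secures it only by the formal choice \(\alpha = \frac{1}{2}\theta_{\LieA{m}}\) (note this \(\alpha\) is \(\LieA{m}\)-valued, so it already sits outside the stated hypothesis), and in the intended applications the point is harmless because there \(\commutator{\LieA{m}}{\LieA{m}} \subseteq \LieA{g}\), so one may take \(\alpha = 0\) and the correction term disappears altogether. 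As written, though, your argument does not establish the final claim; you should either add the extra condition \(\commutator{\alpha(X)}{\theta_{\LieA{m}}(X)} = 0\) explicitly or restrict to the case \(\commutator{\LieA{m}}{\LieA{m}}_{\LieA{m}} = 0\).
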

\begin{proof}
The \( \LieA{m} \)-component of the curvature \( \Omega \) of 
\( \theta \) is given by
\begin{equation}
\Omega_{\LieA{m}} = \dif \theta_{\LieA{m}} - 
\wedgeLie{\theta_{\LieA{g}}}{\theta_{\LieA{m}}} -
\frac{1}{2} \wedgeLie{\theta_{\LieA{m}}}{\theta_{\LieA{m}}}_{\LieA{m}},
\end{equation}
where \( \tau = \dif \theta_{\LieA{m}} - 
\wedgeLie{\theta_{\LieA{g}}}{\theta_{\LieA{m}}} \) 
is the torsion of the affine connection induced by 
\( \theta_{\LieA{g}} \), see, \eg, 
\parencite[Proposition~2.1.19]{RudolphSchmidt2014}.
By assumption, there exists a \( 1 \)-form \( \alpha \) on \( B \) 
with values in \( P \times_G \LieA{g} \) such that 
\( \tau = \wedgeLie{\alpha}{\theta_{\LieA{m}}} \).
Consider the Cartan connection
\begin{equation}
	\bar{\theta} = \theta_{\LieA{g}} + \alpha + \theta_{\LieA{m}}.
\end{equation}
It has torsion \( \bar{\Omega}_{\LieA{m}} = -
\frac{1}{2} \wedgeLie{\theta_{\LieA{m}}}{\theta_{\LieA{m}}}_{\LieA{m}} \).
Moreover, the torsion \( \bar{\tau} \) of the affine connection 
\( \nabla^{\bar{\theta}} \) induced by \( \bar{\theta}_{\LieA{g}} 
= \theta_{\LieA{g}} + \alpha \) is zero.
Unraveling the definitions, we find that \( \nabla^{\bar{\theta}}_X Y =
\nabla^{\theta}_X Y - \commutator{\alpha(X)}{\theta_{\LieA{m}}(Y)} \).
Moreover, for vector fields \( X, Y, Z \) on \( B \) with 
\( \bar{\theta} \)-horizontal lifts \( \hat{X}, \hat{Y}, \hat{Z} \) 
to \( P \), we have
\begin{equation}\begin{split}
Z \Xi(X, Y)
&= \hat{Z} \Xi_{\LieA{m}} \bigl(\theta_{\LieA{m}}(\hat{X}), 
\theta_{\LieA{m}}(\hat{Y})\bigr) \\
&= \Xi_{\LieA{m}} \bigl(\hat{Z} (\theta_{\LieA{m}}(\hat{X})), 
\theta_{\LieA{m}}(\hat{Y})\bigr) + 
\Xi_{\LieA{m}} \bigl(\theta_{\LieA{m}}(\hat{X}), 
\hat{Z} (\theta_{\LieA{m}}(\hat{Y}))\bigr)\\
&= \Xi_{\LieA{m}} \bigl(\theta_{\LieA{m}}(\nabla^{\bar{\theta}}_Z X), \theta_{\LieA{m}}(\hat{Y})\bigr) + 
\Xi_{\LieA{m}} \bigl(\theta_{\LieA{m}}(\hat{X}), 
\nabla^{\bar{\theta}}_Z \theta_{\LieA{m}}(\hat{Y})\bigr)\\
&= \Xi \bigl(\nabla^{\bar{\theta}}_Z X, Y\bigr) + 
\Xi\bigl(X, \nabla^{\bar{\theta}}_Z Y\bigr).
\end{split}\end{equation}
Hence, \( \nabla^{\bar{\theta}} \) is metric, and thus it is the 
Levi-Civita connection of \( \Xi \).

Finally, if \( \Omega_{\LieA{m}} = 0 \), then we can choose 
\( \alpha = \frac{1}{2} \theta_{\LieA{m}} \) and 
obtain~\eqref{eq:cartan:riemannianConnectionTorsionFree}.
In this case, \( \nabla^{\textrm{LC}}_X X = \nabla^{\theta}_X X \) 
so that both connections have the same geodesics.
\end{proof}
Note that this discussion of geodesics does not at all invoke 
the role of the Cartan bundle as a local model for a (complexified) 
orbit. In fact, it also gives a slightly different geometric 
interpretation of some constructions in 
\parencite{Modin2017,KhesinMisiolekModin2021}.
Let us first discuss the case of a homogenous space, and then 
apply it to Wasserstein geometry.
\begin{example}[Homogenous space]
Let \( (\LieA{a}, \LieA{g}) \) be a Klein pair with group \( G \) 
and assume that \( \LieA{a} \) integrates to a Lie group \( A \) 
with an exponential map.
Furthermore, suppose that \( G \) is a principal Lie subgroup 
of \( A \), \ie, the left action of \( G \) on \( A \) yields a 
principal \( G \)-bundle \( A \to G \backslash A \) (this is no 
longer automatic in the infinite-dimensional setting, 
see \parencite[Problem~IX.3]{Neeb2006}).
Then right \( A \)-invariant Cartan connections on 
\( A \to G \backslash A \) are in bijective correspondence 
with \( \AdAction_G \)-equivariant isomorphisms 
\( \lambda: \LieA{a} \to \LieA{a} \) and 
\( \check{\lambda}: \LieA{a} \slash \LieA{g} \to 
\LieA{a} \slash \LieA{g} \) fitting into the commutative diagram
\begin{equationcd}
\label{eq:cartan:homogenousSpaceExactSequence}
0 \to[r] &\LieA{g} \to[r] \to[d, "\id"] &\LieA{a} \to[r] 
\to[d, "\lambda"] &\LieA{a} \slash \LieA{g} \to[r] 
\to[d, "\check{\lambda}"] &0\\
0 \to[r] &\LieA{g} \to[r] &\LieA{a} \to[r] 
&\LieA{a} \slash \LieA{g} \to[r] &0.
\end{equationcd}
The associated Cartan connection \( \theta \) is given 
by \( \theta_a(\xi \ldot a) = \lambda(\xi) \) for 
\( a \in A \) and \( \xi \in \LieA{a} \).
For \( \xi \in \LieA{a} \), the constant vector field 
\( \theta^{-1}(\xi) \) on \( A \) has the form 
\( \theta^{-1}(\xi)_a = \lambda^{-1}(\xi) \ldot a \) 
and the integral curve starting at \( a \) is given by 
the \( 1 \)-parameter subgroup 
\( t \mapsto \exp(t \lambda^{-1}(\xi)) \ldot a \).
The projection of this curve to \( G \backslash A \) is a 
generalized geodesic. If \( \lambda \) is the identity, 
then \( \theta \) is the Maurer--Cartan form of \( A \) 
and \( \theta^{-1}(\xi) \) is the right invariant vector 
field on \( A \) corresponding to \( \xi \in \LieA{a} \).

Assume now that we have a reductive decomposition 
\( \LieA{a} = \LieA{g} \oplus \LieA{m} \).
Then \( \lambda \) is necessarily of the form 
\( \lambda(\eta + \xi) = \eta + \tilde{\lambda}(\xi) 
+ \check{\lambda}(\xi) \), \( \eta \in \LieA{g} \), 
\( \xi \in \LieA{m} \), for some \( \AdAction_G \)-equivariant 
linear map \( \tilde{\lambda}: \LieA{m} \to \LieA{g} \) 
and an \( \AdAction_G \)-equivariant isomorphism 
\( \check{\lambda}: \LieA{m} \to \LieA{m} \).
Then the unique geodesic starting at the identity coset in 
the direction \( \equivClass{e, \xi} \in A \times_G \LieA{m} \) 
is given by \( t \mapsto 
\equivClass*{\exp\Bigl(t \check{\lambda}^{-1}(\xi) - 
t \tilde{\lambda}\bigl(\check{\lambda}^{-1}(\xi)\bigr)\Bigr)} \).
\end{example}
\begin{example}[Information Geometry]\label{ex:cartan:informationGeometry}
Let \( (M, g) \) be a compact Riemannian manifold of 
dimension \( n \). Denote the volume form of \( g \) 
by \( \mu \) and the space of smooth volume forms on 
\( M \) with total volume \( 1 \) by \( \SectionSpaceAbb{B} \).
The diffeomorphism group 
\( \SectionSpaceAbb{P} = \DiffGroup(M) \) fibers by 
the push-forward map over \( \SectionSpaceAbb{B} \), that is,
\( \pi:\phi\ni \DiffGroup(M) \mapsto\phi_* \mu = (\phi^{-1})^* \mu
\in  \mathcal{B} \).
The structure group of this principal bundle is the 
group \( \DiffGroup(M, \mu) \) of 
volume-preserving diffeomorphism and 
\( \psi \in \DiffGroup(M, \mu) \) acts on 
\( \SectionSpaceAbb{P} \) by 
\( \phi \mapsto \phi \circ \psi^{-1} \).
It is tempting to use the Maurer--Cartan form on 
\( \SectionSpaceAbb{P} \) together with the 
Helmholtz--Hodge decomposition of vector fields 
on \( M \) into divergence-free and gradient 
parts to define a Cartan connection on 
\( \SectionSpaceAbb{P} \). However, only metric-preserving 
diffeomorphisms leave the Helmholtz decomposition invariant; 
this renders the construction of a Cartan connection 
along these lines difficult.

Instead, we start with the following weighted Helmholtz 
decomposition of vector fields on \( M \). For every 
\( \phi \in \DiffGroup(M) \) and a vector field \( X \) 
on \( M \), we can decompose \( \phi_* X \) into a 
divergence-free part \( X^{\phi_* \mu-\divergence} \) 
and a gradient part \( X^{\grad} \) with respect to the 
volume form \( \phi_* \mu \) as
\begin{equation}
	\phi_* X = X^{\phi_* \mu-\divergence} + X^{\grad}, 
	\quad \divergence_{\phi_* \mu} X^{\phi_* \mu-\divergence} = 0.
\end{equation}
Equivalently, we can write \( X = \phi^* X^{\phi_* \mu-\divergence} 
+ \phi^* X^{\grad} \), where \( \phi^* X^{\phi_* \mu-\divergence} \) 
is now divergence-free with respect to \( \mu \).
In other words, every vector field \( X \) uniquely decomposes 
as follows:
\begin{equation}
\label{eq:cartan:helmholtzDecomposition}
X = X^{\divergence} + \phi^* \grad f, \qquad 
X^{\divergence} \in \VectorFieldSpace(M, \mu), \; \; 
f \in \sFunctionSpace_0(M).
\end{equation}
Note that both \( X^{\divergence} \) and \( f \) also 
depend on \( \phi \). In fact, if we regard both objects 
as functions of \( X \) and \( \phi \), then we get the 
following equivariance properties under the action 
of \( \psi \in \DiffGroup(M, \mu) \):
\begin{equation}
	X^{\divergence}(\psi_* X, \phi \circ \psi^{-1}) 
	= \psi_* X^{\divergence}(X, \phi), \qquad 
	f(\psi_* X, \phi \circ \psi^{-1}) = f(X, \phi).
\end{equation}
Using this decomposition, we define a Cartan connection 
on \( \SectionSpaceAbb{P} \) by
\begin{equation}
	\theta_\phi(\tangent \phi \circ X) = (X^{\divergence}, f), 
	\quad \phi \in \SectionSpaceAbb{P}, \; \;
	X = X^{\divergence} + \phi^* \grad f \in \VectorFieldSpace(M),
\end{equation}
relative to the Klein pair \( \VectorFieldSpace(M, \mu) 
\subset \VectorFieldSpace(M, \mu) \oplus \sFunctionSpace_0(M) \).
The equivariance properties above show that \( \theta \) is 
indeed equivariant if we let \( \DiffGroup(M, \mu) \) act 
trivially on \( \sFunctionSpace_0(M) \) (and by push-forward 
on \( \VectorFieldSpace(M, \mu) \)).
In particular, this is now a reductive Cartan geometry.

Note that \( \theta^{-1}_\phi(0, f) = \grad f \circ \phi \).
The geodesics of the Cartan connection are hence curves 
\( \mu(t) = \bigl(\flow^{\grad f}_t\bigr)_* \mu \) for a 
time-independent \( f \in \sFunctionSpace_0(M) \).
By passing to the densities \( \rho(t) = \mu(t) \slash \mu \), 
the geodesic equation can be cast into the more familiar 
form of the continuity equation
\begin{equation}
	\partial_t \rho + \divergence_\mu(\rho \grad f) = 0.
\end{equation}

Above, we have absorbed the pull-back by \( \phi \) in the 
decomposition~\eqref{eq:cartan:helmholtzDecomposition} in 
the Cartan connection itself. Alternatively, we can keep 
the pull-back explicit and try to define a different 
Cartan connection as follows. Consider again the Klein 
pair \( \VectorFieldSpace(M, \mu) \subset 
\VectorFieldSpace(M, \mu) \oplus \sFunctionSpace_0(M) \),
but this time with the action of \( \DiffGroup(M, \mu) \) 
by push-forward on both components.
\Textcite{Otto2001} showed that the tangent space to 
\( \SectionSpaceAbb{B} \) at \( \nu \) can naturally be 
identified with \( \sFunctionSpace_0(M) \) using the 
parametrization\footnotemark{}\footnotetext{Expressed 
relative to the density function 
\( \rho = \nu \slash \mu \), 
this parametrization is written as
\begin{equation}
f \mapsto - \frac{1}{\rho} \nabla_i \bigl( \rho \nabla^i f \bigr).
\end{equation}}
\begin{equation}
\label{eq:cartan:ottoParametrization}
\sFunctionSpace_0(M) \ni f \mapsto - \difLie_{\grad f} \nu 
\in \TBundle_\nu \SectionSpaceAbb{B}.
\end{equation}
Note that \( \tangent_\phi (\tangent \phi \circ X) =
- \difLie_{\grad f} (\phi_* \mu) = 
- \difLie_{\grad f} \nu \) if \( X = \phi^* \grad f \) 
and \( \nu = \phi_* \mu \).
Using this identity, we can reinterpret the 
identification~\eqref{eq:cartan:ottoParametrization} as 
the \( \sFunctionSpace_0(M) \)-component of a Cartan 
connection \( \bar{\theta} \) on \( \SectionSpaceAbb{P} \) 
by setting
\begin{equation}
\left(\bar{\theta}_{\sFunctionSpace_0(M)}\right)_\phi(
\tangent \phi \circ X) = 
f \circ \phi, \qquad \phi \in \SectionSpaceAbb{P}, \; \; 
X = X^{\divergence} + \phi^* \grad f \in \VectorFieldSpace(M).
\end{equation}
It would be now natural to complete this to a Cartan connection 
by choosing the \( \VectorFieldSpace(M, \mu) \)-component in 
such a way that the geodesics coincide with the Wasserstein geodesics.
Following \parencite[Lemma~4]{Lott2008}, the Levi-Civita 
connection \( \bar{\nabla} \) on \( \SectionSpaceAbb{B} \) 
corresponding to the Wasserstein metric is given, under the 
identification~\eqref{eq:cartan:ottoParametrization}, by the map
\begin{equation}
\bar{\nabla}_g: f \mapsto G_\nu \dif^*_\nu 
\bigl((\nabla^j g) \cdot (\nabla_i \nabla_j f) \dif x^i \bigr),
\end{equation}
where \( \dif^*_\nu \) is the adjoint of \( \dif \) in
\( \LTwoFunctionSpace(M, \nu) \), \( G_\nu \) is the Green 
operator of the Laplacian \( \dif^*_\nu \dif \) and 
\( g \in \sFunctionSpace_0(M) \) gives the direction 
of the derivative. Hence, in order for this affine connection 
to come from a Cartan connection, one would need write 
\( \bar{\nabla}_g (f) \) as the application of a divergence-free 
vector field on \( f \). In particular, \( \bar{\nabla}_g \) 
would need to be a first-order differential operator.
However, the appearance of the Green operator in the expression 
of \( \bar{\nabla}_g \) makes this impossible and one only 
obtains a \emph{pseudo}-differential operator of order \( 1 \).
	
\emph{Thus, there is no Cartan connection \( \bar{\theta} \) 
on \( \SectionSpaceAbb{P} \) with the \( \sFunctionSpace_0(M) \)-component given above, whose geodesics coincide 
with the Wasserstein geodesics.} The structure group 
\( \DiffGroup(M, \mu) \) of the Cartan bundle 
\( \SectionSpaceAbb{P} \) is too small to serve as the 
holonomy group of the Wasserstein metric.
In order to fit the Wasserstein metric into the framework 
of Cartan geometry, one would need to enlarge the structure 
group of the Cartan bundle to a Lie group whose Lie algebra 
contains the pseudo-differential operator \( \bar{\nabla}_g \).
This may be possible, based on the work 
\parencite{AdamsRatiuSchmid1985}, and it is an interesting 
direction for future research.

\end{example}
\subsection{Extremal elements}
\label{sec:cartan:extremalElements}

In K\"ahler geometry, the extremal vector field is closely 
related to the Futaki invariant.
We will now show that the same is true in the context 
of Cartan geometries.
Our treatment of the material here is inspired by, and 
follows in spirit, the original treatment of \textcite{FutakiMabuchi1995}.

Let \( (P, \theta) \) be a Cartan geometry modeled on a 
reductive Klein pair \( (\LieA{a}, \LieA{g}) \) with group
\( G \). We assume that \( \LieA{a} \) carries an 
\( \AdAction_G \)-invariant bilinear form 
\( \Xi: \LieA{a} \times \LieA{a} \to \R \) which vanishes 
on \( \LieA{g} \) and is (weakly) non-degenerate on 
\( \LieA{a} \slash \LieA{g} \).
These properties ensure that \( \Xi \) produces 
a (weak) Riemannian metric \( \Xi \) on \( B \) 
via the isomorphism 
\( \TBundle B \isomorph P \times_G (\LieA{a} \slash \LieA{g}) \) 
induced by the Cartan connection.
We can pull back this metric along the action of \( \LieA{a}_m \) 
to obtain, for every \( p \in P \), a bilinear form
\begin{equation}
	\label{eq:cartan:extremalElements:pullbackForm}
\Xi_p: \LieA{a}_m \times \LieA{a}_m \to \R, \quad 
(\zeta_1, \zeta_2) \mapsto 
\Xi\bigl(\theta_p(p \ldot \zeta_1), \theta_p(p \ldot \zeta_2)\bigr).
\end{equation}
The equivariance properties ensure that \( \Xi_p \) 
depends only on the point \(b \defeq \pi(p) \in B \); we will write 
\( \Xi_b \) for \( \Xi_p \) in this case. 
\begin{lemma}\label{extremal1}
For \( \zeta_1, \zeta_2 \in \LieA{a}_m \), the covariant derivative 
of the function \( b \mapsto \Xi_b(\zeta_1, \zeta_2) \) 
is given by
\begin{equation}
\nabla_\xi \bigl(\Xi_{(\cdot)}(\zeta_1, \zeta_2)\bigr)(b) = 
\Xi\bigl(\commutator{\xi}{\theta_p(p \ldot \zeta_1)}, 
\theta_p(p \ldot \zeta_2)\bigr)
+ \Xi\bigl(\theta_p(p \ldot \zeta_1), 
\commutator{\xi}{\theta_p(p \ldot \zeta_2)}\bigr)
\end{equation}
for \( \xi \in \LieA{a} \) and \( p \in \pi^{-1}(b) \).
In particular, if \( \Xi \) is \( \adAction_{\LieA{a}} \)-invariant 
and \( B \) path-connected, then \( \Xi_b(\zeta_1, \zeta_2) \) 
is independent of \( b \in B \).
\end{lemma}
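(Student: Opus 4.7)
The plan is to reduce the covariant derivative of the scalar function $b \mapsto \Xi_b(\zeta_1, \zeta_2)$ to a Lie derivative on $P$, then unpack the result through the Maurer--Cartan equation exactly as was done in the proof of \cref{prop:cartan:constFutaki}.

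First, I would introduce the $G$-equivariant map $\tilde{G}_\zeta \colon P \to \LieA{a}$, $p \mapsto \theta_p(p \ldot \zeta)$, already used in \cref{prop:cartan:constFutaki}. Together with the $\AdAction_G$-invariance of $\Xi$, this gives the well-defined representation $\Xi_b(\zeta_1, \zeta_2) = \Xi\bigl(\tilde{G}_{\zeta_1}(p), \tilde{G}_{\zeta_2}(p)\bigr)$ for any $p \in \pi^{-1}(b)$. Since the target is $\R$, the Cartan covariant derivative $\nabla_\xi$ reduces to the directional derivative along $\theta^{-1}(\xi)$ at $p$. Applying the Leibniz rule to the bilinear form $\Xi$ then yields
\begin{equation*}
\nabla_\xi \bigl(\Xi_{(\cdot)}(\zeta_1, \zeta_2)\bigr)(b) = \Xi\bigl(\theta^{-1}(\xi)_p(\tilde{G}_{\zeta_1}), \tilde{G}_{\zeta_2}(p)\bigr) + \Xi\bigl(\tilde{G}_{\zeta_1}(p), \theta^{-1}(\xi)_p(\tilde{G}_{\zeta_2})\bigr).
\end{equation*}

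Next, I would substitute the Maurer--Cartan-type expansion derived in the proof of \cref{prop:cartan:constFutaki}, namely
\begin{equation*}
\theta^{-1}(\xi)_p(\tilde{G}_\zeta) = \Omega\bigl(\theta^{-1}(\xi), \zeta^*\bigr)_p + \commutator{\xi}{\theta_p(p \ldot \zeta)} + \bigl(\difLie_{\zeta^*} \theta\bigr)\bigl(\theta^{-1}(\xi)\bigr)_p.
\end{equation*}
The commutator terms reproduce exactly the right-hand side claimed in the lemma, so the proof reduces to showing that the $\Omega$ and $\difLie_{\zeta^*} \theta$ contributions drop out when paired with $\tilde{G}_{\zeta_i}$ under $\Xi$. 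This is the main technical point: invoking the standing torsion-free hypothesis on $\theta$ together with $\difLie_{\zeta^*} \theta \in \LieA{g}$ (the same hypotheses that appeared in \cref{prop:cartan:constFutaki}) places both terms inside $\LieA{g}$, on which $\Xi$ vanishes by assumption.

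The ``in particular'' assertion is then an immediate corollary. If $\Xi$ is $\adAction_{\LieA{a}}$-invariant, then $\Xi\bigl(\commutator{\xi}{\eta_1}, \eta_2\bigr) + \Xi\bigl(\eta_1, \commutator{\xi}{\eta_2}\bigr) = 0$ for all $\xi, \eta_1, \eta_2 \in \LieA{a}$, so the right-hand side of the formula just established vanishes identically. Because $\theta$ is a fibrewise isomorphism, every tangent vector in $\TBundle B$ is represented by some $\tangent \pi \circ \theta^{-1}(\xi)$, so the ordinary differential of $b \mapsto \Xi_b(\zeta_1, \zeta_2)$ vanishes everywhere on $B$, and path-connectedness promotes this to a global constant.
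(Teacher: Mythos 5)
Your proposal is correct and follows essentially the same route as the paper: Leibniz rule for the bilinear pairing, then the formula for the covariant derivative of $p \mapsto \theta_p(p \ldot \zeta)$ established in the proof of \cref{prop:cartan:constFutaki} (you re-expand it via the curvature and $\difLie_{\zeta^*}\theta$ terms, which the paper simply cites as its equation modulo $\LieA{g}$), with the vanishing of $\Xi$ on $\LieA{g}$ removing the ambiguity. The ad-invariance plus path-connectedness argument for the final claim also matches the paper's.
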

\begin{proof}
Let \( \zeta_1, \zeta_2 \in \LieA{a}_m \) and 
\( \xi \in \LieA{a} \). Then, abbreviating 
\( s_\zeta(p) = \theta(p \ldot \zeta) \in \LieA{a} \) and using~\eqref{eq:cartan:constFutaki:covszeta}, we find
\begin{equation}\begin{split}
\nabla_\xi \bigl(\Xi_{(\cdot)}(\zeta_1, \zeta_2)\bigr)(b)
&= \Xi \bigl(\nabla_\xi s_{\zeta_1}(p), s_{\zeta_2}(p)\bigr) 
+ \Xi \bigl(s_{\zeta_1}(p), \nabla_\xi s_{\zeta_2}(p)\bigr)\\
&= \Xi \bigl(\commutator{\xi}{s_{\zeta_1}(p)}, s_{\zeta_2}(p)\bigr) 
+ \Xi \bigl(s_{\zeta_1}(p), \commutator{\xi}{s_{\zeta_2}(p)}\bigr),
\end{split}\end{equation}
because \( \Xi \) vanishes on \( \LieA{g} \).
The second statement follows from the first by the path-connectedness 
of \( B \).
\end{proof}
If the assumptions of the lemma are satisfied, then we obtain a 
constant bilinear form \( \Xi_m \equiv \Xi_b: 
\LieA{a}_m \times \LieA{a}_m \to \R \).
Assuming that \( \Xi_m \) is strongly non-degenerate, 
we can define an extremal element of \( \LieA{a}_m \) as 
the unique one corresponding to the Futaki functional 
\( F: \LieA{a}_m \ni \zeta \mapsto F_\zeta \in \R \) under 
\( \Xi_m \).
\begin{definition}
An element \( \zeta_m \in \LieA{a}_m \) is called 
\emphDef{extremal} if
\begin{equation}
	\label{eq:cartan:extremalElements:extremalCondition}
		F_\zeta = \Xi_m(\zeta_m, \zeta)
\end{equation}
for all \( \zeta \in \LieA{a}_m \).
\end{definition}
By definition, this condition is equivalent to
\begin{equation}
\kappa_\LieA{a}\bigl(J(\chi(p)), \theta_p (p \ldot \zeta)\bigr) = \Xi\bigl(\theta_p(p \ldot \zeta_m), \theta_p(p \ldot \zeta)\bigr)
\end{equation}
for some (and hence all) \( p \in P \). Thus, up to the pull-back 
by the map \( \zeta \mapsto \theta_p(p \ldot \zeta) \), an 
extremal element is the same as the projection of \( J(\chi(p)) \) 
onto \( \LieA{a}_m \).

\section{K\"ahler geometry}
\label{sec:kaehler}

Let \( (M, \omega) \) be a finite-dimensional symplectic manifold, 
assumed to be compact, for simplicity. Denote by 
\( \SectionSpaceAbb{I} \) the space of all almost complex 
structures on \( M \) compatible with \( \omega \).
Then \( \SectionSpaceAbb{I} \) is an open  contractible 
subset, in the sense of 
\parencite[Section~2.1]{DiezRatiuSymplecticConnections}, of the 
Fr\'echet space \( \sSectionSpace(M, \EndBundle(TM)) \), see 
\parencite[Equation~(5.7)]{DiezRatiuSymplecticConnections}.
For a given \( j \in \SectionSpaceAbb{I} \), define 
\( \SectionSpaceAbb{P} \) to be the space of all tuples 
\( (\sigma, \phi) \), where \( \sigma \) is a symplectic 
form on \( M \) compatible  with \( j \) and 
\( \phi \in \DiffGroup(M) \), satisfying 
\( \phi^* \omega = \sigma \). Then, if $j$ is integrable 
which we assume from now on, \( \SectionSpaceAbb{P} \) is 
a Fr\'echet principal bundle with structure group 
\( \DiffGroup(M, \omega) \) over the space 
\( \SectionSpaceAbb{K} \) of K\"ahler forms in the same 
class as \( \omega \). Since \( \sigma \) is uniquely 
determined by \( \phi \), we may regard \( \SectionSpaceAbb{P} \) 
as a submanifold of \(\DiffGroup(M)\) as follows:
\begin{equation}
\SectionSpaceAbb{P} = \set*{\phi \in \DiffGroup(M) \given 
\sigma_\phi \defeq \phi^* \omega \in \SectionSpaceAbb{K}}. 
\end{equation}
We also put 
\begin{equation}
j_\phi \defeq \phi_* j = 
\tangent\phi\circ j \circ \tangent\phi^{-1}, 
\qquad \dif^{c_\phi} f \defeq j_\phi \dif f.
\end{equation}
The action of \( \psi \in \DiffGroup(M, \omega) \) on 
$\SectionSpaceAbb{P}$ is given by $\phi \mapsto \psi \circ \phi$.
\Textcite{Donaldson1999a} endowed \( \SectionSpaceAbb{P} \) 
with a natural principal connection as follows.
First note that every \( \sigma \in \SectionSpaceAbb{K} \) 
is of the form \( \sigma = \omega + \dif \dif^c \varrho \) 
for some \( \varrho \in \sFunctionSpace(M) \), where 
\( \dif^c f \defeq j \dif f \).
This identifies the tangent space 
\( \TBundle_\sigma \SectionSpaceAbb{K} \) with 
\(\sFunctionSpace(M)/\R \isomorph \sFunctionSpace_0(M) \).
Under this identification, tangent vectors at 
\( \phi \in \SectionSpaceAbb{P} \) are of the form 
\( X \circ \phi \) with \( X \in \VectorFieldSpace(M) \) satisfying 
\begin{equation}\label{eq:kaehler:tangentP}
	\difFracAt{}{t}{0}
\left(\flow^X_t \circ\phi\right)^*\omega = 
\phi^* \difLie_X \omega =
\dif\dif^c \varrho_{\phi, X},
\end{equation}
for some \( \varrho_{\phi, X} \in \sFunctionSpace_0(M) \).
Note that \( \varrho_{\phi, X} \) is uniquely determined by 
this equation and, geometrically, it is the projection of 
\( X \circ \phi \in \TBundle_\phi \SectionSpaceAbb{P} \) down 
to a tangent vector in \( \TBundle_{\phi^* \omega} 
\SectionSpaceAbb{K} \isomorph \sFunctionSpace_0(M) \).
Hence, a tangent vector \( X \) to 
\( \SectionSpaceAbb{P} \) at \( \phi \) satisfies the 
constraint that \( X \contr \omega - \phi_* \dif^c \varrho_{\phi, X} \) 
is closed, where $\phi_* = (\phi^*)^{-1}$.
We can take the \( \omega \)-dual of this closed 
\( 1 \)-form and obtain a symplectic vector field.
It is now easy to verify that this is prescription 
defines a connection on \( \SectionSpaceAbb{P} \).
Our first basic observation is that this connection 
defined by Donaldson can be extended to a Cartan 
connection for the Klein pair 
\( \bigl(\VectorFieldSpace(M, \omega), 
\VectorFieldSpace(M, \omega) \oplus 
\I \, \HamVectorFields(M, \omega)\bigr) \) in the following way.

We begin by clarifying our conventions for the Lie algebra 
structure of this Klein pair. In the following, it will be 
convenient to regard the Lie algebra of vector fields as the left Lie algebra associated with the Lie 
group of diffeomorphisms. Thus, its Lie bracket 
is minus the usual Lie bracket of vector fields. Hence, 
on \( \VectorFieldSpace(M, \omega) \oplus 
\I \, \HamVectorFields(M, \omega) \) we define the bracket by
\begin{equation}
\label{eq:kaehler:bracket}
\commutator*{X + \I Y}{X' + \I Y'} = 
-\commutator{X}{X'} - \I \commutator{X}{Y'} 
- \I \commutator{Y}{X'} + \commutator{Y}{Y'}.
\end{equation}
This choice is consistent with having the adjoint action 
of \( \DiffGroup(M, \omega) \) on \( \VectorFieldSpace(M, \omega) 
\oplus \I \, \HamVectorFields(M, \omega) \) given by push-forward.
Next, since $j_\phi = \phi_* j$, we have
\begin{equation}\label{eq:kaehler:pushforwardDifc}
\phi_* (\dif^c f) = \phi_* \bigl(j\dif f \bigr) 
= \dif^{c_\phi} \bigl(\phi_* f \bigr)
\end{equation}
and the vector field $X$ has the decomposition
\begin{equation}\label{decompX}
X = \omega^\sharp (X \contr \omega - 
\phi_* \dif^c \varrho_{\phi, X}) +
\omega^\sharp j_\phi(\dif \phi_* \varrho_{\phi, X}).
\end{equation}
Motivated by this, the Cartan connection is given by 
the following lemma.
\begin{lemma}\label{lem1}
The assignment
\begin{equation}
\label{eq:kaehler:cartanConnection}
\theta_{\phi}(X \circ \phi) = 
\omega^\sharp \bigl( X \contr \omega - 
\phi_* \dif^c \varrho_{\phi, X} \bigr)\ 
- \ \I \omega^\sharp \bigl(\dif \phi_* \varrho_{\phi, X} \bigr)
\end{equation}
defines a Cartan connection on \( \SectionSpaceAbb{P} \) 
relative to the Klein pair given by the subalgebra 
\( \VectorFieldSpace (M, \omega) \) in 
\( \VectorFieldSpace (M, \omega) \oplus \I \, 
\HamVectorFields(M, \omega) \) with group 
\( \DiffGroup(M, \omega) \).
\end{lemma}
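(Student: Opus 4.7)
The plan is to check the four axioms of a Cartan connection modeled on $\mathfrak{a} \defeq \VectorFieldSpace(M, \omega) \oplus \I\, \HamVectorFields(M, \omega)$ with subalgebra $\VectorFieldSpace(M, \omega)$ and group $\DiffGroup(M, \omega)$: (i) $\theta$ takes values in $\mathfrak{a}$, (ii) $\theta_\phi$ is a linear isomorphism $\TBundle_\phi \SectionSpaceAbb{P} \to \mathfrak{a}$, (iii) $\theta$ is $\DiffGroup(M, \omega)$-equivariant, and (iv) $\theta(\xi^*) = \xi$ for $\xi \in \VectorFieldSpace(M, \omega)$. Throughout, the key input is the uniqueness modulo constants of the potential $\varrho_{\phi, X}$ defined by \eqref{eq:kaehler:tangentP}, which comes from the $\dif\dif^c$-lemma applied fiberwise to the K\"ahler manifold $(M, \sigma_\phi, j)$; the normalization $\sFunctionSpace(M)/\R \isomorph \sFunctionSpace_0(M)$ fixes this constant.

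For (i), \eqref{eq:kaehler:tangentP} yields $\dif(X \contr \omega) = \difLie_X \omega = \dif\bigl(\phi_* \dif^c \varrho_{\phi, X}\bigr)$, so the real part is the $\omega$-dual of a closed $1$-form, hence symplectic, while the imaginary part is $-\I$ times the Hamiltonian vector field of $\phi_* \varrho_{\phi, X} \in \sFunctionSpace_0(M)$. For (ii), an inverse is given by sending $\bigl(Y,\; -\I\, \omega^\sharp \dif h\bigr)$ with $Y \in \VectorFieldSpace(M, \omega)$ and $h \in \sFunctionSpace_0(M)$ to $X \circ \phi$, where $X \defeq Y + \omega^\sharp\bigl(\phi_* \dif^c \phi^* h\bigr)$; a direct computation shows $\phi^* \difLie_X \omega = \dif\dif^c \phi^* h$, so $X \circ \phi \in \TBundle_\phi \SectionSpaceAbb{P}$ with $\varrho_{\phi, X} = \phi^* h$, and substitution into \eqref{eq:kaehler:cartanConnection} recovers the prescribed element of $\mathfrak{a}$. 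Injectivity follows from the uniqueness of $\varrho_{\phi, X}$.

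For (iii), the lift of left multiplication by $\psi \in \DiffGroup(M, \omega)$ sends $X \circ \phi$ to $(\psi_* X) \circ (\psi \circ \phi)$. Since $\psi^* \omega = \omega$, one has $(\psi \circ \phi)^* \difLie_{\psi_* X} \omega = \phi^* \difLie_X \omega$, so the uniqueness of $\varrho$ forces $\varrho_{\psi \circ \phi, \psi_* X} = \varrho_{\phi, X}$; combined with $\psi_* \omega^\sharp = \omega^\sharp \psi_*$ and $\psi_* \dif = \dif \psi_*$, a short computation yields $\theta_{\psi \circ \phi}\bigl((\psi_* X) \circ (\psi \circ \phi)\bigr) = \psi_* \theta_\phi(X \circ \phi)$, which is the required equivariance since $\AdAction_\psi$ acts on both summands of $\mathfrak{a}$ by push-forward. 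For (iv), the fundamental vector field of $\xi \in \VectorFieldSpace(M, \omega)$ is $\xi^*_\phi = \xi \circ \phi$; because $\difLie_\xi \omega = 0$, the equation $\dif\dif^c \varrho_{\phi, \xi} = 0$ together with the normalization forces $\varrho_{\phi, \xi} = 0$, and \eqref{eq:kaehler:cartanConnection} collapses to $\theta_\phi(\xi \circ \phi) = \omega^\sharp(\xi \contr \omega) = \xi$.

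The main obstacle is not the algebraic manipulations, which are short, but the careful handling of $\varrho_{\phi, X}$: existence, uniqueness modulo constants, and smooth dependence on the pair $(\phi, X)$. These all ultimately rest on the $\dif\dif^c$-lemma on the varying K\"ahler manifold $(M, \sigma_\phi, j)$, together with a compatibility check of the $\sFunctionSpace_0$-normalization under push-forward by the (generally non-symplectic) diffeomorphism $\phi$, so that the formula yields a genuinely well-defined, smooth $\mathfrak{a}$-valued $1$-form on the Fr\'echet manifold $\SectionSpaceAbb{P}$.
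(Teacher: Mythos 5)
Your proposal is correct and follows essentially the same route as the paper: check equivariance via the uniqueness of the potential (giving $\varrho_{\psi\circ\phi,\psi_*X}=\varrho_{\phi,X}$), note that symplectic $X$ have $\varrho_{\phi,X}=0$ so $\theta(\xi^*)=\xi$, and establish the isomorphism property, which the paper gets from the decomposition~\eqref{decompX} and you get by writing out the explicit inverse — the same computation in different clothing. Your added attention to the well-definedness, normalization, and smooth dependence of $\varrho_{\phi,X}$ is a point the paper leaves implicit, and it is handled correctly since only $\dif\varrho$ and $\dif^c\varrho$ enter the formula.
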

\begin{proof}
If \( X \) is a symplectic vector field, then 
\( \varrho_{\phi, X} = 0 \) and consequently 
\begin{equation}
\theta_{\phi}(X \circ \phi) = 
\omega^\sharp \bigl( X \contr \omega \bigr) = X.
\end{equation}
Moreover, for every \( \psi \in \DiffGroup(M, \omega) \), 
we have 
\begin{equation}
\dif \dif^c \varrho_{\psi \circ \phi, \psi_* X} 
= (\psi \circ \phi)^* \difLie_{\psi_* X} \omega 
= \phi^* \difLie_X \omega = \dif \dif^c \varrho_{\phi, X}.
\end{equation}
Hence, \( \varrho_{\psi \circ \phi, \psi_* X} = 
\varrho_{\phi, X} \) and
\begin{equation}
\theta_{\psi \circ \phi}\bigl(\tangent \psi \circ X \circ \phi\bigr)
= \theta_{\psi \circ \phi}\bigl((\psi_* X) \circ \psi \circ \phi\bigr)
= \psi_* \bigl(\theta_\phi (X \circ \phi) \bigr).
\end{equation}
This verifies that the form \( \theta \) is equivariant with 
respect to the action of \( \DiffGroup(M, \omega) \) on 
\( \SectionSpaceAbb{P} \) and \( \VectorFieldSpace(M) \).
Finally, the decomposition~\eqref{decompX} shows that 
\( \theta_\phi: \TBundle_\phi \SectionSpaceAbb{P} \to 
\VectorFieldSpace(M, \omega) \oplus \I \, 
\HamVectorFields(M, \omega) \) is an isomorphism.
\end{proof}

The isomorphism 
\( \SectionSpaceAbb{P} \times_{\DiffGroup(M, \omega)} 
\HamVectorFields(M, \omega) \isomorph 
\TBundle \SectionSpaceAbb{K} \) induced by the imaginary 
part of the Cartan connection is given by
\begin{equation}\label{eq:kaehler:tangentBundleIso}
\equivClass{\phi, X_f} \mapsto 
\dif \dif^c \phi^* f \in 
\TBundle_{\phi^* \omega} \SectionSpaceAbb{K}.
\end{equation}
Note that \( \SectionSpaceAbb{I} \) carries an almost 
complex structure given by \( \SectionMapAbb{j}_j(A) = -j A \).
The sign is chosen in such a way that \( \SectionMapAbb{j} \) 
is compatible with the symplectic forms on 
\( \SectionSpaceAbb{I} \) considered below, that is, 
the Riemannian metric associated is positive definite (it 
is the $L^2$-inner product), \cf 
\parencite[Equations~(5.1) and~(5.2)]{DiezRatiuSymplecticConnections}.

Now the choice of sign in the second term 
of~\eqref{eq:kaehler:cartanConnection} is motivated by the 
requirement that we want the infinitesimal 
action~\eqref{eq:cartan:infAction} associated with 
\( \theta \) to coincide with the natural action
\begin{equation}
(X + \I Y) \ldot j \defeq
X \ldot j + \SectionMapAbb{j}_j (Y \ldot j) 
= -\difLie_X j + j \difLie_Y j, \quad 
X \in \VectorFieldSpace(M, \omega), \;
Y \in \HamVectorFields(M, \omega).
\end{equation}
In fact, define \( \chi: \SectionSpaceAbb{P} \to \SectionSpaceAbb{I} \) 
by \( \chi(\phi) = \phi_* j = j_\phi \).
Then \( \chi \) is equivariant with respect to the action 
of \( \DiffGroup(M, \omega) \) on \( \SectionSpaceAbb{P} \)
and \( \SectionSpaceAbb{I} \). For integrable \( j \), we have
\begin{equation}
	\label{eq:kahler:complexOrbitModel}
	\tangent_{\phi} \chi(X \circ \phi) = - \difLie_X (\phi_* j) 
	= \theta_{\phi}(X) \ldot \chi(\phi),
\end{equation}
where the second equality follows from a straightforward 
calculation using \( j \difLie_Y j = \difLie_{jY} j \) 
(see \parencite[Lemma~1.1.1]{Gauduchon2017}).
The latter equation also shows that elements of 
\( \VectorFieldSpace(M, \omega)_\C \ldot j \) are of the form 
\( \difLie_Z j \) for some vector field \( Z \), 
\cf \parencite[Proposition~9.1.1.~(ii)]{Gauduchon2017}.
Hence \( \img \tangent_{\phi} \chi = 
\VectorFieldSpace(M, \omega)_\C \ldot \chi(\phi) \).
For every \( \varphi \in \DiffGroup(M) \) that preserves 
\( j \), we clearly have \( \chi(\phi \circ \varphi) = 
\chi(\phi) \). In other words, \( \chi \) is invariant 
under the natural right action of \( \DiffGroup(M,j) \).
Moreover, every real \( j \)-holomorphic vector field 
\( Z \) on a compact K\"ahler manifold can be uniquely 
written as a sum \( Z = X - j Y \) with 
\( X \in \VectorFieldSpace(M, \omega) \) and 
\( Y \in \HamVectorFields(M, \omega) \).
This realizes the space of \( j \)-holomorphic vector 
fields as subalgebra of \( \VectorFieldSpace(M, \omega) 
\oplus \I \, \HamVectorFields(M, \omega) \) forming the 
kernel of \( \tangent \chi \), \cf 
\parencite[Corollary~5.15]{DiezRatiuSymplecticConnections}.
In summary, this proves the following statement.
\begin{proposition}
	For every integrable \( j \in \SectionSpaceAbb{I} \), the Cartan bundle \( (\mathcal{P}, \theta) \) provides a model for the complex orbit through \( j \) in the sense of \cref{def:cartan:complexOrbitModel}.
\end{proposition}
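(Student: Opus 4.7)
The plan is to verify the two clauses of \cref{def:cartan:complexOrbitModel} by collecting the observations established in the paragraphs preceding the proposition. First I would check that \( (\SectionSpaceAbb{P}, \theta) \) is an orbit through \( j \) in the sense of \cref{def:cartan:orbitModel}. Equivariance of \( \chi(\phi) = \phi_* j \) under the left \( G = \DiffGroup(M, \omega) \)-action \( \phi \mapsto \psi \circ \phi \) is immediate from the functoriality of push-forward, and \( j = \chi(\id) \) lies in the image, so condition (i) holds.

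For condition (ii), the natural candidate for the stabilizer subalgebra \( \LieA{a}_j \subseteq \LieA{a} \) is the space of real \( j \)-holomorphic vector fields, embedded in \( \LieA{a} = \VectorFieldSpace(M, \omega) \oplus \I \, \HamVectorFields(M, \omega) \) via the decomposition \( Z = X - j Y \mapsto X + \I Y \) recalled just before the proposition. On a compact K\"ahler manifold this is a finite-dimensional Lie subalgebra, integrating to the biholomorphism group \( \DiffGroup(M, j) \), which acts on \( \SectionSpaceAbb{P} \) from the right by \( \phi \mapsto \phi \circ \varphi \); I would check that this preserves \( \SectionSpaceAbb{P} \) because \( (\phi \circ \varphi)^* \omega = \varphi^*(\phi^* \omega) \) remains compatible with \( j \) (as \( \varphi \) preserves \( j \) and \( \phi^*\omega \) already does) and lies in the same cohomology class as \( \omega \). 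Commutativity with the left \( G \)-action is associativity of composition, and the invariance \( \chi(\phi \circ \varphi) = \phi_* \varphi_* j = \phi_* j = \chi(\phi) \) yields \( \tangent \chi(\zeta^*) = 0 \).

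The complex orbit condition follows directly from~\eqref{eq:kahler:complexOrbitModel}, which states \( \tangent_\phi \chi(X \circ \phi) = \theta_\phi(X \circ \phi) \ldot \chi(\phi) \), combined with the fact that \( \theta_\phi \) is a bijection onto \( \LieA{a} \) by \cref{lem1}. Hence \( \img \tangent_\phi \chi = \LieA{a} \ldot \chi(\phi) \), and the identification of this with \( \VectorFieldSpace(M, \omega)_\C \ldot \chi(\phi) \) is the content of the remark following~\eqref{eq:kahler:complexOrbitModel}, which rests on the identity \( j \difLie_Y j = \difLie_{jY} j \) (\parencite[Lemma~1.1.1]{Gauduchon2017}) allowing the imaginary part of the action of \( \LieA{a} \) to reproduce the full complexified action on \( j \).

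The only point requiring genuine care is matching the bracket on \( \LieA{a}_j \) inherited from \( \LieA{a} \) via~\eqref{eq:kaehler:bracket} with the bracket of \( j \)-holomorphic vector fields coming from the subgroup \( \DiffGroup(M, j) \subset \DiffGroup(M) \). I expect this to be the main bookkeeping step: the sign conventions in~\eqref{eq:kaehler:bracket} were chosen precisely so that the usual vector field bracket of two \( j \)-holomorphic fields \( Z_i = X_i - j Y_i \) decomposes back into the form \( X - j Y \) with the correct components, and the verification again reduces to the identity \( j \difLie_Y j = \difLie_{jY} j \).
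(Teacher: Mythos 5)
Your proposal is correct and follows essentially the same route as the paper: the paper's ``proof'' is exactly the discussion preceding the proposition, namely equivariance of \( \chi(\phi) = \phi_* j \), the right action of \( \DiffGroup(M, j) \) with the decomposition \( Z = X - jY \) giving the stabilizer subalgebra \( \LieA{a}_j \subseteq \VectorFieldSpace(M,\omega) \oplus \I\,\HamVectorFields(M,\omega) \), and \cref{eq:kahler:complexOrbitModel} together with the bijectivity of \( \theta_\phi \) and the identity \( j \difLie_Y j = \difLie_{jY} j \) for the complex-orbit condition. The only step you flag as needing care --- that the embedded holomorphic vector fields form a Lie subalgebra compatible with the bracket \cref{eq:kaehler:bracket} --- is handled in the paper by citing \parencite[Corollary~5.15]{DiezRatiuSymplecticConnections}, so your outline matches the paper's argument in substance.
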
 
We wish to emphasize that we have a
rather interesting structure for the Lie algebra 
\( \VectorFieldSpace (M, \omega) \oplus 
\I \, \HamVectorFields(M, \omega) \)
in the Klein pair.  It is neither the complexification 
of \( \VectorFieldSpace (M, \omega) \) nor of 
\( \HamVectorFields(M, \omega) \). In particular, it is 
\emph{not natural} to view \( \SectionSpaceAbb{P} \) as 
a complexification of \( \DiffGroup(M, \omega) \) nor 
of \( \HamDiffGroup(M, \omega) \) unless the first Betti 
number vanishes. In particular, \( \SectionSpaceAbb{P} \) 
is not an "infinitesimal Lie group which complexifies 
\( \DiffGroup(M, \omega) \)" in the terminology 
of \parencite[p.~20]{Donaldson1999a}.


\begin{proposition}\label{torsion}
The Cartan connection \( \theta \) has zero curvature. 
Moreover, the Lie derivative of \( \theta \) with 
respect to any holomorphic vector field vanishes.
\end{proposition}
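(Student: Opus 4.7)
The plan is to establish the two assertions separately, using an explicit inversion of~\eqref{eq:kaehler:cartanConnection} as the computational linchpin for both.

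\textbf{Part 1 (zero curvature).} Since $\theta_\phi\colon T_\phi\SectionSpaceAbb{P}\to\VectorFieldSpace(M,\omega)\oplus\I\,\HamVectorFields(M,\omega)$ is an isomorphism, it suffices to evaluate $\Omega = \dif\theta - \frac{1}{2}\wedgeLie{\theta}{\theta}$ on pairs of $\theta$-constant vector fields $\theta^{-1}(\xi),\theta^{-1}(\eta)$. Because $\theta(\theta^{-1}(\xi))\equiv\xi$ is constant on $\SectionSpaceAbb{P}$, this reduces to the Lie-algebraic identity
\begin{equation*}
\theta\bigl(\commutator{\theta^{-1}(\xi)}{\theta^{-1}(\eta)}\bigr) = -\commutator{\xi}{\eta},\qquad \xi,\eta\in\VectorFieldSpace(M,\omega)\oplus\I\,\HamVectorFields(M,\omega).
\end{equation*}
First I would invert~\eqref{eq:kaehler:cartanConnection}: for $\xi = A + \I X_f$ with $A\in\VectorFieldSpace(M,\omega)$ and $f\in\sFunctionSpace_0(M)$, the imaginary part forces $\phi_*\varrho_{\phi,Y} = -f$, and then~\eqref{eq:kaehler:pushforwardDifc} gives $\phi_*\dif^c\varrho_{\phi,Y} = -\dif f\circ j_\phi$; solving the real part yields
\begin{equation*}
\theta^{-1}(\xi)_\phi = \bigl(A - \omega^\sharp(j_\phi\,\dif f)\bigr)\circ\phi,
\end{equation*}
where $-\omega^\sharp(j_\phi\,\dif f)$ is the gradient of $f$ with respect to the Kähler metric $g_\phi = \omega(\cdot, j_\phi\cdot)$ associated to the compatible pair $(\omega, j_\phi)$.

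Then I would compute $\commutator{\theta^{-1}(\xi)}{\theta^{-1}(\eta)}_\phi$ using the standard formula for brackets of vector fields of the form $v(\phi)\circ\phi$ on $\DiffGroup(M)$: it splits into the pointwise left-algebra bracket $\commutator{v(\phi)}{w(\phi)}$ plus cross terms coming from the derivatives of $v$ and $w$ with respect to $\phi$. Since the $\phi$-dependence of $v(\phi) = A + \grad_{g_\phi}f$ enters only through $j_\phi = \phi_*j$, these cross terms reduce to Lie derivatives of $j$ along the relevant vector fields on $M$. Applying $\theta_\phi$ to the resulting expression and invoking the Newlander--Nirenberg integrability of $j$ (in the form $\difLie_{jX}j = j\,\difLie_X j$ from~\parencite[Lemma~1.1.1]{Gauduchon2017}), together with the compatibility of $\grad_{g_\phi}$ with the Poisson bracket of Hamiltonian functions, the real and imaginary components both collapse to give $-\commutator{\xi}{\eta}$ computed via~\eqref{eq:kaehler:bracket}.

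\textbf{Part 2 (Lie-derivative invariance).} A real $j$-holomorphic vector field $Z$ generates a flow $\psi_t^Z$ of $j$-preserving diffeomorphisms, which preserves the K\"ahler class and therefore acts on $\SectionSpaceAbb{P}$ on the right by $R_{\psi_t^Z}\colon\phi\mapsto\phi\circ\psi_t^Z$, with fundamental vector field $Z^*_\phi = (\phi_*Z)\circ\phi$. Rather than argue infinitesimally, I would show directly that $R_\psi^*\theta = \theta$ for every such $\psi$, which gives $\difLie_{Z^*}\theta = 0$ by differentiation. Since $TR_\psi(X\circ\phi) = X\circ\phi\circ\psi$, one has $(R_\psi^*\theta)_\phi(X\circ\phi) = \theta_{\phi\psi}(X\circ\phi\psi)$. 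From $(\phi\psi)^*\difLie_X\omega = \psi^*\phi^*\difLie_X\omega = \psi^*(\dif\dif^c\varrho_{\phi,X}) = \dif\dif^c(\psi^*\varrho_{\phi,X})$, using $\psi^*\dif^c = \dif^c\psi^*$ which holds precisely because $\psi$ preserves $j$, one reads off $\varrho_{\phi\psi,X} = \psi^*\varrho_{\phi,X}$ up to an additive constant that is irrelevant since~\eqref{eq:kaehler:cartanConnection} only involves $\dif\phi_*\varrho$ and $\phi_*\dif^c\varrho$. Together with $(\phi\psi)_* = \phi_*\circ\psi_*$ and $\psi_*\circ\psi^* = \id$, both summands in~\eqref{eq:kaehler:cartanConnection} collapse back to their values at $\phi$, proving $R_\psi^*\theta = \theta$.

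\textbf{Main obstacle.} The delicate step is the bracket computation in Part~1: on the infinite-dimensional manifold $\SectionSpaceAbb{P}$, the bracket of two vector fields with $\phi$-dependent coefficients requires one to differentiate $\phi\mapsto\grad_{g_\phi}f$, and organizing the resulting terms into exactly $-\theta^{-1}(\commutator{\xi}{\eta})$ requires invoking integrability of $j$ at just the right moment. The structural reason this works is that the Nijenhuis tensor of $j$ vanishes, so the only potential obstruction to flatness is absent; the same calculation would break for a non-integrable almost complex structure, consistent with the fact that $\SectionSpaceAbb{P}$ is only a "local Lie group" in the K\"ahler setting.
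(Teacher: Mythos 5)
Your proposal takes essentially the same route as the paper's first proof: flatness is checked on \( \theta \)-constant vector fields, whose bracket is computed by differentiating the \( j_\phi \)-dependence of their coefficients and collapses via integrability of \( j \), which is exactly the content of~\eqref{eq:kaehler:commutatorBarFields}. For the second claim, your finite-level invariance \( R_\psi^* \theta = \theta \) under the right action of \( \DiffGroup(M, j) \) is just the integrated form of the paper's infinitesimal computation \( \commutator{Z^*}{\bar{X}} = 0 \), so the two arguments are equivalent.
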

\begin{proof}
As described above, a tangent vector at 
\(\phi \in \SectionSpaceAbb{P}\) is given by a vector 
field \( X \) on \(M\) satisfying the relation
\begin{equation}
\difLie_X \omega = \phi_* \dif \dif^{c}\varrho_{\phi, X} 
= \dif j_\phi \dif \phi_*\varrho_{\phi, X}
\end{equation} 
for some smooth function \(\varrho_{\phi,X}\), where 
\(j_\phi = \phi_* j\). As in~\eqref{decompX}, \(X\) 
can be written as
\begin{equation}\label{decompX2}
X = \omega^\sharp \bigl( X \contr \omega - 
\phi_* \dif^c \varrho_{\phi, X} \bigr) +
\omega^\sharp j_\phi\bigl(\dif \phi_* \varrho_{\phi, X} \bigr).
\end{equation}
We put
\begin{equation}
U_{\phi, X} = \omega^\sharp \bigl( X \contr \omega 
- \phi_* \dif^c \varrho_{\phi, X} \bigr) 
\end{equation}
and 
\begin{equation}\label{decompX3}
V_{\phi, X} = 
- \omega^\sharp \bigl(\dif \phi_* \varrho_{\phi, X} \bigr).
\end{equation}
Then \( X = U_{\phi, X} - j_\phi V_{\phi, X} \) and, 
using~\eqref{eq:kaehler:cartanConnection}, we get
\begin{equation}\label{decompX4}
\theta_\phi(X \circ \phi) = 
U_{\phi, X} + \I V_{\phi, X} \in 
\VectorFieldSpace (M, \omega) \oplus \I \, 
\HamVectorFields(M, \omega).
\end{equation}
Conversely, if \( U_X \in \VectorFieldSpace (M, \omega) \) 
and \( V_X \in \HamVectorFields(M, \omega) \), then we 
can define a vector field \( \bar{X} \) on 
\( \SectionSpaceAbb{P} \) by
\begin{equation}
\bar{X}_\phi = \bigl(U_X - j_\phi V_X \bigr) \circ \phi.
\end{equation}
Since \( \theta_\phi(\bar{X}_\phi) = U_X + \I V_X \) is 
constant as a function of \( \phi \), we refer to 
\( \bar{X} \) as a \( \theta \)-constant vector field.

If \( U_X, U_Y \in \VectorFieldSpace (M, \omega) \) 
and \( V_X, V_Y \in \HamVectorFields(M, \omega) \), 
then the Lie bracket of the corresponding 
\( \theta \)-constant vector fields is given by
\begin{equation}\label{eq:kaehler:commutatorBarFields}
\commutator{\bar{X}}{\bar{Y}}_\phi \circ \phi^{-1}
= \commutator{U_X}{U_Y} - j_\phi \commutator{U_X}{V_Y} 
+ j_\phi \commutator{U_Y}{V_X} + \commutator{V_Y}{V_X}.
\end{equation}

For the proof of this identity, it is helpful to 
abbreviate \( X(\phi) = U_X - j_\phi V_X \) and 
\( Y(\phi) = U_Y - j_\phi V_Y \).
For every smooth function \( \SectionSpaceAbb{F} \) 
on \( \SectionSpaceAbb{P} \), we have
\begin{equation}\begin{split}
\bar{X}\bigl(\bar{Y}(\SectionSpaceAbb{F})\bigr) (\phi)
&= \difFracAt{}{t}{0} \difFracAt{}{s}{0} \SectionSpaceAbb{F}
\Bigl(\flow^{Y\bigl(\flow^{X(\phi)}_t \circ \phi\bigr)}_s 
\circ \flow^{X(\phi)}_t \circ \phi\Bigr) \\
&= \difFracAt{}{t}{0} \difFracAt{}{s}{0} \SectionSpaceAbb{F}
\Bigl(\flow^{Y\bigl(\flow^{X(\phi)}_t \circ \phi\bigr)}_s 
\circ \phi\Bigr) \\
&\quad+ \difFracAt{}{t}{0} \difFracAt{}{s}{0} 
\SectionSpaceAbb{F}\Bigl(\flow^{Y(\phi)}_s \circ 
\flow^{X(\phi)}_t \circ \phi\Bigr) \\
&= \difFracAt{}{t}{0} (\dif \SectionSpaceAbb{F})_\phi 
\Bigl(Y\bigl(\flow^{X(\phi)}_t \circ \phi\bigr) 
\circ \phi\Bigr) \\
&\quad+ \difFracAt{}{t}{0} \difFracAt{}{s}{0} 
\SectionSpaceAbb{F}\Bigl(\flow^{Y(\phi)}_s \circ 
\flow^{X(\phi)}_t \circ \phi\Bigr).
\end{split}\end{equation}
Using the Baker--Campbell--Hausdorff formula 
\begin{equation*}
\flow^{Y}_{-s}\circ \flow^{X}_{-t} \circ 
\flow^{Y}_s \circ \flow^{X}_t = 
\flow^{\commutator*{X}{Y}}_{st} + O \bigl((st)^2\bigr)
\end{equation*} 
for vector fields on  \( M \), see, \eg, 
\parencite[Theorem~1.38]{Biagi2019}, 
we thus find
\begin{equation}\label{eq:kaehler:commutatorOnP}
\commutator*{\bar{X}}{\bar{Y}}_\phi \circ \phi^{-1} 
= \commutator*{X(\phi)}{Y(\phi)} + 
\difFracAt{}{t}{0} Y\Bigl(\flow^{X(\phi)}_t \circ \phi\Bigr)
- \difFracAt{}{t}{0} X\Bigl(\flow^{Y(\phi)}_t \circ \phi\Bigr).
\end{equation}
Now,
\begin{equation}\begin{split}
\difFracAt{}{t}{0} Y\Bigl(\flow^{X(\phi)}_t \circ \phi\Bigr)
&= - \difFracAt{}{t}{0} \Bigl(\flow^{X(\phi)}_t 
\circ \phi\Bigr)_* j \, (V_Y) \\
&= \difLie_{X(\phi)} j_\phi \, (V_Y) \\
&= \commutator{U_X - j_\phi V_X}{j_\phi V_Y} - 
j_\phi \commutator{U_X - j_\phi V_X}{V_Y}.
\end{split}\end{equation}
Inserting this into~\eqref{eq:kaehler:commutatorOnP}, expanding 
terms, and using the vanishing of the Nijenhuis tensor of 
\( j_\phi \) yields~\eqref{eq:kaehler:commutatorBarFields}.
	
To conclude that the curvature of \( \theta \)
vanishes, we have to show that 
\( \theta\bigl(\commutator{\bar{X}}{\bar{Y}}\bigr) + 
\commutator*{\theta(\bar{X})}{\theta(\bar{Y})} = 0 \).
First, by~\eqref{eq:kaehler:bracket},
\begin{equation}
\commutator*{\theta(\bar{X})}{\theta(\bar{Y})} 
= - \commutator{U_X}{U_Y} + \commutator{V_X}{V_Y} 
- \I \bigl(\commutator{U_X}{V_Y} + 
\commutator{V_X}{U_Y}\bigr).
\end{equation}
On the other hand, we conclude from~\eqref{eq:kaehler:commutatorBarFields} that
\begin{equation}
\theta \bigl(\commutator*{\bar{X}}{\bar{Y}}\bigr) 
= \commutator{U_X}{U_Y} - \commutator{V_X}{V_Y} + 
\I \bigl(\commutator{U_X}{V_Y} + \commutator{V_X}{U_Y}\bigr).
\end{equation}
They clearly sum up to zero, which proves the 
vanishing of the curvature.
	
Next, let \(Z^*\) be the vector field on 
\(\SectionSpaceAbb{P}\) induced by the \( j \)-holomorphic 
vector field \( Z \in \VectorFieldSpace(M) \).
That is, \( \flow^{Z^*}_t (\phi) = \phi \circ \flow^{Z}_t \).	
Let \(\bar{X}\) be the \( \theta \)-constant vector 
field corresponding to \(U \in \VectorFieldSpace(M, \omega)\) 
and \(V \in \HamVectorFields(M, \omega)\), \ie, 
\( \bar{X}_\phi \circ \phi^{-1} = U - j_\phi V \).
Since $Z$ preserves $j$, we find
\begin{equation}\begin{split}
\commutator*{Z^*}{\bar{X}}_\phi 
&= \difFracAt{}{t}{0} \tangent \flow^{Z^*}_{-t} 
\Bigl(\bar{X}_{\flow^{Z^*}_t(\phi)} \Bigr)
= \difFracAt{}{t}{0} \bar{X}_{\phi \circ \flow^{Z}_t} 
\circ \flow^{Z}_{-t}
			\\
&= \difFracAt{}{t}{0} \bigl(U - j_{\phi \circ 
\flow^{Z}_t} V\bigr) \circ \phi
= 0.
\end{split}\end{equation}
Hence, 
\begin{equation}\begin{split}
\bigl(\difLie_{Z^*}\theta\bigr)(\bar{X}) &=
\difLie_{Z^*} \bigl(\theta(\bar{X})\bigr)
- \theta\bigl(\commutator{Z^*}{\bar{X}}\bigr) = 0,
\end{split}\end{equation}
where we used that \( \theta(\bar{X}) \) is a 
constant function on \( \SectionSpaceAbb{P} \).
This completes the proof.
\end{proof}

\begin{proof}[Alternative proof]
We give a second proof of \cref{torsion} which uses 
horizontal lifts of constant vector fields on 
\( \SectionSpaceAbb{K} \) instead of \( \theta \)-constant 
vector fields as above. This proof is more in the spirit 
of the original calculation of the curvature of the 
affine connection by \textcite{Donaldson1999a}, see also 
\cref{rem:kaehler:affineConnectionCurvature}.

Let \( \varphi \in \SectionSpaceAbb{P} \) and 
\( X \circ \varphi \in \TBundle_\varphi \SectionSpaceAbb{P} \).
In order to calculate the curvature of \( \theta \), it is 
convenient to extend the tangent vector \( X \circ \varphi \) 
to a vector field on \( \SectionSpaceAbb{P} \).
For this purpose, recall that there exists a unique function
\( \varrho_{\varphi, X} \in \sFunctionSpace_0(M) \) such that 
\( \phi^* \difLie_X \omega = \dif \dif^c \varrho_{\varphi, X} \).
In the following, we will keep \( \varphi \) fixed and 
abbreviate \( \varrho_{\varphi, X} \) by \( \varrho_X \).
For every \( \phi \in \SectionSpaceAbb{P} \), define a 
vector field \( X(\phi) \) on \( M \) by the relation
\begin{equation}
\label{def_of_X_phi}
	\phi^* \bigl(X(\phi) \contr \omega\bigr) = \dif^c \varrho_X.
\end{equation}
Note that \( \varrho_X \) does not depend on \( \phi \) 
but only on the initially chosen point \( \varphi \).
Then, a vector field \( \tilde{X} \) on \( \SectionSpaceAbb{P} \) 
is defined by \( \tilde{X}_\phi = X(\phi) \circ \phi \) 
for every \( \phi \in \SectionSpaceAbb{P} \).
This vector field is indeed tangent to \( \SectionSpaceAbb{P} \) 
because the associated function \( \varrho_{\phi, \tilde{X}(\phi)} \) 
exists and is determined by
\begin{equation}
	\dif \dif^c \varrho_{\phi, \tilde{X}(\phi)} = \phi^* \difLie_{\tilde{X}(\phi)} \omega = \dif \dif^c \varrho_X.
\end{equation}
Hence, \( \varrho_{\phi, \tilde{X}(\phi)} = \varrho_X \).
Recall that the function \( \varrho_X \) geometrically 
corresponds to the projection of \( X \circ \varphi \) 
to a tangent vector in 
\( \TBundle_{\varphi^* \omega} \SectionSpaceAbb{K} \).
Thus, the two tangent vectors \( X \circ \varphi \) 
and \( \tilde{X}_\varphi \) differ only by a vertical 
vector. So, while \( \tilde{X} \) being not quite an 
extension of \( X \circ \varphi \), this difference 
will not affect the calculation of the curvature 
because the curvature of \( \theta \) vanishes when 
one of the arguments is vertical.

Before we proceed with the curvature calculation, we 
collect some useful properties of the vector field \( \tilde{X} \).
Using \( \varrho_{\phi, \tilde{X}(\phi)} = \varrho_X \) and~\eqref{def_of_X_phi}, we find
\begin{equation}
\theta_\phi \bigl(\tilde{X}_\phi\bigr) = 
\omega^\sharp \bigl( X(\phi) \contr \omega - 
\phi_* \dif^c \varrho_X \bigr) - 
\I \omega^\sharp \bigl(\dif \phi_* \varrho_X \bigr) 
= - \I \omega^\sharp \bigl(\dif \phi_* \varrho_X \bigr).
\end{equation}
This shows that \( \tilde{X} \) is the horizontal lift 
of the constant vector field on \( \SectionSpaceAbb{K} \) 
determined by \( \varrho_X \).
Moreover, the defining relation~\eqref{def_of_X_phi} 
of \( X(\phi) \) can be rewritten as
\begin{equation}
X(\phi) \contr \omega = 
\phi_* \dif^c \varrho_X = 
\dif^{c_\phi} \bigl(\phi_* \varrho_X\bigr).
\end{equation}
Using \( \omega^\sharp (\dif^{c_\phi} f) = 
- j_\phi X_f \), where \( X_f \) is the Hamiltonian 
vector field of \( f \), we equivalently have 
\( X(\phi) = - j_\phi X_{\phi_* \varrho_X} \).
If \( Y \circ \varphi \in 
\TBundle_\varphi \SectionSpaceAbb{P} \) is 
another tangent vector, then we can construct a 
vector field \( \tilde{Y} \) on \( \SectionSpaceAbb{P} \) 
in the same way. Then, the Lie bracket of \( \tilde{X} \) 
and \( \tilde{Y} \) as vector fields on 
\( \SectionSpaceAbb{P} \) is given by
\begin{equation}
\label{eq:kaehler:commutatorTildeXtildeY}
\commutator*{\tilde{X}}{\tilde{Y}}_\phi = 
- X_{\poisson{\phi_* \varrho_X}{\phi_* \varrho_Y}} \circ \phi.
\end{equation}
For the proof of this identity, we need a bit of preparation.

First, note that we have
\begin{equation}
\label{eq:kaehler:XphiAppliedToY}
\begin{split}
X(\phi) (\phi_* \varrho_Y) &= 
X(\phi) \contr \dif (\phi_* \varrho_Y) = 
\omega\bigl(X(\phi), X_{\phi_* \varrho_Y}\bigr) \\ 
&= \omega\bigl(X_{\phi_* \varrho_X}, 
j_\phi X_{\phi_* \varrho_Y}\bigr)
= \omega\bigl(Y(\phi), X_{\phi_* \varrho_X}\bigr) 
= Y(\phi) (\phi_* \varrho_X).
\end{split}\end{equation}
Second, since \( j_\phi \) is integrable, the commutator 
of the two vector fields \( X(\phi) \) and \( Y(\phi) \) 
on \( M \) is given by
\begin{equation}
\label{intermediary_1}
\commutator*{X(\phi)}{Y(\phi)} = 
- j_\phi \commutator*{X(\phi)}{X_{\phi_* \varrho_Y}} 
- j_\phi \commutator*{X_{\phi_* \varrho_X}}{Y(\phi)} 
+ \commutator*{X_{\phi_* \varrho_X}}{X_{\phi_* \varrho_Y}}.
\end{equation}
Now, 
\begin{equation}\begin{split}
\commutator*{X(\phi)}{X_{\phi_* \varrho_Y}} \contr \omega
&= \difLie_{X(\phi)} \bigl(X_{\phi_* \varrho_Y} \contr \omega\bigr) 
- X_{\phi_* \varrho_Y} \contr \difLie_{X(\phi)} \omega \\
&= - \dif \bigl(X(\phi) (\phi_* \varrho_Y)\bigr) - 
X_{\phi_* \varrho_Y} \contr \dif \dif^{c_\phi} \phi_* \varrho_X.
\end{split}\end{equation}
Using~\eqref{eq:kaehler:XphiAppliedToY} and 
\( j_\phi (Z \contr \alpha) = 
(j_\phi Z) \contr (j_\phi \alpha) \), we thus find
\begin{equation}
\label{intermediary_2}
\begin{split}
j_\phi &\commutator*{X(\phi)}{X_{\phi_* \varrho_Y}} 
+ j_\phi \commutator*{X_{\phi_* \varrho_X}}{Y(\phi)} \\ 
&= j_\phi \omega^\sharp \bigl(X_{\phi_* \varrho_X} 
\contr \dif \dif^{c_\phi} \phi_* \varrho_Y - 
X_{\phi_* \varrho_Y} \contr \dif \dif^{c_\phi} 
\phi_* \varrho_X\bigr) \\
&= \omega^\sharp \bigl( (j_\phi X_{\phi_* \varrho_X}) 
\contr j_\phi \dif j_\phi \dif \phi_* \varrho_Y - 
(j_\phi X_{\phi_* \varrho_Y}) \contr 
j_\phi \dif j_\phi \dif \phi_* \varrho_X\bigr) \\	
&= \omega^\sharp \bigl( X(\phi) 
\contr \dif^{c_\phi} \dif \phi_* \varrho_Y - 
Y(\phi) \contr \dif^{c_\phi} \dif \phi_* \varrho_X\bigr).
\end{split}\end{equation}
Moreover, we have
\begin{equation}
\label{intermediary_3}
\begin{split}
\commutator*{X(\phi)}{Y(\phi)} \contr \omega
&= \difLie_{X(\phi)} \bigl(Y(\phi) \contr \omega\bigr) 
- Y(\phi) \contr \difLie_{X(\phi)} \omega \\
&= \difLie_{X(\phi)} \dif^{c_\phi} (\phi_* \varrho_Y) 
- Y(\phi) \contr \dif \dif^{c_\phi} (\phi_* \varrho_X).
\end{split}\end{equation}
Thus, using \( \dif^{c_\phi} \dif = - \dif \dif^{c_\phi} \),~\eqref{intermediary_1},~\eqref{intermediary_2}, 
and~\eqref{intermediary_3}, we conclude
\begin{equation}
\label{eq:kaehler:commutatorXphiYphi}
\begin{split}
\commutator*{X(\phi)}{Y(\phi)}
&= \omega^\sharp \bigl( Y(\phi) \contr 
\dif^{c_\phi} \dif \phi_* \varrho_X - 
X(\phi) \contr \dif^{c_\phi} \dif \phi_* \varrho_Y\bigr)
\\ &\quad + X_{\poisson{\phi_* \varrho_X}{\phi_* \varrho_Y}} \\
&= \omega^\sharp \difLie_{Y(\phi)} \dif^{c_\phi} (\phi_* \varrho_X) 
- \omega^\sharp \difLie_{X(\phi)} \dif^{c_\phi} (\phi_* \varrho_Y)
\\ &\quad + 2 \commutator*{X(\phi)}{Y(\phi)} + 
X_{\poisson{\phi_* \varrho_X}{\phi_* \varrho_Y}}.
\end{split}\end{equation}

With these preparations, we are ready to calculate the 
commutator of the vector fields \( \tilde{X} \) and 
\( \tilde{Y} \) on \( \SectionSpaceAbb{P} \) and verify 
the claimed identity~\eqref{eq:kaehler:commutatorTildeXtildeY}.
Using~\eqref{eq:kaehler:commutatorOnP} (applied to 
\( \tilde{X} \) instead of \( \bar{X} \)) and the identity
\begin{equation}
\difFracAt{}{t}{0} X \Bigl(\flow^{Y(\phi)}_t \circ \phi\Bigr)
= \difFracAt{}{t}{0} \omega^\sharp \bigl( 
(\flow^{Y(\phi)}_t)_* \phi_* \dif^c \varrho_X \bigr)
= - \omega^\sharp \bigl( \difLie_{Y(\phi)} 
\dif^{c_\phi} (\phi_* \varrho_X) \bigr),
\end{equation}
we find
\begin{equation}\label{infinitesimal1}
\begin{split}
\commutator*{\tilde{X}}{\tilde{Y}}_\phi \circ \phi^{-1}
&= \commutator*{X(\phi)}{Y(\phi)} \\ 
&\quad+ \omega^\sharp \bigl( \difLie_{Y(\phi)} 
\dif^{c_\phi} (\phi_* \varrho_X) \bigr) - 
\omega^\sharp \bigl( \difLie_{X(\phi)} 
\dif^{c_\phi} (\phi_* \varrho_Y) \bigr) \\
&= - X_{\poisson{\phi_* \varrho_X}{\phi_* \varrho_Y}}, 
\end{split}\end{equation}
where we used~\eqref{eq:kaehler:commutatorXphiYphi} 
in the last step. This completes the proof 
of~\eqref{eq:kaehler:commutatorTildeXtildeY}.

Now, we can calculate the curvature \( \Omega \) 
of \( \theta \):
\begin{equation}\label{infinitesimal2}
\begin{split}
\Omega_\varphi(X \circ \varphi, Y \circ \varphi)
&= \Omega_\varphi(\tilde{X}_\varphi, \tilde{Y}_\varphi) \\
&= \tilde{X}_\varphi \bigl(\theta(\tilde{Y})\bigr) 
- \tilde{Y}_\varphi \bigl(\theta(\tilde{X})\bigr) - 
\theta_\varphi\bigl(\commutator{\tilde{X}}{\tilde{Y}}\bigr)\\
&\quad- \commutator*{\theta_{\varphi} \bigl(
\tilde{X}\bigr)}{\theta_{\varphi} \bigl(\tilde{Y}\bigr)} \\
&= \I \omega^\sharp \Bigl(\dif \difLie_{X(\varphi)} (\varphi_* \varrho_Y)
- \dif \difLie_{Y(\varphi)} (\varphi_* \varrho_X)\Bigr) \\
&\quad+ X_{\poisson{\varphi_* \varrho_X}{\varphi_* \varrho_Y}} - 
\commutator{X_{\varphi_* \varrho_X}}{X_{\varphi_* \varrho_Y}},
\end{split}\end{equation}
where we recall the convention~\eqref{eq:kaehler:bracket} 
of the Lie bracket on the Klein pair (which is used in 
the last equality). The imaginary part vanishes due to~\eqref{eq:kaehler:XphiAppliedToY}, so we conclude 
that \( \theta \) is flat.

Next, we prove the second statement of the proposition.
Let \( Z^* \) be the vector field on \( \SectionSpaceAbb{P} \) 
induced by the \( j \)-holomorphic vector field \( Z \) 
on \( M \), \ie, \( Z^*_\phi = \tangent \phi \circ Z \).
First, note that if \( X \) is a symplectic vector field, then
\begin{equation}
\difLie_{Z^*} \theta (X^*) = 
\difLie_{Z^*} \bigl(\theta (X^*)\bigr) - 
\theta \bigl(\commutator{Z^*}{X^*}\bigr) = 0,
\end{equation}
because \( \theta(X^*) = X \) is constant on 
\( \SectionSpaceAbb{P} \) and because the left and 
right actions of \( \DiffGroup(M, \omega) \) and of 
\( \DiffGroup(M, j) \) on \( \SectionSpaceAbb{P} \) 
commute. Thus, in order to show that the Lie derivative 
of \( \theta \) with respect to a holomorphic vector 
field vanishes, it suffices again to test this only
against the horizontal vector field \( \tilde{X} \).
A similar calculation as above shows that
\begin{equation}
	\commutator*{Z^*}{\tilde{X}}_\phi \circ \phi^{-1} = \difFracAt{}{t}{0} X\bigl(\phi \circ \flow^{Z}_t\bigr)
		= - \omega^\sharp \phi_* \dif^c \bigl(\difLie_Z \varrho_X\bigr).
\end{equation}
Since \( \difLie_{\omega^\sharp (\dif^c f)} \omega = \dif \dif^c f \), we find \( \theta_\phi \bigl(\commutator{Z^*}{\tilde{X}}\bigr) = \I \omega^\sharp \bigl(\dif \phi_* \difLie_Z \varrho_X\bigr) \).
Thus,
\begin{equation}
	\difLie_{Z^*} \theta (\tilde{X}) = \difLie_{Z^*} \bigl(\theta (\tilde{X})\bigr) - \theta \bigl(\commutator{Z^*}{\tilde{X}}\bigr) = 0.
\end{equation}
This finishes the proof.
\end{proof}

In finite dimensions, every flat Cartan geometry is locally
isomorphic, as a Cartan geometry, to a Klein bundle 
\( A \to G \slash A \) for a Lie group \( A \), endowed 
with its Maurer--Cartan form, 
see \parencite[Theorem~5.5.1]{Sharpe1997}.
However, this integration result does not hold in infinite 
dimensions. The flat Cartan bundle 
\( (\SectionSpaceAbb{P}, \theta) \) is the best possible 
\textquote{integration} of the Lie algebra 
\( \VectorFieldSpace(M, \omega) \oplus \I \,
 \HamVectorFields(M, \omega) \).

\begin{remark}[Associated affine connection]
\label{rem:kaehler:affineConnectionCurvature}	
Since the curvature of the Cartan connection vanishes, 
the curvature of the associated affine connection (seen as 
a horizontal \( 2 \)-form on \( \SectionSpaceAbb{P} \) 
with values in \( \VectorFieldSpace(M, \omega) \)) 
is given by
\begin{equation}
R = \frac{1}{2} \wedgeLie*{\Im \theta}{\Im \theta}.
\end{equation}
In other words,
\begin{equation}\begin{split}
R_\phi(X \circ \phi, Y \circ \phi) \circ \phi^{-1}
= \commutator*{X_{\phi_* \varrho_{\phi, X}}}{Y_{\phi_* \varrho_{\phi,Y}}}
= X_{\poisson{\phi_*\varrho_{\phi, X}}{\phi_* \varrho_{\phi, Y}}}.
\end{split}\end{equation}
We may also view \( R \) as a \( (3,1) \)-tensor on 
\( \SectionSpaceAbb{K} \) and then under the 
isomorphism~\eqref{eq:kaehler:tangentBundleIso} obtain
\begin{equation}
R_{\sigma}(\rho_1, \rho_2, \rho_3) 
= - \poisson*{\poisson{\rho_1}{\rho_2}_{\sigma}}{\rho_3}_{\sigma},
\qquad \sigma \in \SectionSpaceAbb{K}, \rho_1, \rho_2, \rho_3 \in
\sFunctionSpace_0(M).
\end{equation}
This gives an alternate proof of the expression for the curvature of 
\parencite[Theorem~1]{Donaldson1999a}.
\end{remark}
\begin{remark}[Complex structure on 
\( \VectorFieldSpace (M, \omega) \oplus \I \, 
\HamVectorFields(M, \omega) \)] 
Since the situation is the same at any 
\(\phi \in \SectionSpaceAbb{P}\), it suffices to choose 
\( \phi = \id \). Let $X\in \VectorFieldSpace (M, \omega)$. 
As \( X \contr \omega \) is closed,
we can write, using the Hodge decomposition,
\begin{equation*}
X \contr\omega = \alpha_X + \dif\tau_X,
\end{equation*}
with \(\alpha_X\) a harmonic 1-form and \(\tau_X\) 
a smooth function. On a K\"ahler manifold, we have 
the decomposition for the complexification of the 
space \(\mathbb H^1\) of harmonic \( 1 \)-forms
\begin{equation}
\mathbb H^1\otimes \mathbb C = 
\mathbb H^{1,0} \oplus \overline{\mathbb H^{1,0}},
\end{equation}
and \(\mathbb H^1\) can be identified with 
\(\mathbb H^{1,0}\). Hence,
\begin{equation}
\VectorFieldSpace (M, \omega) \oplus 
\I \, \HamVectorFields(M, \omega) \cong 
\omega^\sharp\mathbb H^{0,1} \oplus 
(\HamVectorFields(M, \omega) \otimes\mathbb C).
\end{equation}
Thus, \(\VectorFieldSpace (M, \omega) \oplus 
\I \, \HamVectorFields(M, \omega)\) has a natural 
complex structure. Of course, the complex structure 
of \(\omega^\sharp\mathbb H^{0,1}\) is induced 
from \(j\) as its \((0,1)\)-forms.
\end{remark}

\begin{remark}\label{Rem0}
In the subsections below, we consider different 
symplectic structures on \(\mathcal I\) which 
are preserved by the natural \(\DiffGroup(M,\omega)\) 
action. Since \(\mathcal I\) is contractible 
there is a (non-equivariant) momentum map 
\( \SectionSpaceAbb{J} \) for the action 
of $\DiffGroup(M,\omega)$ on those symplectic 
structures on \(\mathcal I\), see,
\eg, \parencite[Proposition~2.1]{DiezRatiuSymplecticConnections}.
Due to the simple form of the Lie algebra 
\( \mathfrak{a} = \VectorFieldSpace (M, \omega) \oplus 
\I \, \HamVectorFields(M, \omega) \), the condition 
that the momentum map is \( \mathfrak{a} \)-invariant 
is equivalent to the equivariance of the momentum map 
for the action of the subgroup of Hamiltonian 
diffeomorphism, see 
\cref{prop:cartan:momentumMapEquivarianceComplexCase}. 
This equivariance property holds in the cases considered 
below. Thus, taking $M$ in the statement of 
\cref{prop:cartan:constFutaki} to be \(\mathcal I\), 
\cref{prop:cartan:constFutaki} can be applied in the 
subsections below.
\end{remark}
\begin{remark}[Futaki character]\label{Rem1} 
By the arguments in the previous section, 
the Futaki invariant descends from \(\mathcal P\)
to the base \(\mathcal K\). Furthermore, the tangent 
bundle \(\TBundle \mathcal P\) has a natural
horizontal distribution consisting of all
\(X \in \TBundle \mathcal P\) such 
that $\theta(X)$ is purely imaginary in
\( \VectorFieldSpace (M, \omega) \oplus \I \, 
\HamVectorFields(M, \omega) \). 
Thus, by~\eqref{eq:futaki:generalizedFutakiInvariant}, 
the Futaki invariant \(\widetilde{\mathcal F}_\zeta : 
\mathcal{P} \to \mathbb{R}\) 
on the connected component of 
\(\id_M\in \mathcal P\) can be expressed as
\begin{equation}
\widetilde{\mathcal F}_\zeta (\id_M) = 
- \langle \SectionSpaceAbb{J}(j),\omega^\sharp 
\dif \varrho_\zeta\rangle,
\end{equation}
where \(j=\chi(\id_M)\) and \( \dif\dif^c\varrho_\zeta 
= \difLie_\zeta \omega \). Note that the momentum map 
\( \SectionSpaceAbb{J} \) is paired with a 
\emph{Hamiltonian} vector field
and hence the Futaki invariant only depends on the 
momentum map for the subgroup of Hamiltonian 
diffeomorphisms.
\end{remark}
\begin{remark}
\Cref{auto} applies for the following reason. 
Suppose \(\chi(\id_M)=j\). Then \(\mathfrak a_j\) 
is the Lie algebra of all holomorphic vector fields 
with respect to \(j\). For every 
\( \phi \in \SectionSpaceAbb{P} \), we have
\( \ker \tangent_\phi \chi = \phi_\ast {\mathfrak a}_j \) 
because \( \phi \) is a biholomorphism  of \((M,j)\) 
to \((M,\phi_\ast j)\). On the other hand, 
\( \phi \ldot \LieA{a}_j \) consists of all tangent 
vectors to the curves \( \phi(c(t))\) at \(t=0\), where
the tangent vector of \(c(t)\) at \(t=0\) lies in 
\( {\mathfrak a}_j \). Thus \( \ker \tangent_\phi \chi \) 
and \( \phi \ldot \LieA{a}_j \) coincide.
\end{remark}
\begin{remark}[Geodesics]
Recall that the space \(\SectionSpaceAbb{K}\) of all 
K\"ahler forms in the cohomology class \([\omega]\) 
consists of K\"ahler forms that can be written as
\( \omega_\varrho \defeq \omega + \dif\dif^c\varrho \).
Tangent vectors are smooth functions \(u\) modulo constant 
functions, as discussed before~\eqref{eq:kaehler:tangentP}.
It is convenient to normalize \(u\) to satisfy 
\(\int_M u \omega_\varrho^n = 0\). The 
 norm-squared of the canonical Riemannian 
metric of \( \SectionSpaceAbb{K} \) is given by 
\(u \mapsto \int_M u^2 \omega_\varrho^n\). Then, a 
sufficiently smooth curve \(\varrho(t)\) in 
\( \SectionSpaceAbb{K} \), \(a\le t \le b\),  
is a metric geodesic if
\begin{equation}
\ddot{\varrho} - \norm{\bar{\partial}\dot{\varrho}}^2 = 0,
\end{equation}
see \parencite[p.~17]{Donaldson1999a}. By \cref{prop:cartan:riemannianConnection,prop:cartan:geodesicsAsProjections}, 
these geodesic curves arise exactly as projections of integral 
curves of certain \( \theta \)-constant vector fields on 
\( \SectionSpaceAbb{P} \). That is, metric geodesics 
on \( \SectionSpaceAbb{K} \) correspond to geodesics of 
the Cartan connection.
\end{remark}

\subsection{Constant scalar curvature 
K\"ahler metrics}
In this subsection, we consider the problem of finding 
constant scalar curvature metrics in the K\"ahler class 
$\mathcal K$. After the resolution of the existence 
problem of K\"ahler-Einstein metrics in the
negative case by \textcite{Aubin1976} and 
\textcite{Yau1978} and the zero case by \textcite{Yau1978}, 
it was conjectured by \textcite{Yau1993} that
the existence of general cscK metrics should be 
characterized by an algebraic condition
related to geometric invariant theory (GIT for 
short). A notion, called K-stability, inspired by 
ideas from GIT, was formulated by \textcite{Tian1997} 
for the study of the existence of positive K\"ahler-Einstein 
metrics on Fano manifolds, using special degenerations, and 
by \textcite{Donaldson2002} for the general cscK metrics
on polarized K\"ahler manifolds, using test configurations. 
Recall that a pair $(M,L)$ of a compact complex manifold $M$
and an ample line bundle $L$ is called a polarized manifold.
A test configuration (or special degeneration) is a 
$\mathbb C^\ast$-equivariant degeneration of $(M,L)$.
K-stability is tested by the sign of the 
Donaldson-Futaki invariant (or generalized Futaki invariant, 
\ie, an algebraic reformulation of the Futaki invariant 
valid for singular varieties) of the central fiber of 
the degeneration. The Yau--Tian--Donaldson conjecture 
asserts that the existence of cscK metrics on a polarized 
K\"ahler manifold should be equivalent to 
K-polystability of the polarized manifold. This conjecture 
has been confirmed by \textcite{ChenDonaldsonSun2015a,Tian2015} 
for the positive K\"ahler-Einstein 
case on Fano manifolds and later by 
\parencite{DatarSzekelyhidi2016,ChenSunWang2018,
BermanBoucksomJonsson2021,Li2022,LiuXuZhuang2022}
using other methods. The key tool of the proofs of 
\parencite{ChenDonaldsonSun2015a,Tian2015,
DatarSzekelyhidi2016,ChenSunWang2018} 
is the theory of Gromov--Hausdorff convergence.
On the other hand, the proof of 
\textcite{BermanBoucksomJonsson2021} is based 
on the variational method (not using 
Gromov--Hausdorff convergence) 
and the notion called uniform stability was assumed.
Uniform stability implies that the automorphism group 
is discrete. For a non-discrete automorphism group $G$, 
\textcite{Li2022} relaxed the condition to $G$-uniform stability.
In \parencite{LiuXuZhuang2022}, it is shown that when $G$ 
contains the maximal torus of the full automorphism group, 
then $G$-uniform stability is equivalent to K-stability 
and, as a conclusion, \parencite{BermanBoucksomJonsson2021,Li2022}
give an alternate proof for positive K\"ahler-Einstein metrics.

For an arbitrary K\"ahler manifold, \textcite{BermanDarvasLu2020} 
have proven that the existence of a cscK metric implies 
K-polystability. However, the converse is still 
open.

The existence problem of cscK metrics fits into the 
symplectic framework as shown by \textcite{Fujiki1992,Donaldson1997}.
In particular, their work showed that cscK metrics 
are the zeros of a momentum map on an infinite-dimensional 
symplectic manifold. This picture, combined with the usual 
finite-dimensional Kempf--Ness theory, gave a
philosophical motivation for the Yau--Tian--Donaldson 
conjecture, and, moreover, is generally regarded to 
give the right strategy for its proof.
However, until now, no proper infinite-dimensional 
framework has been available to put these ideas into 
a rigorous setting.

Now we apply \cref{prop:cartan:constFutaki} to 
the Fujiki--Donaldson picture. In this case, we consider a 
compact symplectic manifold \((M,\omega)\) and the infinite 
dimensional space \(\mathcal{I}\) of \( \omega \)-compatible 
complex structures endowed
with a suitable K\"ahler structure described below. 
The scalar curvature is the momentum map with respect 
to the action of the Hamiltonian diffeomorphism
group. The same conclusion is also obtained in 
\parencite[Corollary~5.7]{DiezRatiuSymplecticConnections} 
under the sign conventions of the momentum map~\eqref{momentum1} 
and the sign convention of the symplectic
form and the complex structure as described in 
\cref{prop:cartan:kempfNessConvexity}.
Thus, we can apply \cref{prop:cartan:constFutaki} and 
\cref{prop:cartan:kempfNess:positive} 
to obtain the Futaki invariant and the Mabuchi 
K-energy as follows. 

We identify the space of Hamiltonian vector fields with 
the space of average-zero Hamiltonian functions 
\(\sFunctionSpace_0(M)\). Using the symplectic structure 
of \(\mathcal I\) described below, the momentum map image
\(J(j)\) of \(j \in \mathcal I\) is expressed as the 
\(L^2\)-pairing of the scalar curvature \(S_j\) of the 
K\"ahler manifold \((M,\omega,j)\) with the Hamiltonian functions.
In \cref{Rem1}, the Hamiltonian vector field 
\( \omega^\sharp \dif\varrho_\zeta \)
is identified with \( \varrho_\zeta - 
\int_M \varrho_\zeta \, \mu_\omega/\int_M \mu_\omega \).                                                                                   
Thus, defining 
\begin{equation}
\label{eq:kahler:averagezero}
u_0 \defeq \int_M u \mu_\omega\Big/\!\int_M \mu_\omega 
\in \mathbb{R}
\end{equation}
for \(u \in \sFunctionSpace(M)\) and noting that 
\(\kappa_{\mathfrak a}\) is the negative of the 
\(L^2\)-pairing, \cref{prop:cartan:constFutaki} and  
\cref{Rem1} yield
\begin{equation}
 \widetilde{\mathcal F}_\zeta (\id_M) = 
 - \int_M S_j\, \left(\varrho_\zeta 
 - {(\varrho_\zeta)}_0\right)\mu_\omega
= - \int_M (S_j - {(S_j)}_0)\,\varrho_\zeta\ \mu_\omega. 
\label{obst1}
\end{equation}
This recovers the Futaki invariant as defined in 
\parencite{Futaki1983}.
If there is an extremal metric such that \(\grad S\) 
is a non-zero holomorphic vector field, then 
\begin{equation}
-\int_M (S-S_0)S \omega^n = 
-\int_M (S-S_0)^2 \omega^n < 0,
\end{equation}
where \(n=\dim M/2\). This indicates that if a cscK 
metric does not exist, then \((M,L)\) 
should be K-unstable. Of course, an extremal K\"ahler
metric may not exist, and this is not a rigorous proof.

To describe the symplectic structure of $\mathcal{I}$, 
assume that \(j \in \mathcal{I}\) acts on the cotangent bundle
rather than the tangent bundle. Fixing \(j \in \mathcal{I}\),
we decompose the complexified cotangent bundle into 
holomorphic and anti-holomorphic parts,
\ie, the \(\pm \sqrt{-1}\)-eigenspaces of \(j\):
\begin{equation}
\CotBundle M\otimes\bfC = \TBundle_j^{\ast\prime}M \oplus 
\TBundle_j^{\ast\prime\prime}M,  \qquad 
\TBundle_j^{\ast\prime\prime}M = \overline{\TBundle_j^{\ast\prime}M}.
\end{equation}
Taking an arbitrary \(j^{\prime} \in \mathcal I\), we 
also have the decomposition with respect to \(j^{\prime}\):
\begin{equation}
\CotBundle M\otimes\bfC = \TBundle_{j^{\prime}}^{\ast\prime}M \oplus 
\TBundle_{j^{\prime}}^{\ast\prime\prime}M,  \qquad 
\TBundle_{j^{\prime}}^{\ast\prime\prime}M = 
\overline{T_{j^{\prime}}^{\ast\prime}M}.
\end{equation}
If \(j^{\prime}\) is sufficiently close to \(j\), then 
\(\TBundle_{j^{\prime}}^{\ast\prime}M\) can be expressed as 
a graph over \(\TBundle_j^{\ast\prime}M\), namely, 
\begin{equation}
	\TBundle_{j^{\prime}}^{\ast\prime}M = \set*{\alpha + 
\mu(\alpha) \given \alpha \in \TBundle_j^{\ast\prime}M}
\end{equation}
for some homomorphism \( \mu \) of \(\TBundle_j^{\ast\prime}M\) 
into \(\TBundle_j^{\ast\prime\prime}M\), \ie, 
\begin{equation}
\mu \in \Gamma \bigl(\mathrm{Hom}(\TBundle_j^{\ast\prime}M, 
\TBundle_j^{\ast\prime\prime}M)
\bigr) 
\cong  \Gamma \bigl(\TBundle_j^{\prime}M \otimes 
\TBundle_j^{\ast\prime\prime}M\bigr) \cong 
\Gamma\bigl(\TBundle_j^{\prime}M\otimes \TBundle_j^{\prime}M\bigr),
\end{equation}
where the second isomorphism is given by the K\"ahler 
metric defined by the pair \((\omega, j)\). This can be
expressed in the notation of tensor calculus with indices as 
\begin{equation}
\mu^i{}_{\bark} \mapsto g^{j\bark}\mu^i{}_{\bark}=: \mu^{ij}, 
\end{equation}
where we chose a local holomorphic coordinate system
\((z^1, \ldots, z^n)\) on \((M,j)\) and \( \omega \) is written 
as \( \omega = \sqrt{-1}\ g_{i\barj} dz^i \wedge dz^\barj \).
Then one can see that \( \mu \) lies in the symmetric part 
\(\Gamma(\mathrm{Sym}(\TBundle_j^{\prime}M\otimes \TBundle_j^{\prime}M))\) 
of \(\Gamma(\TBundle_j^{\prime}M\otimes \TBundle_j^{\prime}M)\),
see, \eg, \parencite{Futaki2006}.
Hence, the tangent space \(\TBundle_j\mathcal{I}\) to 
\(\mathcal{I}\) at \(j\) is a subspace of 
\(\mathrm{Sym}(\TBundle_j^{\prime}M\otimes \TBundle_j^{\prime}M)\).
Then the \(L^2\)-inner product on 
\(\mathrm{Sym}(\TBundle_j^{\prime}M\otimes \TBundle_j^{\prime}M)\)
endows \(\mathcal{I}\) with a K\"ahler structure. 
Taking its imaginary part, we obtain a symplectic 
structure on \(\mathcal{I}\).

Since \(\kappa_{\mathfrak a}\) is minus the \(L^2\) 
inner product in this case, the Kempf--Ness functional 
defined in \cref{prop:cartan:kempfNess} takes the form:
\begin{equation}\label{Kempf1}
E(\varrho)=- \int_0^1 d t \int_M \dot{v}_t\left(S\left(
\omega_{v_t}\right)-{S}_0\right) \omega_{v_t}^n,
\qquad \varrho \in \sFunctionSpace_0(M),
\end{equation}
where \(v_{t}\), \(0 \leq t \leq 1\), is a 
smooth path between \(0\) and \( \varrho \)
such that \(\omega_{v_{t}} \defeq \omega + 
\I\partial\barpartial \, v_t >0\), \(S(\omega_{v_t})\) 
is the scalar curvature of the K\"ahler form \(\omega_{v_t}\),
and \(S_0\) is its average, \cf~\eqref{eq:kahler:averagezero}.
In this way, we recover the K-energy functional of 
\textcite{Mabuchi1987}, which is convex along geodesics.

\Textcite{Tian1994} gave a more explicit expression of 
the K-energy:
\begin{equation}\label{ChenTian}
E(\varrho)=\operatorname{Ent}(\varrho)+
S_0 \operatorname{AM}(\varrho)-
n \operatorname{AM}_{\operatorname{Ric}(\omega)} (\varrho), 
\end{equation}
where
\begin{equation}	
\operatorname{Ent}(\varrho)= 
\int_M \log \left(\omega_\varrho^n / \omega^n\right) 
\omega_\varrho^n,
\end{equation}
\begin{equation}
\operatorname{AM}_\omega (\varrho)=
\operatorname{AM}(\varrho)=\frac{1}{(n+1)} 
\sum_{j=0}^n \int_M \varrho \omega_\varrho^j \wedge \omega^{n-j}, 
\end{equation}
and 
\begin{equation}
\operatorname{AM}_\chi(\varrho)=\frac{1}{n V} 
\sum_{j=1}^{n} \int_M \varrho \, \chi \wedge 
\omega_\varrho^{j-1} \wedge \omega^{n-j},
\end{equation}
for closed \((1,1)\)-forms \( \chi \). 
\(\operatorname{Ent}(\varrho)\) is called the entropy 
and \(\operatorname{AM}(\varrho)\) is called the 
Monge--Amp\`ere energy. See also \parencite{FutakiNakagawa2001} 
for an intrinsic derivation of~\eqref{ChenTian}.

Mabuchi also introduced geodesics in \(\mathcal{K}\)
and showed the convexity of the K-energy along 
the geodesics (\parencite{Mabuchi1987a}, see also 
\parencite{Semmes1992,Donaldson1999a}). The slope of 
\(E(\varrho)\) was studied by \textcite{Donaldson1999a} 
and \textcite{ChenTang2007}. \Textcite{PhongSturm2007} 
showed that for each test configuration there corresponds 
a weak geodesic ray. \Textcite{BoucksomHisamotoJonsson2017} showed 
that the slopes of each term of~\eqref{ChenTian} is 
expressed by an algebraic invariant related to the 
minimal model program in algebraic geometry. The slopes
obtained in this way are called non-Archimedean 
functionals. The uniform stability assumed in
\parencite{BermanBoucksomJonsson2021,Li2022} is expressed 
in terms of non-Archimedean
functionals. It is known that the slope of the K-energy 
on a Fano manifold is the Donaldson-Futaki
invariant when the central fiber is reduced.

Let \(G\) be the maximal compact subgroup of the 
automorphism group of \(M\). Then \(\mathfrak a_j\) is equal to 
\(\mathfrak g\otimes \mathbb C\). Applying \cref{extremal1}, 
we obtain the following intrinsic bilinear form.

\begin{theorem}
Choose any \(G\)-invariant K\"ahler metric. For any 
\(X \in \mathfrak g\), we consider the Hamiltonian function 
\(u_X\) with respect to the K\"ahler form \(\omega\) with 
normalization \(\int_M u_X \omega^n = 0\). 
The \(L^2\)-inner product of the normalized Hamiltonian 
functions is independent of the choice of the \(G\)-invariant 
metric and also independent of the choice of the maximal 
compact subgroup \(G\).
\end{theorem}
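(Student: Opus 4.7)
The plan is to apply \cref{extremal1} to the Cartan bundle $\SectionSpaceAbb{P} \to \SectionSpaceAbb{K}$ restricted to the subset $\SectionSpaceAbb{K}^G$ of $G$-invariant K\"ahler forms, which is path-connected since it is an affine space of $G$-invariant potentials. Fix a $G$-invariant reference K\"ahler form $\omega$ and define the symmetric bilinear form $\Xi$ on $\LieA{a} = \VectorFieldSpace(M, \omega) \oplus \I\HamVectorFields(M, \omega)$ by
\begin{equation}
\Xi(U_1 + \I V_1, U_2 + \I V_2) \defeq \int_M u_{V_1} u_{V_2}\, \omega^n,
\end{equation}
where $u_V \in \sFunctionSpace_0(M)$ is the normalized Hamiltonian of $V$. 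This form vanishes on $\LieA{g} \times \LieA{a}$, is non-degenerate on $\LieA{a}/\LieA{g}$, and is $\AdAction_G$-invariant since elements of $\DiffGroup(M, \omega)$ preserve $\omega^n$.

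First I would identify $\Xi_b$ from \eqref{eq:cartan:extremalElements:pullbackForm} with the $L^2$-pairing at $\omega' \defeq \phi^*\omega$. For $X \in \mathfrak{g}$ Hamiltonian, the element $\I X \in \LieA{a}_j$ acts on $\SectionSpaceAbb{P}$ by right translation via the corresponding real holomorphic vector field on $M$, and a direct calculation using~\eqref{eq:kaehler:cartanConnection}, the identity $jX \contr \omega' = -\dif^c u^{\omega'}_X$ (valid because $\omega'$ is $j$-compatible), and the push-forward rule~\eqref{eq:kaehler:pushforwardDifc} shows that the real component of $\theta_\phi(\phi \ldot \I X)$ vanishes while its imaginary component is $\I X_{\phi_* u^{\omega'}_X}$. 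Change of variables then yields
\begin{equation}
\Xi_b(\I X, \I Y) = \int_M u^{\omega'}_X u^{\omega'}_Y\, (\omega')^n,
\end{equation}
exactly the desired $L^2$-inner product at $\omega'$.

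The core step is to verify, using the derivative identity in \cref{extremal1}, that $\nabla_\xi(\Xi_{(\cdot)}(\I X, \I Y)) = 0$ for every $\xi \in \LieA{a}$. Writing $\xi = A + \I B$ with $A \in \VectorFieldSpace(M, \omega)$ and $B \in \HamVectorFields(M, \omega)$ and using the Klein-pair bracket~\eqref{eq:kaehler:bracket}: for the $A$-component, $\commutator{A}{\I X_u}$ has imaginary part $\I X_{A(u)}$, so the two terms combine to $\int_M \difLie_A(u_{X_1} u_{X_2})\, \omega^n$, which vanishes because $A$ preserves $\omega^n$; for the $\I B$-component, $\commutator{\I B}{\I X_u} = \commutator{B}{X_u} \in \LieA{g}$, on which $\Xi$ vanishes. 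Path-connectedness of $\SectionSpaceAbb{K}^G$ then gives independence from the choice of $G$-invariant K\"ahler form.

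For independence from $G$: any two maximal compact subgroups of the complex automorphism group of $(M, j)$ are conjugate, say $G' = \psi G \psi^{-1}$ for a biholomorphism $\psi$. Then $\psi_*\omega$ is $G'$-invariant, $\psi_* X \in \mathfrak{g}'$ for $X \in \mathfrak{g}$, and $u^{\psi_*\omega}_{\psi_* X} = u^\omega_X \circ \psi^{-1}$ by naturality, so change of variables gives $\int_M u^{\psi_*\omega}_{\psi_* X} u^{\psi_*\omega}_{\psi_* Y}\, (\psi_*\omega)^n = \int_M u^\omega_X u^\omega_Y\, \omega^n$. Combined with the first part, this yields the full independence claim. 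The main technical obstacle is the careful tracking of sign conventions in the computation of $\theta_\phi(\phi \ldot \I X)$, where the push-forward of $\dif^c$ involves the varying complex structure $j_\phi = \phi_* j$ rather than the fixed $j$.
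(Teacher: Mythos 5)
Your proposal is correct and runs on the same engine as the paper: the paper's entire proof consists of applying \cref{extremal1} to the K\"ahler Cartan bundle \( \SectionSpaceAbb{P} \to \SectionSpaceAbb{K} \), with the \( L^2 \)-pairing of the imaginary (Hamiltonian) components playing the role of \( \Xi \), which is exactly your setup; your computation of \( \theta_\phi(\phi \ldot \I X) \) at points over \( G \)-invariant forms and the resulting identification of \( \Xi_b \) with the \( L^2 \)-product at \( \phi^*\omega \) is the intended content. Where you genuinely deviate is in how constancy and \( G \)-independence are obtained. You verify the vanishing of the derivative in \cref{extremal1} by hand at points over \( \SectionSpaceAbb{K}^G \) (where \( \theta_\phi(\phi\ldot\I X) \) is purely imaginary) instead of invoking the \( \adAction_{\LieA{a}} \)-invariance clause of the lemma; this is a sensible refinement, since the obvious \( \Xi \) is \( \AdAction_{\DiffGroup(M,\omega)} \)-invariant but the full \( \adAction_{\LieA{a}} \)-invariance identity fails for \( \xi \in \I\,\HamVectorFields(M,\omega) \) against mixed elements, so the lemma's ``in particular'' cannot be quoted verbatim over all of \( \SectionSpaceAbb{K} \); the price is that you only get constancy on the convex (hence path-connected, though not affine) set \( \SectionSpaceAbb{K}^G \), which suffices for the first claim. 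For independence of \( G \), the paper's implicit route is that \( \Xi_b \) is an intrinsic bilinear form on \( \LieA{a}_j = \mathfrak{g}\otimes\C \) making no reference to \( G \), whereas your conjugacy argument only shows that the forms attached to \( G \) and \( G' = \psi G \psi^{-1} \) correspond under \( \AdAction_\psi \); to get, say, equality of the pairings for a field lying in two different maximal compacts (or equality of the complex-bilinear extensions to the holomorphic fields), either invoke the intrinsic description or rerun your first argument over \( \SectionSpaceAbb{K}^K \) for a compact subgroup \( K \subseteq G \cap G' \) containing the flows of the fields in question --- your method yields this with no extra work. Two small points: ``restricting the Cartan bundle to \( \SectionSpaceAbb{K}^G \)'' should be read as evaluating the derivative identity of \cref{extremal1} at points over \( \SectionSpaceAbb{K}^G \) (the restriction is not itself a Cartan geometry), and with the bracket convention~\eqref{eq:kaehler:bracket} the imaginary part of \( \commutator{A}{\I X_u} \) is \( -\I X_{A(u)} \), a sign that is immaterial to your cancellation.
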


This recovers the bilinear form obtained in 
\parencite{FutakiMabuchi1995}. Furthermore, the extremal 
element defined 
in~\eqref{eq:cartan:extremalElements:extremalCondition} 
is the extremal K\"ahler vector field, 
see \parencite[Theorem~3.3.3]{Futaki1988}, 
\parencite{FutakiMabuchi1995}.

\begin{remark}[Calabi operator]\label{momentum2}
The action \(\rho(\xi)\) is given by the infinitesimal action 
of \(\xi \in \mathfrak a\) on \( \SectionSpaceAbb{I} \) at 
\( j \in \SectionSpaceAbb{I} \), where we assume 
\( \rho(\xi_1 + \I \xi_2) = \rho(\xi_1) + j \rho(\xi_2) \).
Let us take \(\xi=X_u\), the Hamiltonian vector field defined 
by the smooth function \(u\). Then, writing 
\(X_u = X^{\prime} + X^{\prime\prime} \in 
\sSectionSpace(\TBundle^\prime M \oplus 
\TBundle^{\prime\prime}M)\),
the infinitesimal action \( \rho \) at \(j\) is expressed as
\begin{equation}
\difLie_X j = 2\sqrt{-1} \nabla_j^{\prime\prime} X^{\prime}
	- 2\sqrt{-1} \nabla_j^{\prime} X^{\prime\prime},
\end{equation}
where $\nabla^{\prime}_j$ and $\nabla^{\prime\prime}_j$ are,
respectively, the $(1,0)$-part and $(0,1)$-part of the 
covariant derivative $\nabla_j$,
see \parencite[Lemma~2.3]{Futaki2006}.
If \( j_\varepsilon \) is a curve in \( \SectionSpaceAbb{I} \)
such that \( \difFracAt{}{\varepsilon}{0} 
j_\varepsilon 
= \difLie_{X_u}j\), where \(X_u\contr\omega = - \dif u\) 
with \( u \in \sFunctionSpace(M, \R) \), then 
the Calabi operator defined in~\eqref{eq:cartan:calabiOperator} 
takes here the form
\begin{equation}
	\difFracAt{}{\varepsilon}{0} S(j_\varepsilon) 
= - \nabla^{\prime\prime\ast} 
\nabla^{\prime\prime\ast} \nabla^{\prime\prime} 
\nabla^{\prime\prime}u.
\end{equation}
The right-hand side is the operator used by Calabi in his 
study of extremal K\"ahler metrics \parencite{Calabi1985}.
However, in Calabi's paper, the variation of the scalar 
curvature was considered with the complex structure 
\(j\) fixed and the K\"ahler form \( \omega \) varying 
in the fixed cohomology class.
\end{remark}

\subsection{Perturbed scalar curvature}
In this subsection, we perturb the scalar curvature of 
compact K\"ahler manifolds by incorporating it with higher 
Chern forms, based on \parencite{Futaki2006}.
We show that the  perturbed scalar curvature becomes a 
momentum map, with respect to a perturbed symplectic structure, 
on the space $\mathcal I$ of all complex structures on a
fixed symplectic manifold \((M, \omega)\),
\(\dim M = 2n\). This extends results of 
Fujiki and Donaldson on the unperturbed case.

We modify the K\"ahler structure on $\mathcal I$ as follows. 
Denote by $c_n$ the $GL(n,\mathbb C)$-invariant polynomial 
corresponding to the $n$-th Chern class in the Chern-Weil theory,
namely, 
\begin{eqnarray*}
&&\det(t_1A_1 + \cdots + t_n A_n) = \\
&&\cdots + n!\ t_1 \cdots t_n c_n(A_1,\ldots,A_n) + \cdot \, ,
\end{eqnarray*}
where
\begin{equation}
c_n(A,\ \ldots\ ,A) = \det A. 
\end{equation}
Note also that the coefficient of $t^k$ in $\det(I + tA)$ 
corresponds to the $k$-th Chern class; in particular, 
the coefficient of $t$ is the trace.
Fix a small $t \in \bfR$. For $\mu, \nu \in 
\Gamma (\mathrm{Sym}(T'M \otimes T'M))\ \cong T_j \mathcal I$, 
we define
\begin{equation*}
(\mu, \nu)_t = 
n\int_M c_n \Bigl(\ \mu^i{}_{\barl}\ \overline{\nu}_{jk}\ 
\frac{\sqrt{-1}}{2\pi}dz^k\wedge dz^{\barl}, \omega\otimes I + 
t\frac{\sqrt{-1}}{2\pi}\Theta, \ldots, \omega\otimes I 
+ t\frac{\sqrt{-1}}{2\pi}\Theta\Bigr) 
\end{equation*}
where 
$\Theta = \barpartial(g^{-1}\partial g)$\ \ is the curvature 
matrix of the Levi-Civita connection of $(M,\omega,j)$.
Here, the objects that are plugged into \( c_n \) are 
\( 2 \)-forms with values in the endomorphism bundle and 
then \( c_n \) is applied to these endomorphisms.

By the remark above, when $t = 0$,
\begin{equation}
(\mu, \nu)_0 = 
L^2 \ \mathrm{inner\ product\ of\ } \mu\ \mathrm{and}\ \nu; 
\end{equation}
this is used in the previous subsection to define the 
symplectic structure on $\mathcal I$. Thus, for small $t$, 
the imaginary part of $(\mu, \nu)_t$ defines a perturbed 
symplectic structure on $\mathcal I$.
\begin{definition}
The $t$-perturbed scalar curvature (or simply perturbed 
scalar curvature) $S(j,t)$ is defined by
\begin{eqnarray*}
S(j,t)\ \omega^n &= & 
c_1(j) \wedge \omega^{n-1} + tc_2(j)\wedge \omega^{n-2}
+ \cdots + t^{n-1} c_n(j) \\
&=& \frac1t \Bigl(\det \bigl(\omega \otimes I + 
t\frac{\sqrt{-1}}{2\pi}\Theta\bigr) - \omega^n\Bigr),
\end{eqnarray*}
where $c_i(j)$ is the $i$-th Chern form of $(M, \omega, j)$. 
\end{definition}
It is shown in \parencite{Futaki2006} that the action of 
the Hamiltonian symplectomorphism group $\HamDiffGroup(M,\omega)$
on the K\"ahler manifold $(\mathcal I, (\cdot,\cdot)_t)$
admits the equivariant momentum map
\begin{equation}
J_t : j \ni \mathcal I \mapsto -S(j,t) \in  C^{\infty}(M)/\bfR
\end{equation}  
with the sign convention of~\eqref{momentum1}. 
Obviously,
\begin{equation}
J_t^{-1}(0) = \set*{j \in \mathcal I\given S(j,t)\ \mathrm{is\ constant}}.
\end{equation}
In other words, the zeros of the momentum map are those
$j$'s for which
\begin{equation}
c_1(j)\wedge \omega^{n-1} + t c_2(j) \wedge \omega^{n-2} 
+ \cdots + t^{n-1} c_n(j) \end{equation}
is a harmonic form, \ie, a multiple  of $\omega^n$. 
Using \cref{prop:cartan:constFutaki} as well as 
the arguments in \parencite{Bando2006},
we obtain the Futaki invariant in this perturbed case
\begin{equation}
\widetilde{\mathcal F}_\zeta (\omega, 1) = 
- \int_M (S(j,t) - {S(j,t)}_0)\,\varrho_\zeta\ \mu_\omega. 
\label{obst2}
\end{equation}
This recovers the result of \parencite{Futaki2006}.
The coefficient of $t^k$ in 
\( \widetilde{\mathcal F}_\zeta \) of~\eqref{obst2} 
can be re-written as
\begin{equation}
f_k(\zeta) = \int_M \difLie_\zeta F_k \wedge \omega^{n-k+1}
\end{equation}
where, writing $Hc_k(\omega)$ for the harmonic part 
of $c_k(\omega)$, we have
\begin{equation}
c_k(\omega) - Hc_k(\omega) = 
\sqrt{-1} \partial \barpartial F_k 
\end{equation}
with $ F_k \in \Omega^{k-1,k-1}(M)$, see 
\parencite{Futaki2005,Futaki2006} and 
\parencite{Bando2006} for details.
This $f_k$ is an obstruction 
to the existence of $\omega \in \mathcal{K}$
for which the $k$-th Chern form $c_k(\omega)$ is
harmonic. Note that $f_1$ coincides with the cscK 
obstruction in the previous subsection. This result 
is due to \textcite{Bando2006}.  
In \parencite{Futaki2008}, the perturbed version of
extremal K\"ahler metrics and the
Calabi-Lichnerowicz-Matsushima type decomposition theorem
has been studied. The Kempf--Ness functional can be defined 
as in~\eqref{Kempf1}, but not much has been
done so far in this perturbed case.


\subsection{Deformation quantization}

A deformation quantization is a formal associative 
deformation of a Poisson algebra 
$(\sFunctionSpace(M), \poissonDot)$
into the space $\sFunctionSpace(M)[[\nu]]$ of 
formal power series in $\nu$ with a composition 
law $\ast$, called the {\it star product},
with the following property. The constant function 
$1$ is a unit and if 
we write for $f,\ g  \in \sFunctionSpace(M)$ 
\begin{eqnarray*}
f \ast g = \sum_{r=0}^\infty C_r(f,g) \nu^r,
\end{eqnarray*}
then $\ast$ is required to satisfy
\begin{eqnarray*}
C_0(f,g) = f g, \qquad C_1(f,g) - C_1(g,f)  = \{f,g\},
\end{eqnarray*}
where the $C_r$'s are required to be bidifferential operators.

For symplectic manifolds, the existence of star products 
was shown by \textcite{DeWildeLecomte1983}, \textcite{Fedosov1994}, 
and \textcite{OmoriMaedaYoshioka1991}. For general Poisson 
manifolds, the existence of star products was shown by 
\textcite{Kontsevitch2003}. In this subsection, we are concerned 
with the star product constructed by Fedosov.

A star product on a compact symplectic manifold 
$(M,\omega)$ of dimension $2n$ 
is called \emph{closed} (in the sense of 
\textcite{ConnesFlatoSternheimer1992}) if 
\begin{eqnarray*}
\int_M F\ast H\ \mu_\omega = \int_M H \ast F\ \mu_\omega
\end{eqnarray*}
for all $F,\ H \in \sFunctionSpace(M)[[\nu]]$.






A \emph{symplectic connection} $\nabla$ on a compact 
symplectic manifold $(M,\omega)$ is a torsion free 
affine connection such that $\nabla\omega = 0$.
There always exists a symplectic connection on 
any symplectic manifold. Unlike the Levi-Civita 
connection on a Riemannian manifold, a symplectic 
connection is not unique on a symplectic manifold. 
Given two symplectic connections $\nabla$ and $\nabla^\prime$, let
\begin{equation}
S(X,Y) \defeq \nabla_X Y - \nabla^\prime_X Y.
\end{equation}
Then $\omega(S(X,Y),Z)$ is totally symmetric in $X$, $Y$, $Z$. 
Conversely, if $\nabla$ is a symplectic connection
and $\omega(S(X,Y),Z)$ is totally symmetric, then 
$\nabla^\prime \defeq \nabla + S$ is a 
symplectic connection.
Thus, on a symplectic manifold $(M,\omega)$, the 
space of symplectic connections, denoted by 
$\mathcal E (M,\omega)$, is an affine space modeled 
on the set of all smooth sections $\Gamma(S^3(T^\ast M))$ 
of symmetric covariant 3-tensors. Hence, we may identify 
$\mathcal E (M,\omega)$ with 
\begin{equation}
\mathcal E (M,\omega) \cong \nabla + \Gamma(S^3(T^\ast M)).
\end{equation}
On $\mathcal E (M,\omega)$ there is a natural symplectic 
structure $\Omega^{\mathcal E}$
whose value at $\nabla$ is given by
\begin{equation}\label{symp str}
 \Omega^{\mathcal E}_\nabla(\underline{A},\underline{B}) = 
\int_M \omega^{i_1j_1}\omega^{i_2j_2}\omega^{i_3j_3}
\underline{A}_{i_1i_2i_3}\,\underline{B}_{j_1j_2j_3}\ 
\mu_\omega \nonumber
\end{equation}
for $\underline{A},\ \underline{B} \in 
T_\nabla \mathcal E(M,\omega) \cong \Gamma(S^3(T^\ast M))$, 
see \parencite{CahenGutt2005}.
Since
$\Omega^{\mathcal E}_\nabla$ is independent of $\nabla$ 
we may omit $\nabla$ and write $\Omega^{\mathcal E}$. 
There is a natural action of the group 
$\DiffGroup(M,\omega)$ of symplectomorphisms 
on $\mathcal E(M,\omega)$, which is given for a 
symplectomorphism $\varphi$ by
\begin{equation}
(\varphi(\nabla))_X Y = 
\varphi_\ast \bigl(\nabla_{\varphi_\ast^{-1} X} 
(\varphi_\ast^{-1} Y)\bigr)
\end{equation}
for any $\nabla \in \mathcal E(M,\omega)$ and any 
smooth vector fields  $X$ and $Y$ on $M$.
This action preserves the symplectic structure 
$\Omega^{\mathcal E}$ on $\mathcal E(M,\omega)$. 
In particular, the group $\HamDiffGroup(M,\omega)$ 
of Hamiltonian diffeomorphisms acts on
$\mathcal E(M,\omega)$ as symplectomorphisms. 

Let $X_f$ be the Hamiltonian vector field on 
$M$ defined by a smooth function $f$ on $M$. 
Then the induced infinitesimal action of 
$- X_f$ on $\mathcal E(M,\omega)$ is computed as
\begin{equation}\label{tangent}
\begin{split}
(\difLie_{X_f}\nabla)_Y Z
&= [X_f,\nabla_Y Z] - \nabla_{[X_f, Y]} Z - \nabla_Y [X_f,Z] \\
&= R^\nabla(X_f,Y)Z + (\nabla\nabla X_f )(Y,Z).
\end{split} 
\end{equation}

For $\nabla \in \mathcal E(M,\omega)$, we define 
the \emph{Cahen--Gutt momentum map} $J(\nabla)$ by
\begin{equation}  \label{CGmoment}
J(\nabla) = \nabla_p\nabla_q \mathrm{Ric}(\nabla)^{pq} 
 - 
\frac12 \mathrm{Ric}(\nabla)_{pq}\mathrm{Ric}(\nabla)^{pq}
+ \frac14 \mathrm{R}(\nabla,\omega)_{pqrs}
\mathrm{R}(\nabla,\omega)^{pqrs},
\end{equation}
where 
\begin{equation}\label{curvature}
\mathrm{R}(\nabla,\omega)(X,Y,Z,W) = \omega (R(X,Y)Z, W)
\end{equation}
and 
\begin{equation}
\mathrm{Ric}(X,Y) = - \tr ( Z \mapsto R(X,Z)Y).
\end{equation}

\begin{theorem}[\textcite{CahenGutt2005,DiezRatiuSymplecticConnections}] 
The function $J$ on $\mathcal E(M,\omega)$ given by~\eqref{CGmoment} is an equivariant momentum map
for the action of $\HamDiffGroup(M,\omega)$.
Moreover, the action of the group of symplectomorphisms 
on $\mathcal E(M,\omega)$ has a non-equivariant momentum map.
\end{theorem}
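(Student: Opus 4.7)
The plan is to verify the defining relation~\eqref{momentum1} of the momentum map directly: for every $f \in \sFunctionSpace(M)$, generating the Hamiltonian vector field $X_f$, and every tangent vector $A \in \Gamma(S^3(\CotBundle M)) \cong \TBundle_\nabla \mathcal E(M,\omega)$, one must establish
\begin{equation*}
\Omega^{\mathcal E}_\nabla\bigl(\difLie_{X_f}\nabla,\, A\bigr)
+ \int_M f \cdot \bigl(\tangent_\nabla J(A)\bigr)\, \mu_\omega = 0,
\end{equation*}
where the natural $L^2$-pairing realises the duality on $\sFunctionSpace(M)/\R$.

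The first step is to compute the variation $\tangent_\nabla J(A)$ of the Cahen--Gutt function~\eqref{CGmoment}. Writing $\nabla_t = \nabla + tA$, the linearisation of the curvature is given by the standard formula
$\tfrac{d}{dt}\big|_{t=0} R^{\nabla_t}(X,Y)Z = (\nabla_X A)(Y,Z) - (\nabla_Y A)(X,Z)$,
from which one reads off the variations of $\mathrm{Ric}(\nabla)$, of the double covariant divergence $\nabla_p\nabla_q \mathrm{Ric}^{pq}$, and of the quadratic invariants $|\mathrm{Ric}|^2$ and $|R|^2$. Pairing against $f$ and integrating by parts on the compact manifold $(M,\mu_\omega)$, every covariant derivative of $A$ can be transferred onto $f$, so that $\int_M f \cdot \tangent_\nabla J(A)\,\mu_\omega$ becomes an expression in covariant derivatives of $f$ contracted with the curvature tensor and with $A$.

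The second step is to expand $\Omega^{\mathcal E}_\nabla(\difLie_{X_f}\nabla, A)$ using~\eqref{tangent} and~\eqref{symp str}. Here $\difLie_{X_f}\nabla$ splits into the curvature contribution $R^\nabla(X_f,Y)Z$ and the Hessian contribution $(\nabla\nabla X_f)(Y,Z)$, both of which are then contracted with $A_{i_1i_2i_3}\omega^{i_1j_1}\omega^{i_2j_2}\omega^{i_3j_3}$. The essential algebraic input is that this three-fold contraction is totally symmetric in the remaining indices of the partner factor, so that only the fully symmetric parts of the tensors paired against $A$ survive. Matching the two sides term by term --- using the first and second Bianchi identities of $R^\nabla$ to rearrange the curvature contractions and to absorb divergence terms --- yields the required momentum map identity.

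Equivariance under $\HamDiffGroup(M,\omega)$ then follows by naturality: the scalar $J(\nabla)$ is built solely from $\nabla$, $\omega$, and their contractions, so $J(\varphi(\nabla)) = \varphi_* J(\nabla)$ for every $\varphi \in \DiffGroup(M,\omega)$; restricted to Hamiltonian $\varphi$, this is exactly the coadjoint action on $\sFunctionSpace(M)/\R$. For the full symplectomorphism group the pullback action on $\sFunctionSpace(M)/\R$ differs from the coadjoint action by a piece detecting $H^1(M,\R)$, so one obtains only a non-equivariant momentum map with an explicit $1$-cocycle, in line with \parencite[Proposition~2.1]{DiezRatiuSymplecticConnections}. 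The main obstacle is the first step: the three summands of~\eqref{CGmoment} collectively carry up to four covariant derivatives, so the integrations by parts produce many contributions whose cancellation relies crucially on the second Bianchi identity and on the total symmetry of $A$. Organising this bookkeeping so that exactly the two terms $R^\nabla(X_f,\cdot)\cdot$ and $(\nabla\nabla X_f)(\cdot,\cdot)$ of~\eqref{tangent} emerge is where the bulk of the analytic work is concentrated; the equivariance analysis is then essentially formal.
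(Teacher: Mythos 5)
The paper itself does not prove this theorem: it is quoted from \parencite{CahenGutt2005} and \parencite{DiezRatiuSymplecticConnections}, and your strategy --- linearize the curvature along \( \nabla + tA \), read off the variation of the Cahen--Gutt density~\eqref{CGmoment}, integrate by parts against the Hamiltonian \( f \), and match the result with \( \Omega^{\mathcal E}_\nabla(\difLie_{X_f}\nabla, A) \) using~\eqref{tangent}, total symmetry of \( A \), and the Bianchi identities --- is exactly the direct verification carried out in those references, so in spirit you are reproducing the known proof rather than diverging from the paper. Two caveats. First, the entire mathematical content of the first statement is the term-by-term cancellation you defer: the three summands of~\eqref{CGmoment} produce, after four integrations by parts, a large number of curvature contractions, and asserting that they reassemble into precisely \( R^\nabla(X_f,\cdot)\cdot + (\nabla\nabla X_f)(\cdot,\cdot) \) is asserting the theorem; as a sketch this is acceptable, but note also that the sign conventions matter here (by~\eqref{tangent} the infinitesimal generator associated with \( X_f \) is \( -\difLie_{X_f}\nabla \), and the paper's convention~\eqref{momentum1} together with its Hamiltonian sign convention fixes the sign of \( \kappa \); your displayed identity has these signs unpinned). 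Second, your justification of the last sentence is slightly off target: the issue is not that the pullback action on \( \sFunctionSpace(M)/\R \) differs from the coadjoint action, but that a momentum map for the full symplectomorphism group must pair with \emph{all} symplectic vector fields, and its existence is not automatic from the Hamiltonian case. It follows here from the general existence result \parencite[Proposition~2.1]{DiezRatiuSymplecticConnections}, which applies because \( \mathcal E(M,\omega) \) is an affine (hence contractible) space with the constant symplectic form \( \Omega^{\mathcal E} \); equivariance then genuinely fails, with a nontrivial \( 1 \)-cocycle supported on \( \VectorFieldSpace(M,\omega)/\HamVectorFields(M,\omega) \cong H^1(M,\R) \), which is the correct role of \( H^1 \) in this statement. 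With those two points made precise, your outline matches the cited proofs.
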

\noindent
Thus, the zeros of the momentum map are the symplectic 
connections $\nabla$ for which $J(\nabla)$ is constant.
\begin{remark}\label{Fact1} An important observation 
is that if the Fedosov star product is closed, then 
the Cahen--Gutt momentum $J(\nabla)$ is constant, see 
\parencite{Fuente-Gravy2015,Fuente-Gravy2016}.
\end{remark}

Now we assume that $M$ is a compact K\"ahler manifold 
and that $\omega$ is a fixed symplectic form. We set, 
as before, $\mathcal I$ to be the set of all integrable 
complex structures $j$ such that $(M,\omega, j)$ is a K\"ahler 
manifold. Let $lv : \mathcal I \to \mathcal E(M,\omega)$ 
be the \emph{Levi-Civita map} sending $j$ to the Levi-Civita 
connection $\nabla^j$ of the K\"ahler manifold $(M,\omega,j)$. 
Then $lv^\ast \Omega^{\mathcal E}$ gives a new symplectic 
structure on $\mathcal I$ if it is non-degenerate.
A condition for non-degeneracy, under suitable assumptions 
on the Ricci curvature, is given in 
\parencite[Proposition~17]{Fuente-Gravy2016}. In the following, 
we assume that $lv^\ast \Omega^{\mathcal E}$ is symplectic.
Then, by \cref{Rem0}, we can apply
\cref{prop:cartan:constFutaki} and recover the following result.
\begin{theorem}[{\parencites[Theorem~1]{Fuente-Gravy2016}{FutakiOno2018}}]
Let $(M, \omega)$ be a compact K\"ahler manifold. Then 
\begin{equation}
 \widetilde{\mathcal F}_\zeta (\omega, 1) = - \int_M (J(\nabla^j) - J(\nabla^j)_0)\,\varrho_\zeta\ \mu_\omega, \qquad j=\chi(\omega,1) \label{obst3}
\end{equation}
is independent of the choice of $\omega \in \mathcal K$. In particular, if $ \widetilde{\mathcal F}_\zeta (\omega, 1) $
is not identically zero as a function of $\zeta$, then there is no closed Fedosov star-product for
$\omega \in \mathcal K$.
\end{theorem}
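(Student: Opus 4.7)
The plan is to apply \cref{prop:cartan:constFutaki} with the Cartan geometry \((\SectionSpaceAbb{P}, \theta)\) over \(\mathcal K\) from \cref{lem1}, taking the ambient symplectic manifold to be \(\mathcal I\) equipped with the (assumed non-degenerate) symplectic structure \(lv^*\Omega^{\mathcal E}\), and the map \(\chi\colon\SectionSpaceAbb{P}\to\mathcal I\) given by \(\chi(\phi)=\phi_* j\). As shown in the discussion following~\eqref{eq:kahler:complexOrbitModel}, this realizes \((\SectionSpaceAbb{P},\theta)\) as a model for the complex orbit through \(j\).

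The next step is to verify the hypotheses of \cref{prop:cartan:constFutaki}. The Cartan connection \(\theta\) is flat by \cref{torsion}, hence in particular torsion-free. For any \(\zeta\) in the stabilizer \(\LieA a_j\), which coincides with the Lie algebra of \(j\)-holomorphic vector fields (see the discussion before \cref{torsion}), the second assertion of \cref{torsion} gives \(\difLie_{\zeta^*}\theta=0\), which is trivially \(\LieA g\)-valued. Finally, the Cahen--Gutt functional provides an equivariant momentum map for the \(\HamDiffGroup(M,\omega)\)-action on \((\mathcal I, lv^*\Omega^{\mathcal E})\), and by \cref{Rem0} this is enough to ensure \(\LieA a\)-equivariance with respect to the Klein pair \(\bigl(\VectorFieldSpace(M,\omega)\oplus\I\,\HamVectorFields(M,\omega),\,\VectorFieldSpace(M,\omega)\bigr)\) via \cref{prop:cartan:momentumMapEquivarianceComplexCase}.

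With all hypotheses in place, \cref{prop:cartan:constFutaki} yields that \(F_\zeta\) is locally constant on \(\mathcal K\), and hence constant, since \(\mathcal K\) is path-connected (K\"ahler potentials form an open convex subset of \(\sFunctionSpace(M)/\R\)). Unwinding the definition~\eqref{eq:futaki:generalizedFutakiInvariant} at \(\phi=\id_M\) by means of \cref{Rem1}, using that \(\kappa_{\LieA a}\) is minus the \(L^2\)-pairing and that the momentum map at hand is (up to sign) \(-J(\nabla^j)\), recovers formula~\eqref{obst3} for \(\widetilde{\mathcal F}_\zeta(\omega,1)\); the replacement of \(J(\nabla^j)\) by \(J(\nabla^j)-J(\nabla^j)_0\) is automatic because \(\varrho_\zeta\) is normalized to have zero mean.

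The non-existence consequence is then immediate: by \cref{Fact1}, if the Fedosov star product attached to some \(\omega\in\mathcal K\) is closed, then \(J(\nabla^j)\) is constant, so \(J(\nabla^j)-J(\nabla^j)_0=0\) and the integrand in~\eqref{obst3} vanishes pointwise, forcing \(\widetilde{\mathcal F}_\zeta(\omega,1)=0\) for every \(\zeta\). Contrapositively, nonvanishing of \(\widetilde{\mathcal F}_\zeta\) as a function of \(\zeta\) obstructs the closedness of any Fedosov star product on \(\mathcal K\). The main technical point is the bookkeeping of signs and the verification that \(\HamDiffGroup\)-equivariance of the Cahen--Gutt momentum map suffices for \(\LieA a\)-equivariance in the sense of \cref{def:cartan:momentumMapEquivariance}; this however is exactly the content of \cref{Rem0} and requires no input specific to symplectic connections.
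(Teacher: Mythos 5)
Your proposal is correct and follows essentially the same route as the paper: the paper likewise invokes \cref{Rem0} (to get $\LieA a$-equivariance from the $\HamDiffGroup(M,\omega)$-equivariance of the Cahen--Gutt momentum map pulled back along the Levi-Civita map) and then applies \cref{prop:cartan:constFutaki} with the Cartan bundle $(\SectionSpaceAbb{P},\theta)$ of \cref{sec:kaehler}, whose flatness and invariance under holomorphic vector fields come from \cref{torsion}, with the closed-star-product consequence drawn from \cref{Fact1}. Your write-up merely makes explicit the hypothesis checks and the unwinding via \cref{Rem1} that the paper leaves implicit by analogy with the cscK case.
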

\noindent
The last statement follows from \cref{Fact1}.

In \parencite{FutakiOno2018} it has been shown that 
for a real smooth function $f$, 
$\difLie_{X_f}\nabla^j = 0$ if and only if 
$\difLie_{X_f}j = 0$. In this case, $X_f$ is a holomorphic 
Killing vector field. Note also that the momentum map $J(\nabla)$ 
defined in~\eqref{CGmoment} is written in the K\"ahler case as
\begin{equation*}
J(\nabla) = \Delta S - \frac12 |\mathrm{Ric}|^2 
+ \frac14 |\mathrm{R}|^2.
\end{equation*}

If a vector field $Y$ generates a Hamiltonian isometric 
$S^1$-action, \ie, \( Y \contr \omega=-\dif v_Y \)
for the Hamiltonian function $v_Y$, then
adapting the cohomology formula of \textcite{Odaka2013} 
and \textcite{Wang2012a} to our context, we obtain 
the version of the Donaldson-Futaki invariant 
$\mathrm{Fut}$ for a test configuration 
$(\mathcal M, \mathcal L)$
\begin{eqnarray}
&&\frac{1}{(2\pi)^n}\mathrm{Fut}\bigl(\mathrm{grad}^{(1,0)}v_Y\bigr)
\label{intersection1}\\
&&\quad = \frac{-2}{n+1}\kappa(M,L)\cup c_1(\mathcal{L})^{n+1}
+2n\left( c_2(\mathcal{M})- \frac{1}{2}c_1^2\Bigl(
\mathcal{K}^{-1}_{\mathcal{M}/\bfC\bfP^1}\Bigr)
\right)\cup c_1(\mathcal{L})^{n-1},\nonumber
\end{eqnarray}
where $\kappa(M,L)$ is the average of the Cahen--Gutt momentum
\begin{equation}\label{intersection2}
\kappa(M,L) \defeq 
n(n-1)\frac{\left(c_2-\frac{1}{2}c_1^2
\right)(M)\cup c_1(L)^{n-2}}{c_1(L)^n},
\end{equation}
and the cup products \( \cup \) are taken on the total 
space of $\mathcal M$, which has dimension $n+1$ in~\eqref{intersection1}, and on $M$, which has dimension 
$n$ in~\eqref{intersection2}, see \parencite{FutakiFuenteGravy2019}.
This suggests that one could define $K$-stability related 
to the study of a K\"ahler metric with constant Cahen--Gutt 
momentum, at least if one can restrict to smooth test 
configurations. The Kempf--Ness functional can be also 
defined as in~\eqref{Kempf1}, but nothing has been
done so far in the literature in this Cahen--Gutt 
momentum map case.

Replacing the role of the scalar curvature by $J(\nabla)$, 
many results in K\"ahler geometry can be
carried over to the geometry of the Cahen--Gutt momentum 
map, see, e.g., \parencite{Fuente-Gravy2016,FutakiOno2018}. 
For example, one can define a Cahen--Gutt version of extremal 
K\"ahler metrics and prove the same structure theorem as 
the Calabi extremal K\"ahler metrics, see 
\parencite[Theorem~4.7]{FutakiOno2018} and 
\parencite[Theorem~6.9]{DiezRatiuSymplecticConnections}.

\subsection{\texorpdfstring{$Z$}{Z}-critical K\"ahler metrics}
In this subsection, we connect Dervan's 
work~\parencite{Dervan2023} to our general results of \cref{sec:general}.
Let $(M,L)$ be a smooth polarized variety of dimension $n$, 
with $L$ an ample line bundle. Let $\mathcal K$ be the set 
of K\"ahler forms in the class $c_1(L)$.
Suppose we are given
$\rho = (\rho_0, \dotsc, \rho_n) \in \mathbb C^{n+1}$
with $\rho_n = \sqrt{-1}$, and 
$\Theta \in \oplus_{j=0}^n H^{j,j}(M,\mathbb C)$
of the form $\Theta = 1 + \Theta'$ where 
$\Theta' \in \oplus_{j>0} H^{j,j}(M,\mathbb C)$.
We define the 
\emph{central charge} $Z(M,L) : \mathbb N \to \mathbb C$ by 
\begin{equation}
Z_k(M,L) = 
\sum_{\ell=1}^n \rho_\ell k^\ell 
\int_M L^\ell \cup \sum_{j=0}^n a_j (-K_M)^j \cup \Theta
\end{equation}
where \( a_j \in \C \) with $a_0=a_1=1$.
Here, \( K_M \) is the canonical bundle of \( M \) 
and we use the convention $K_M^0 = 1$. 
We define the \emph{phase} $\varphi_k (M,L)$ to be 
the argument of $Z_k(M,L)$. Hereafter we often omit $k$ 
and write $Z(M,L)$ and $\varphi(M,L)$, or simply $Z$ and $\varphi$.

We fix a representative $\theta \in \Theta$. 
For $\omega \in \mathcal K$, we define
\begin{equation}
\tilde Z(\omega) = 
\sum_{\ell=1}^n \rho_\ell k^\ell 
\sum_{j=0}^n a_j\tilde Z^{\ell,j} \in 
\sFunctionSpace(M,\mathbb C)\end{equation}
where
\begin{equation}
\tilde Z^{\ell,j} = 
\frac{\omega^\ell \wedge \mathrm{Ric}\, \omega^j 
\wedge \theta}{\omega^n}
- \frac{j}{\ell +1} \Delta 
\left( \frac{\omega^{\ell + 1} \wedge \mathrm{Ric}\, 
\omega^{j-1} \wedge \theta}{\omega^n}  \right).
\end{equation}
Here $\mathrm{Ric}\, \omega$ denotes the Ricci form 
of $\omega$ and $\Delta$ the Laplacian.
Note that
\begin{equation}
\int_M \tilde Z(\omega) \omega^n = Z(M,L).
\end{equation}
We say that $\omega \in \mathcal K$ is a $Z$-critical K\"ahler metric if
\begin{equation}
\Im\bigl(e^{-\I\varphi(M,L)}\tilde Z(\omega)\bigr) = 0
\quad \text{and} \quad 
\mathrm{Re}\bigl(e^{-\I\varphi(M,L)}\tilde Z(\omega)\bigr) > 0.
\end{equation}
Notice that the first condition is equivalent to  
$\arg \tilde Z(\omega) = \arg Z(M,L) \mod \pi$. 

Fix $\omega \in \mathcal K$ and consider it as 
a symplectic form. As before, let  $\mathcal I$ be
the set of all $\omega$-compatible complex structures 
$j$ such that $(M,\omega, j)$ is a K\"ahler
manifold. Dervan constructs a K\"ahler structure 
$\Omega_\epsilon$, $\epsilon = \frac 1 k$,  on 
$\mathcal I$  when $k$ is large, \ie, for the 
large volume limit. Unlike the $L^2$-inner product 
in the previous subsections, the construction of 
$\Omega_\epsilon$ is more nonlinear and we will 
not reproduce it here, see 
\parencite[Equation~(3.11)]{Dervan2023}. Dervan 
shows that $\Im (e^{-i\varphi_\epsilon}\tilde Z)$ 
is a momentum map for the action of the group of 
Hamiltonian diffeomorphisms. The Futaki invariant 
of \cref{prop:cartan:constFutaki} (and \cref{Rem1}) 
in this case is given by
\begin{equation}
\widetilde{\mathcal F}_\zeta (\omega, 1) =
- \int_M (\Im (e^{-\I\varphi_\epsilon}\tilde Z)
- \Im (e^{-\I\varphi_\epsilon}\tilde Z)_0)\,
\varrho_\zeta\ \mu_\omega. 
\label{obst4}
\end{equation}
The vanishing of $\widetilde{\mathcal F}_\zeta (\omega, 1)$ 
is a necessary condition for the existence
of $Z$-critical K\"ahler metrics; in this way, we 
recover \parencite[Corollary~3.7]{Dervan2023}. 
A special form of $\widetilde{\mathcal{F}}_\zeta (\omega, 1)$ 
has been considered by \textcite{Leung1998a}.
The Kempf--Ness functional can be defined again as 
in~\eqref{Kempf1}. 

Instead of treating only $K_M$, the higher Chern classes
can also be treated, see \parencite[Section 4]{Dervan2023} 
and \parencite{DervanHallam2023}. 


\section{Gauge theory}
\label{sec:gauge}

Let $E \to M$ be a $\sFunctionSpace$ complex vector bundle
over a compact K\"ahler manifold $(M,\omega)$, 
$h_0$ a fixed Hermitian metric of $E$, and $\mathcal G$ 
the group of unitary gauge transformations of $E$ with 
respect to $h_0$. Let $\mathcal A(h_0)$ be the space of 
$h_0$-unitary connections $A$ of $E$ 
such that the curvature $F_A$ is type $(1,1)$.
Thus, $F_A^{0,2} = \barpartial_A\circ \barpartial_A = 0$ 
and $A$ defines a holomorphic structure on $E$, 
see \parencite[Theorem~2.1.53]{DonaldsonKronheimer1997}. 
Here we denote by $\dif_A$ the covariant derivative with 
respect to $A \in \mathcal A(h_0)$;
$\barpartial_A$ is the type $(0,1)$-part of $\dif_A$.
Then $\mathcal G$ acts on $\mathcal A(h_0)$ by 
$\dif_{g(A)} = g\circ \dif_A \circ g^{-1}$ so that 
$g(A) = A - \dif_A g \cdot g^{-1}$ for \( g \in \mathcal{G} \). 
Here we used the same letter $A$ to denote the connection 
form with respect to a chosen unitary frame. 
We also denote by $\mathcal G^c$ the space of complex gauge 
transformations of $E$; thus $\mathcal G^c$ is the 
complexification of $\mathcal G$.

We take the Klein pair $(\mathfrak{g}^c, \mathfrak{g})$, where 
$\mathfrak{g}$ is the Lie algebra of $\mathcal{G}$ and 
$\mathfrak{g}^c$ is the complexification of $\mathfrak{g}$. 
In this case, $\mathfrak{g}^c$ integrates to
the complex gauge group $\mathcal{G}^c$ of $E$. 
We take as the Cartan bundle $\SectionSpaceAbb{P}$ modeled 
on the Klein pair $(\mathfrak{g}^c, \mathfrak{g})$ with 
group $\mathcal{G}$ the principal bundle $\SectionSpaceAbb{P}
=\mathcal{G}^c \to \mathcal{G}\backslash \mathcal{G}^c$ 
with the left $\mathcal{G}$-action. With the right Maurer--Cartan
form $\theta$ on \( \SectionSpaceAbb{G}^c \), the pair 
$(\SectionSpaceAbb{P},\theta)$ is a Cartan geometry modeled 
on the Klein pair $(\mathfrak{g}^c, \mathfrak{g})$.
Alternatively, one can consider the space of pairs $(h, g)$,
where $h$ is a Hermitian metric on $E$ and $g \in \mathcal G^c$,
satisfying $h(x,y) = h_0(gx,gy)$. The left action of $\mathcal{G}$ 
on $\SectionSpaceAbb{P}$ is given by  $k\cdot (h,g) = (h,kg)$. 
Then $\SectionSpaceAbb{P}$ is a principal $\mathcal{G}$-bundle 
over the homogenous space $\mathcal{G}\backslash \mathcal{G}^c$
and $\SectionSpaceAbb{P}$ is diffeomorphic to $\mathcal{G}^c$.
Note the formal similarity of the latter definition of 
$\SectionSpaceAbb{P}$ to the Cartan bundle in K\"ahler 
geometry introduced in \cref{sec:kaehler}.
Since any two Hermitian metrics are related by a complex 
gauge transformation, the base 
\( \mathcal G\backslash \mathcal G^c \) is identified 
with the space of Hermitian metrics on \( E \).
In particular, \( \mathcal G\backslash \mathcal G^c \) 
is simply connected.

Fix $A \in \mathcal A(h_0)$. There is a smooth map 
$\chi : \SectionSpaceAbb{P} \to \mathcal A(h_0)$ induced
by the action of $\mathcal G^c$ on $\mathcal A(h_0)$.
To describe this action, let $A = A^{1,0} + A^{0,1}$ be 
the decomposition of $A \in \mathcal A(h_0)$ into its
$(1,0)$ and $(0,1)$-components so that $A^{1,0} = 
- (A^{0,1})^\ast$, where ${}^\ast$ denotes the transpose-conjugate 
with respect to $h_0$. For $g \in \mathcal G^c$, define the 
partial connection by $\barpartial_{g(A)}\defeq
g\circ \barpartial_A \circ g^{-1}$ or, equivalently, by
\begin{equation}
g(A)^{0,1} = A^{0,1} - \barpartial_A^\ast g \cdot g^{-1}.
\end{equation}
In these formulas, $\circ$ is the composition of operators 
and $\cdot$ denotes matrix multiplication. 
A partial connection defines a unique unitary connection 
$g(A)$ by
\begin{equation}
g(A)^{1,0} \defeq -(g(A)^{0,1})^\ast 
= A^{1,0} + (\barpartial_A^\ast g \cdot g^{-1})^\ast,
\end{equation}
see \parencite[Lemma~2.1.54]{DonaldsonKronheimer1997}. 
If $g$ is a unitary gauge transformation in $\mathcal G$, 
then $\dif_{g(A)} = g\circ \dif_A \circ g^{-1}$ or, equivalently, 
\begin{equation}
g(A) = A + g\dif_A(g^{-1}) = A - \dif_A g\cdot g^{-1}.
\end{equation}
Thus, the $\mathcal{G}^c$-action on $\mathcal A(h_0)$ 
is a natural extension of the $\mathcal{G}$-action.

Since a unitary connection is uniquely determined by a 
partial connection, the tangent space $T_A\mathcal A(h_0)$ 
can be identified with $\Omega^{0,1}(\End(E))$ and there 
is an almost complex structure on $\mathcal A(h_0)$ defined 
by the multiplication by $\I$ on $\Omega^{0,1}(\End(E))$.
It is easy to see that the Cartan bundle 
\( (\mathcal{P}, \theta) \) provides a model for the 
complex orbit through \( A \) in the sense of 
\cref{def:cartan:orbitModel}. Note that $\theta$ is torsion free.

\subsection{Hermitian Yang--Mills connections}
Keeping the same notations, let $h_0$ be a Hermitian metric 
of a holomorphic vector bundle $E \to M$ over a compact 
\(2n\)-dimensional K\"ahler manifold $(M,\omega)$ and 
$\mathcal A(h_0)$ the space of $h_0$-unitary connections. 
There is a symplectic form $\Omega$ on $\mathcal A(h_0)$
defined by
\begin{equation}
\Omega(\xi,\eta) = - \int_M \tr(\xi\wedge\eta) \,\omega^{n-1}
\end{equation}
for $ \xi , \eta \in \TBundle_A \mathcal A(h_0)=\Omega^{0,1}(\End(E))$.
It is well known 
\parencite{AtiyahBott1983,Donaldson1985,DonaldsonKronheimer1997} 
and easy to show that
\begin{equation}
J(A) = F_A\wedge \omega^{n-1} -  \lambda \, \id\, \omega^n
\end{equation}
is a momentum map on the symplectic manifold 
$\bigl(\mathcal A(h_0), \Omega\bigr)$ for the action 
of the unitary gauge transformations $\mathcal{G}$ for 
any purely imaginary constant $\lambda$.
Thus, by \cref{prop:cartan:constFutaki}, the Futaki 
functional $\widetilde{\mathcal{F}}$ on the stabilizer 
\( (\mathfrak{g}^c)_A \) of $A= \chi(h_0,1)$ of 
the action of $\mathfrak{g}^c$, defined by
\begin{equation}\label{obst5}
\widetilde{\mathcal F}(\zeta) =  
- \Im \int_M \tr\bigl(\zeta (F_A \wedge \omega^{n-1} 
-  \lambda \, \id \, \omega^n)\bigr), \qquad 
\zeta \in (\mathfrak g^c)_A 
\end{equation}
is independent of the Hermitian metric $h_0$. Note that 
the unitary connection $A$ determines a holomorphic
structure $\barpartial_A$ on $E$ and thus the infinitesimal 
complex gauge transformation $\zeta$ in the
stabilizer of $A$ is just a holomorphic endomorphism of 
$E$ with respect to $\barpartial_A$.

Choosing $\lambda$ to be the topological invariant determined 
by 
\begin{equation}
-2\pi \I r\lambda = c_1(E)[\omega]^{n-1}
\end{equation} 
with $r=\mathrm{rank}(E)$, the Chern connection $A$ of 
a Hermitian metric $h$ is called a \emph{Hermitian 
Yang--Mills connection} (or $h$ is called a 
\emph{Hermitian-Einstein metric}) if its curvature 
$F_A$ satisfies
\begin{equation}
F_A\, \omega^{n-1} =  \lambda \, \id \, \omega^n.
\end{equation}
Thus $\widetilde{\mathcal F}$ is an obstruction to the 
existence of Hermitian Yang--Mills connections. 
However, it is well known that if a Hermitian Yang--Mills 
connection exists on an irreducible holomorphic
vector bundle $E$, then $E$ is simple, meaning that 
any holomorphic section $\zeta$ of the endomorphism 
bundle of $E$ is a multiple of the identity. Therefore, 
when trying to prove existence of Hermitian Yang--Mills 
solutions, it is necessary to assume that $E$ is simple 
(and even slope stable). With this natural assumption, 
the obstruction~\eqref{obst5} always vanishes.

Note, in passing, that we have the following criterion 
for the vanishing of the Futaki character which follows from
Kodaira--Serre duality.
\begin{proposition}
The Futaki character~\eqref{obst5} vanishes if and only 
if $F_A \omega^{n-1} - \lambda \omega^n$ is trivial in 
$H^{n,n}(M) = H^{2n}(M, \C)$.
\end{proposition}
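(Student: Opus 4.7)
My plan is to read the proposition as a direct consequence of Kodaira--Serre duality applied to the bundle $\mathrm{End}(E)$, combined with the observation that the stabilizer $(\mathfrak{g}^c)_A$ is a complex subspace, so that its invariance under multiplication by $\I$ upgrades vanishing of the imaginary part of a natural pairing to vanishing of the whole complex pairing.

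First I would note that $\mu := F_A \wedge \omega^{n-1} - \lambda\, \id\, \omega^n$ is an $\mathrm{End}(E)$-valued $(n,n)$-form and is therefore automatically $\bar\partial_A$-closed, so it defines a class $[\mu] \in H^{n,n}_{\bar\partial}(M, \mathrm{End}(E))$. On the other hand, as already used just before the statement, the elements of $(\mathfrak{g}^c)_A$ are exactly those sections $\zeta$ of $\mathrm{End}(E)$ satisfying $\bar\partial_A \zeta = 0$, so $(\mathfrak{g}^c)_A \cong H^0(M, \mathrm{End}(E))$ as complex vector spaces.

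Next I would invoke Kodaira--Serre duality: since $\mathrm{End}(E)$ is self-dual via the trace pairing, there is a non-degenerate pairing
\begin{equation*}
    H^{n,n}_{\bar\partial}(M, \mathrm{End}(E)) \times H^0(M, \mathrm{End}(E)) \longrightarrow H^{n,n}(M) = H^{2n}(M, \C), \qquad ([\mu], \zeta) \longmapsto \int_M \tr(\zeta \wedge \mu).
\end{equation*}
That this is well-defined on Dolbeault classes is a one-line integration by parts: if $\mu = \bar\partial_A \beta$, then $\bar\partial_A \zeta = 0$ gives $\tr(\zeta \wedge \mu) = \bar\partial\, \tr(\zeta \wedge \beta)$, whose integral vanishes by Stokes' theorem on the closed manifold $M$.

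Finally, I would match the Futaki character $\widetilde{\mathcal F}(\zeta) = -\Im \int_M \tr(\zeta\, \mu)$ with the Kodaira--Serre pairing. Because $(\mathfrak{g}^c)_A$ is closed under multiplication by $\I$, evaluating $\widetilde{\mathcal F}$ on $\I \zeta$ yields $-\Re \int_M \tr(\zeta \mu)$; hence $\widetilde{\mathcal F}$ vanishes identically on $(\mathfrak{g}^c)_A$ if and only if the full complex number $\int_M \tr(\zeta \mu)$ vanishes for every holomorphic $\zeta$, which by non-degeneracy of Kodaira--Serre is equivalent to the triviality of $[\mu]$ in $H^{n,n}_{\bar\partial}(M, \mathrm{End}(E))$. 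I do not anticipate a genuine obstacle here: the argument is essentially bookkeeping around sign conventions and the standard Dolbeault form of Kodaira--Serre duality, with the only subtle point being the verification that the trace pairing descends to Dolbeault classes, which is handled by the integration by parts above.
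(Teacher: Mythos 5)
Your proof is correct and follows essentially the paper's own route: the paper offers no argument beyond the remark that the criterion ``follows from Kodaira--Serre duality,'' and your Serre pairing of $H^0(M,\mathrm{End}(E))$ with $H^{n,n}_{\bar\partial}(M,\mathrm{End}(E))$, together with the observation that $(\mathfrak{g}^c)_A$ is closed under multiplication by $\sqrt{-1}$ so that vanishing of the imaginary part of the pairing forces vanishing of the full complex pairing, is exactly the intended fleshing-out of that remark. The only point worth noting is that your conclusion naturally lands in the $\mathrm{End}(E)$-valued Dolbeault group $H^{n,n}_{\bar\partial}(M,\mathrm{End}(E))$, which is the reading under which the equivalence holds for bundles with nontrivial holomorphic endomorphisms, rather than the scalar group $H^{n,n}(M)=H^{2n}(M,\mathbb{C})$ as literally written in the statement.
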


The Kempf--Ness functional defined in \cref{sec:general}
recovers in this case the \emph{Donaldson functional}, seen as a functional 
on the space of Hermitian metrics.
The Donaldson functional has been effectively used by
\textcite{Donaldson1985} for the existence problem of Hermitian 
Yang--Mills connections on algebraic surfaces 
and was the precursor of the Mabuchi K-energy.
See \parencite{UhlenbeckYau1986} for the higher dimensional result.

We remark that the above arguments still work even if 
the Hermitian metric is indefinite. 
\Textcite{GarciaFernandezMolina2023} used~\eqref{obst5} 
for indefinite metrics as an obstruction to the existence 
of solutions of the Hull--Strominger system. See also 
\parencite{GarciaFernandezRubioTipler2020,GarciaFernandezRubioTipler2024}.

\subsection{\texorpdfstring{$Z$}{Z}-critical connections}
In this subsection, we apply our results of \cref{sec:general} 
to the setting of \textcite{DervanMcCarthySektnan2020}.
Let $E \to M$ be a holomorphic vector bundle over a compact 
K\"ahler manifold $(M,\omega)$.
A polynomial central charge $Z_k(E)$ of $E$ is
\begin{equation}
Z_k(E) = \int_M \sum_{d=0}^n\, \rho_d \,k^d 
\,[\omega]^d \cup \chernClass(E)\cup U
\end{equation}
where $(\rho_0, \rho_1, \dotsc, \rho_n) \in 
(\mathbb C^\ast)^{n+1}$ with $\Im \rho_n > 0$, 
$\Re\rho_{n-1} <0$, and $U = 1 + N$ with 
$N \in H^{>0}(M,\mathbb R)$ are given data.
Here, \( \chernClass(E) \in H^*(M) \) denotes  the Chern 
character of \( E \). For each $E$ with $Z_k(E) \ne 0$, 
we define the phase
\begin{equation}
\varphi_k(E) \defeq \arg Z_k(E).
\end{equation}
We often omit $k$, and write \( Z(E) \) and 
\( \varphi(E) \) when no confusion is likely to occur.
Let $\tilde U \in \Omega^\ast(M, \mathbb R)$ be a 
closed form representing $U \in H^\ast(M,\mathbb R)$.
For a Hermitian metric $h$ of $E$ with Chern connection 
$A$, we denote by $\widetilde{\chernClass}(A) = 
\exp\left(\frac{i}{2\pi}\,F_A\right)$ the endomorphism-valued 
differential form whose trace represents the 
Chern character $\chernClass(E)$.
We define an endomorphism-valued
$(n,n)$-form $\tilde Z_k(A)$ by
\begin{equation}
\tilde Z_k(A) = \left[\sum_{d=0}^n \rho_d\,k^d\,
\omega^d\wedge\widetilde{\chernClass}(A)\wedge
\tilde{U}\right]_{n,n},
\end{equation}
where on the right-hand side above we only select the 
$(n,n)$-form component of the differential 
form in the brackets with various mixed degrees.  Obviously,
\begin{equation}\label{average1}
 0 = \Im\bigl(e^{-\I\varphi_k(E)}\tilde Z_k(E)\bigr) 
 = \int_M\,\tr \, \Im
 \bigl(e^{-\I\varphi_k(E)}\tilde Z_k(A)\bigr).
 \end{equation}
We say that $A$ is a \emphDef{$Z$-critical (or $Z_k$-critical) 
connection} if
\begin{equation}
\Im\bigl(e^{-\I\varphi_k(E)}\tilde Z_k(A)\bigr) = 0,
\end{equation}
where the imaginary part means $-\I$-times the $h$-skew 
Hermitian part. 

Fix a Hermitian metric $h_0$ and let $\mathcal A(h_0)$ be 
the space of all $h_0$-unitary connections.
Define a Hermitian pairing on $\TBundle_A \mathcal A(h_0) = 
\Omega^{0,1}(\End(E))$ by
\begin{equation}
\langle \xi,\eta \rangle_A = 
-\I \int_M \tr \, 
\left[ \Im( e^{-\I\varphi(E)}\,\tilde Z^\prime(A)
\wedge \xi \wedge \eta^\ast)\right]_{\mathrm{sym}}
\end{equation}
where $\tilde Z^\prime(A)$ denotes the derivative of 
$\tilde Z(A)$ with respect to $\frac{\I}{2\pi}\,F_A$,
considered as a variable, and where
$\mathrm{sym}$ denotes the graded symmetric product 
of endomorphism valued forms
\begin{equation}
[B_1 \wedge \cdots \wedge B_j]_{\mathrm{sym}}
= \frac 1{j!} \sum_{\sigma \in S_j} 
(-1)^{\mathrm{gradsgn}\,\sigma}
B_{\sigma(1)}\wedge \cdots \wedge B_{\sigma(j)};
\end{equation}
here, gradsgn means the sign resulting from the 
permutation of the differential forms $B_i$'s.
Then the imaginary part 
\begin{equation}
\Omega_A(\xi,\eta) = \Im \langle \xi,\eta \rangle_A
\end{equation}
is a symplectic form on $\mathcal A(h_0)$ provided 
$ \langle \cdot \, ,\,\cdot \rangle_A $ is positive
definite. When this latter condition is satisfied, 
$A$ is called a \emph{subsolution}. 
When $A$ is a subsolution, then the PDE for $A$ being 
a $Z$-critical connection becomes elliptic,
see \parencite[Lemma 2.36 and Proposition 2.43]{DervanMcCarthySektnan2020}.
By \parencite[Theorem~2.45]{DervanMcCarthySektnan2020}, the map
\begin{equation}
A \mapsto 2\pi \I \Im \bigl(e^{-\I\varphi(E)}\tilde Z(A)\bigr)
\end{equation}
is a momentum map for the $\mathcal G$-action on 
$\bigl(\mathcal A(h_0),\Omega\bigr)$. Thus, by 
\cref{prop:cartan:constFutaki}, the Futaki functional 
$\widetilde{\mathcal F}$ on $H^0(M,\End(E))$, defined 
 in this case by
\begin{equation}\label{obst6}
\widetilde{\mathcal F}(\zeta) \defeq  
2\pi \int_M \tr\Bigl(\zeta \, \Im \bigl(
e^{-\I\varphi(E)}\tilde Z(A)\bigr)\Bigr),
\end{equation}
is independent of the Hermitian metric $h_0$.
Clearly, the non-vanishing of $\widetilde{\mathcal F}$
is an obstruction to the existence of a Hermitian metric 
$h$ whose Chern connection $A$ is a $Z$-critical connection. 
By \cref{i:cartan:kempfNess:existence}, we can also obtain the Kempf--Ness functional just as
the Donaldson functional, \parencite{CollinsYau2021}.

A holomorphic vector bundle $E \to (M,\omega)$ with 
$Z_k(E) \ne 0$ for all $k \gg 0$ is said to be 
\emph{asymptotically $Z$-stable} if for all proper non-zero 
subsheaves $F \subset E$ there exists some $k_0$ such that 
$ \varphi_k(F) < \varphi_k(E)$ for all $k \ge k_0$. By 
\parencite[Lemma 2.18]{DervanMcCarthySektnan2020}, 
any asymptotically $Z$-stable 
bundle is simple. Thus, by~\eqref{average1}, 
$\widetilde{\mathcal F}$ vanishes on any asymptotically 
$Z$-stable bundle.


In the special case where \( E \) is a line bundle \( L \), 
the central charge is
\begin{equation}
Z_k(L) = \I^{n+1}\int_M e^{-\I k[\omega]} \, \chernClass(L).
\end{equation}
Moreover, in this case, a $Z$-critical connection 
is a solution of
\begin{equation}
\Im\bigl(e^{-\I\varphi}\tilde Z_k(A)\bigr) = 
\frac1{n!} \Im 
\left( e^{-\I\varphi}\left( \omega - 
\frac{F_A}{2\pi} \right) \right) = 0.
\end{equation}
This equation is called the LYZ equation after 
\parencite{LeungYauZaslow2000}, which was formulated as the mirror
to the special Lagrangian equation; its solutions 
are called the deformed Yang--Mills connections. 
This subject has been well-studied and there are
many interesting papers, but most relevant to  
our setting are \parencite{CollinsYau2021,CollinsShi2022}.
Since the endomorphism bundle of a line bundle is trivial, 
its holomorphic sections are constant functions. Thus, 
the obstruction~\eqref{obst6} vanishes. The Kempf--Ness 
functional is explicitly described using the Calabi-Yau 
functional and is studied in terms of geodesics in 
\parencite{CollinsYau2021}. 

\begin{refcontext}[sorting=nyt]{}
	\printbibliography
\end{refcontext}

\end{document}